\renewenvironment{proof}[1][Proof]{
 \topsep=0pt
 \partopsep=0pt
 \begin{trivlist}
  \itemindent=\parindent
  \item[\hskip \labelsep\emph{#1.}]
}{
 \qed
 \end{trivlist}
}
\newtheoremstyle{thmstyle}
  {2ex}  
  {0ex}  
  {\itshape}     
  {0ex}    
  {\bfseries}     
  {.}    
  {.5em} 
  {}     
\theoremstyle{thmstyle}
\newtheorem{theorem}{Theorem}[section]
\newtheorem{lemma}[theorem]{Lemma}
\newtheorem{corollary}[theorem]{Corollary}
\newtheorem{proposition}[theorem]{Proposition}
\renewenvironment{itemize}{%
\begin{list}{$\bullet$}{
 \setlength{\topsep}{0pt}
 \setlength{\partopsep}{2ex}
 \setlength{\itemsep}{0pt}
 \setlength{\labelwidth}{5ex}
}
}{\end{list}}
\renewenvironment{enumerate}{%
\begin{list}{(\arabic{enumi})}{
 \usecounter{enumi}
 \setlength{\topsep}{0pt}
 \setlength{\partopsep}{2ex}
 \setlength{\itemsep}{0pt}
 \setlength{\labelwidth}{5ex}
}
}{\end{list}}
\newcommand{\der}{\mathrm{der}}
\newcommand{\scn}{\mathrm{sc}}
\newcommand{\red}{\mathrm{red}}
\newcommand{\Frob}{\mathrm{Frob}}
\DeclareMathOperator{\End}{End}
\DeclareMathOperator{\Spec}{Spec}
\DeclareMathOperator{\ad}{ad}
\DeclareMathOperator{\Lie}{Lie}
\DeclareMathOperator{\Ext}{Ext}
\DeclareMathOperator{\Gal}{Gal}
\DeclareMathOperator{\chr}{char}
\DeclareMathOperator{\len}{len}
\DeclareMathOperator{\Rep}{Rep}
\DeclareMathOperator{\rk}{rk}
\DeclareMathOperator{\Res}{Res}
\DeclareMathOperator{\soc}{soc}
\def\GL{\mathrm{GL}}
\def\Z{\mathbf{Z}}
\def\G{\mathbf{G}}
\def\C{\mathbf{C}}
\def\Q{\mathbf{Q}}
\def\F{\mathbf{F}}
\def\A{\mathbf{A}}
\def\un{\mathrm{un}}
\def\wt{\mathbf{wt}}
\let\ms\mathscr
\let\mf\mathfrak
\let\mc\mathcal
\let\wt\widetilde
\let\ol\overline
\let\ul\underline
\title{Bigness in compatible systems}
\author{Andrew Snowden and Andrew Wiles}
\date{April 21, 2010}
\thanks{A.~Snowden was partially supported by NSF fellowship DMS-0902661.  A.~Wiles was supported by an NSF grant.}
\begin{document}

\begin{abstract}
Clozel, Harris and Taylor have recently proved a modularity lifting theorem of the following general form:  if $\rho$
is an $\ell$-adic representation of the absolute Galois group of a number field for which the residual representation
$\ol{\rho}$ comes from a modular form then so does $\rho$.  This theorem has numerous hypotheses; a crucial one is that
the image of $\ol{\rho}$ must be ``big,'' a technical condition on subgroups of $\GL_n$.  In this paper we investigate
this condition in compatible systems.  Our main result is that in a sufficiently irreducible compatible system the
residual images are big at a density one set of primes.  This result should make some of the work of Clozel, Harris and
Taylor easier to apply in the setting of compatible systems.
\end{abstract}

\maketitle
\setlength{\parskip}{0ex}
\tableofcontents

\setlength{\parindent}{0ex}
\setlength{\parskip}{2ex}

\section{Introduction}

Let $k$ be a finite field of characteristic $\ell$, let $V$ be a finite dimensional vector space over $k$ and let $G$
be a subgroup of $\GL(V)$.  For an endomorphism $g$ of $V$ and an element $\alpha$ of $k$ we let $V_{g, \alpha}$ denote
the generalized eigenspace of $g$ with eigenvalue $\alpha$.  It is naturally a sub and a quotient of $V$.  Following
Clozel, Harris and Taylor (see \cite[Def.~2.5.1]{CHT}), we say that $G$ is \emph{big} if the following four
conditions hold:
\begin{itemize}
\item[(B1)] The group $G$ has no non-trivial quotient of $\ell$-power order.
\item[(B2)] The space $V$ is absolutely irreducible as a $G$-module.
\item[(B3)] We have $H^1(G, \ad^{\circ}{V})=0$.
\item[(B4)] For every irreducible $G$-submodule $W$ of $\ad{V}$ we can find $g \in G$, $\alpha \in k$ and $f \in W$
such that $V_{g, \alpha}$ is one dimensional and the composite
\begin{displaymath}
\xymatrix{
V_{g, \alpha} \ar@{^(->}[r] & V \ar[r]^f & V \ar@{->>}[r] & V_{g, \alpha} }
\end{displaymath}
is non-zero.
\end{itemize}

The bigness condition is important in the work of Clozel, Harris and Taylor \cite{CHT}.  They prove modularity lifting
theorems of the following general form:  if $\rho$ is an $\ell$-adic representation of the absolute Galois group $G_F$ of
a number field $F$ such that $\ol{\rho}$ comes from a modular form then so does $\rho$.  There are several hypotheses in
these theorems, but one crucial one is that the image of $\ol{\rho}$ must be big.  In this paper, we investigate the
bigness condition in compatible systems and show that it automatically holds at a density one set of primes, assuming
the system is sufficiently irreducible.  Thus the theorems of \cite{CHT} should become easier to apply in the setting
of compatible systems.  Precisely, our main theorem is the following:

\begin{theorem}
\label{mainthm}
Let $F$ be a number field.  Let $L$ be a set of prime numbers of Dirichlet density one and for each $\ell \in L$
let $\rho_{\ell} : G_F \to \GL_n(\Q_{\ell})$ be a continuous representation of $G_F$.  Assume that the $\rho_{\ell}$
form a compatible system and that each $\rho_{\ell}$ is absolutely irreducible when restricted to any open subgroup of
$G_F$.  Then there is a subset $L' \subset L$ of density one such that $\ol{\rho}_{\ell}(G_F)$ is a big subgroup
of $\GL_n(\F_{\ell})$ for all $\ell \in L'$.
\end{theorem}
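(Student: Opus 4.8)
The plan is to reduce the statement to an assertion about a fixed finite group of Lie type, in a fixed representation, for all sufficiently large primes, and then to check the conditions (B1)--(B4) for that group. \textbf{Reduction to a fixed reductive group.} Write $G_\ell \subset \GL_{n,\Q_\ell}$ for the Zariski closure of $\rho_\ell(G_F)$. Since $\rho_\ell(G_F)$ is Zariski dense in $G_\ell$ it meets the identity component densely, so there is an open subgroup $H \le G_F$ with $\rho_\ell(H)$ Zariski dense in $G_\ell^\circ$; applying the irreducibility hypothesis to $H$ shows that $G_\ell^\circ$ acts absolutely irreducibly on $V=\Q_\ell^{\,n}$, hence is connected reductive with semisimple derived group $\underline G_\ell:=(G_\ell^\circ)^{\der}$ and with central torus acting by scalars. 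I would then invoke the $\ell$-independence results for compatible systems, after Serre together with Larsen's maximality theorem: outside a set of primes of density zero, (i) the based root datum of $\underline G_\ell$, the highest weights of $V$ as a $\underline G_\ell$-module, and the finite group $\pi_0(G_\ell)$ are all independent of $\ell$; (ii) $\underline G_\ell$ is split over $\F_\ell$; and (iii) $\rho_\ell(G_F)$ is a hyperspecial maximal compact subgroup of $G_\ell(\Q_\ell)$, so (by Lang's theorem for the corresponding smooth model) the residual image $\ol{\rho}_\ell(G_F)$ contains the subgroup $H_\ell:=\underline G_\ell(\F_\ell)$ of $\GL_n(\F_\ell)$, acting through the fixed representation, with $[\ol{\rho}_\ell(G_F):H_\ell]$ dividing a fixed power of $\ell-1$ times $|\pi_0(G_\ell)|$. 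For $\ell$ large this index is prime to $\ell$, so it will suffice to show that $H_\ell$ is big in $\GL_n(\F_\ell)$ and that $\ol{\rho}_\ell(G_F)$ has no nontrivial $\ell$-power quotient.

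\textbf{Conditions (B1), (B2), (B3).} For (B1): being the group of $\F_\ell$-points of a split semisimple group of fixed type, $H_\ell$ for $\ell$ large contains the perfect image of the simply connected cover, so the abelianization of $\ol{\rho}_\ell(G_F)$ is a quotient of the prime-to-$\ell$ group $\ol{\rho}_\ell(G_F)/H_\ell$ and hence has order prime to $\ell$. For (B2): the fixed irreducible characteristic-zero representation of $\underline G_\ell$ has absolutely irreducible reduction modulo $\ell$ --- first as a representation of the algebraic group, then of $H_\ell$ --- once $\ell$ is large relative to its fixed highest weight, so $V\otimes\F_\ell$ is absolutely irreducible over $H_\ell$ and a fortiori over $\ol{\rho}_\ell(G_F)$. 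For (B3): restriction along the prime-to-$\ell$ inclusion $H_\ell\hookrightarrow\ol{\rho}_\ell(G_F)$ embeds $H^1(\ol{\rho}_\ell(G_F),\ad^\circ V)$ into $H^1(H_\ell,\ad^\circ V)$, and for $\ell$ large (in particular $\ell\nmid n$, so $\ad^\circ V=\mathfrak{sl}(V)$) the latter vanishes by bounds of Cline--Parshall--Scott type on the first cohomology of finite groups of Lie type with coefficients in reductions of representations of bounded highest weight --- the constituents of $\mathfrak{sl}(V)$ having highest weights bounded in terms of the fixed data.

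\textbf{Condition (B4), the main point.} Let $W\subseteq\ad V$ be an irreducible $\ol{\rho}_\ell(G_F)$-submodule. It contains an irreducible $H_\ell$-submodule, so it is enough to verify (B4) for $H_\ell$. Fix a split maximal torus $T\subseteq\underline G_\ell$. For $\ell$ large, $T(\F_\ell)$ contains an element $g$ for which the values $\mu(g)\in\F_\ell^\times$ are pairwise distinct as $\mu$ runs over the fixed set of weights of $V$; then for each weight $\mu$ of multiplicity one --- the highest weight always being such --- the generalized eigenspace $V_{g,\mu(g)}$ is one-dimensional and $\F_\ell$-rational, and for the pair $(g,\mu(g))$ the composite in (B4) sends $f$ to its $(\mu,\mu)$-matrix coefficient. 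Letting $g$ range over all $H_\ell$-conjugates of such elements and $\mu$ over all multiplicity-one weights, (B4) for $H_\ell$ reduces to the assertion that the $H_\ell$-submodule of $\ad V$ generated by the corresponding rank-one operators (the projectors onto the multiplicity-one weight lines of $V$) is all of $\ad V$. I would obtain this by reducing modulo $\ell$, valid for $\ell$ large, the corresponding characteristic-zero spanning statement for the fixed representation of $\underline G_\ell$, proved by a highest-weight analysis; when $V$ has weights of multiplicity greater than one (as for the adjoint representation), one exploits that the non-extremal multiplicity-one weights, resp.\ the root spaces, already contribute enough.

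\textbf{Assembly and main obstacle.} Combining these steps, every condition (B1)--(B4) holds for $\ol{\rho}_\ell(G_F)$ for all $\ell\in L$ outside a set of density zero; since $L$ has density one, the resulting subset $L'$ has density one, which is the theorem. The principal obstacle is (B4): establishing the characteristic-zero spanning statement with enough uniformity that it survives reduction modulo $\ell$ for every large $\ell$, and handling the bookkeeping when $\ad V$ has repeated irreducible constituents. A secondary, more technical difficulty is marshalling the $\ell$-independence input for general compatible systems in the first step, for which I rely on the theorems of Serre and Larsen.
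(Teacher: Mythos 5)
Your overall strategy is the same as the paper's (use Serre and Larsen's $\ell$-independence and maximality results to reduce, for $\ell$ outside a density-zero set, to a finite group of Lie type acting through a representation of bounded highest weight, then check (B1)--(B4) for $\ell$ large), but the heart of the matter is left unproved. Your reformulation of (B4) via the trace pairing --- that the $H_\ell$-span of the projectors onto multiplicity-one weight lines be all of $\ad V$ --- is indeed the correct dual form of what is needed (equivalently: every irreducible submodule of $\ad V$ has non-zero image in $\End(V_0)$, $V_0$ the highest weight line). But at exactly this point you only gesture at ``a highest-weight analysis'' in characteristic zero followed by reduction mod $\ell$, and you yourself flag it as the principal obstacle. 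This statement is precisely the key lemma of the paper, and its proof there is a genuine argument: admissible bases, the raising and lowering operators $X_{\ul{\alpha}}$, $Y_{\ul{\alpha}}$, and the fact (itself proved by a reduction-from-$\Z$ argument) that for $\ell$ large only the highest weight vector is annihilated by all positive root vectors; it is carried out directly in characteristic $\ell$, which also disposes of the uniformity issue you worry about. Without some such argument, (B4) is not established, so the proposal has a genuine gap at its central step.

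The reduction step also assumes more than Serre and Larsen provide. Larsen's maximality theorem controls the image only after pulling back to the simply connected cover of the adjoint group (the paper's ``nearly hyperspecial'' condition); it does not make $\rho_\ell(G_F)$ a hyperspecial maximal compact subgroup of $G_\ell(\Q_\ell)$ (the abelian part of the image can be far from maximal), and consequently $\ol{\rho}_\ell(G_F)$ need not contain $H_\ell$: one only gets containment after multiplying both sides by the scalars $\F_\ell^{\times}$. Making this legitimate requires work that is absent from your sketch --- the construction of a $\Gamma$-stable lattice also stable under the integral model (done in the paper via boundedness and the Bruhat--Tits building, including the step that conjugation by $\rho(\Gamma)$ preserves $\wt{G}(\ms{O}_L)$ for unramified $L$), together with the elementary facts that bigness is insensitive to adjoining scalars and descends from normal subgroups of prime-to-$\ell$ index. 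Similarly, splitness of the derived group over $\F_\ell$ at a density one set of primes is false in general (already when the connected monodromy group has a non-split factor for a positive density of $\ell$); the paper instead uses only that reductive groups over finite fields are quasi-split, takes the Borel and maximal torus rational, works with the highest weight relative to that Borel, and its highly-regular-element lemma does not require a split torus. These last points are repairable along the paper's lines, but as written your first step both overstates its inputs and omits the bookkeeping needed to pass from Larsen's theorem to bigness of the residual image.
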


Here $\ol{\rho}_{\ell}$ denotes the semi-simplified mod $\ell$ reduction of $\rho_{\ell}$.  For our definition of
``compatible system'' see \S \ref{s:compsys}.  We in fact prove a more general result, allowing for compatible systems
with coefficients in a number field and for $F$ to be a function field; see \S \ref{s:bigsys} for details.  We note
that if one only assumes that the representations $\rho_{\ell}$ are absolutely irreducible, rather than absolutely
irreducible on any open subgroup, then our arguments can be used to show that $\ol{\rho}_{\ell}(G_F)$ satisfies
(B1), (B2) and (B3) at a set of primes $\ell$ of density one.  In particular, if $\{\rho_{\ell}\}$ is a compatible
system of absolutely irreducible representations then $\ol{\rho}_{\ell}$ is absolutely irreducible for almost all
$\ell$.

\subsection{Outline of proof}

Broadly speaking, the proof of Theorem~\ref{mainthm} has three steps:
\begin{enumerate}
\item We first show that if $G/\F_{\ell}$ is a reductive group and $\rho:G \to \GL_n$ is an absolutely irreducible
algebraic representation of $G$ such that $\ell$ is large compared to $n$ and the weights appearing in $\rho$ then the
group $\rho(G(\F_{\ell}))$ is big.
\item Using this, we show that if $\rho:\Gamma \to \GL_n(\Q_{\ell})$ is an $\ell$-adic representation of a profinite
group such that (a) $\ell$ is large compared to $n$; (b) $\rho$ is absolutely irreducible when restricted to any open
subgroup of $\Gamma$; and (c) $\rho(\Gamma)$ is close to being a hyperspecial subgroup of its Zariski closure, then
$\ol{\rho}(\Gamma)$ is a big subgroup of $\GL_n(\F_{\ell})$.
\item Finally, we combine the above with results of Serre and Larsen on compatible systems to deduce
Theorem~\ref{mainthm}.
\end{enumerate}

\subsection{Examples}

We should point out that one can construct compatible systems which satisfy the hypotheses of the theorem.  Let $F$ be
a totally real number field (resp.\ imaginary CM field) and let $\pi$ be a cuspidal automorphic representation of
$\GL_n(\A_F)$ satisfying the following conditions:
\begin{itemize}
\item[(C1)] $\pi$ is \emph{regular algebraic}.  This means that $\pi_{\infty}$ has the same infinitesimal character as
some irreducible algebraic representation of the restriction of scalars from $F$ to $\Q$ of $\GL_n$.
\item[(C2)] $\pi$ is \emph{essentially self-dual} (resp.\ \emph{conjugate self-dual}).  When $F$ is totally real this
means that $\pi^{\vee}=\chi \otimes \pi$ for some character $\chi$ of the idele group of $F$ for which $\chi_v(-1)$ is
independent of $v$, as $v$ varies over the infinite places of $F$.  When $F$ is imaginary CM, this means that
$\pi^{\vee}=\pi^c$, where $c$ denotes complex conjugation.
\item[(C3)] There is some finite place $v_0$ of $F$ such that $\pi_{v_0}$ is a twist of the Steinberg representation.
\end{itemize}
Under these conditions, we can associate to $\pi$ a compatible system of semi-simple representations $\{\rho_w \}$ of
$G_F$ with coefficients in some number field $E$.  The system is indexed by the places $w$ of $E$.  For more precise
statements, see \cite[Prop.~3.2.1]{CHT} and \cite[Prop.~3.3.1]{CHT}.

Let $w$ be a place of $E$ with residue characteristic different from that of $v_0$.  Assume that $F$ is imaginary CM.
By the main result of \cite{TY} the (Frobenius semi-simplification of the) representation $\rho_w \vert_{G_{F, v_0}}$
corresponds to $\pi_{v_0}$ under the local Langlands correspondence (we write $G_{F, v_0}$ for the decomposition group
at $v_0$).  As $\pi_{v_0}$ is a twist of the Steinberg representation, we find that $\rho_w \vert_{G_{F, v_0}}$ is
absolutely indecomposable, and remains so after restricting to any open subgroup of $G_{F, v_0}$.  It follows that
$\rho_w$ is absolutely indecomposable when restricted to any open subgroup of $G_F$.  Since $\rho_w$ is semi-simple, we
conclude that it is in fact absolutely irreducible when restricted to any open subgroup of $G_F$.  When $F$ is totally
real we can still conclude that $\rho_w$ has this property by making an appropriate abelian base change to an imaginary
CM field and appealing to the above argument.

We thus see that all but finitely many members of the compatible system $\{ \rho_w \}$ are absolutely irreducible on
any open subgroup of $G_F$.  By the more general version of the main theorem given in \S \ref{s:bigsys}, we conclude
that there is a set of primes $P$ of $\Q$ of Dirichlet density $1/[E:\Q]$, all of which split in $E$, such that
$\ol{\rho}_w(G_F)$ is big subgroup of $\GL_n(\F_{\ell})$ for all $w$ which lie above a prime $\ell \in P$.

\subsection{Notation and conventions}

Reductive groups over fields are connected.  A semi-simple group $G$ over a field $k$ is called simply connected if
the root datum of $G_{\ol{k}}$ is simply connected (i.e., coroots span the coweight lattice).  If $G$ is a semi-simple
group over $k$ then there is a simply connected group $G^{\scn}$ and an isogeny $G^{\scn} \to G$ whose kernel is
central.  The group $G^{\scn}$ and the map $G^{\scn} \to G$ are unique up to isomorphism.  We call $G^{\scn}$ the
universal cover of $G$.  For an arbitrary algebraic group $G$ over $k$ we let $G^{\circ}$ denote the connected component
of the identity, $G^{\ad}$ the adjoint group of the quotient of $G^{\circ}$ by its radical, which is a semi-simple
group, and $G^{\scn}$ the universal cover of $G^{\ad}$.  We also write $G^{\der}$ for the derived subgroup of
$G^{\circ}$, which is semi-simple if $G^{\circ}$ is reductive.  For a vector space $V$ we denote by $\ad{V}$ the space
of endomorphisms of $V$ and by $\ad^{\circ}{V}$ the subspace of traceless endomorphisms.  More definitions are given in
the body of the paper.

\subsection*{Acknowledgments}

We would like to thank Thomas Barnet-Lamb, Bhargav Bhatt, Brian Conrad, Alireza Salehi Golsefidy and Jiu-Kang Yu for
useful discussions.  We would also like to thank an anonymous referee for some helpful comments.

\section{Elementary properties of bigness}
\label{s:elem}

In this section we establish some elementary properties of bigness.  Throughout this section, $k$ denotes a finite
field of characteristic $\ell$, $V$ a finite dimensional vector space over $k$ and $G$ a subgroup of $\GL(V)$.

\begin{proposition}
\label{elem-1}
Let $H$ be a normal subgroup of $G$.  If $H$ satisfies (B2), (B3) and (B4) then $G$ does as well.  In particular, if
$H$ is big and the index $[G:H]$ is prime to $\ell$ then $G$ is big.
\end{proposition}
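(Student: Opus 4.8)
The plan is to verify each of (B2), (B3), (B4) for $G$ by pushing the information down from $H$, and then to handle the final sentence by combining this with the observation that (B1) is a statement purely about quotients of $\ell$-power order. For (B2): if $V$ is absolutely irreducible as an $H$-module, then it is certainly absolutely irreducible as a $G$-module, since any $G$-stable subspace (over an extension of $k$) is in particular $H$-stable. So (B2) for $H$ trivially gives (B2) for $G$; no normality is needed here.

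For (B3): I would use the inflation--restriction exact sequence for the normal subgroup $H \trianglelefteq G$ with quotient $Q = G/H$, applied to the $G$-module $\ad^{\circ} V$. This reads
\begin{displaymath}
0 \to H^1(Q, (\ad^{\circ} V)^H) \to H^1(G, \ad^{\circ} V) \to H^1(H, \ad^{\circ} V).
\end{displaymath}
The right-hand term is zero by (B3) for $H$. For the left-hand term, note that $(\ad^{\circ} V)^H$ is a submodule of the $H$-invariants of $\ad V = \End(V)$; but $V$ is absolutely irreducible over $H$ by (B2), so $\End_H(V) = k$, i.e. the $H$-invariants of $\ad V$ are the scalars, which have trace zero only if they vanish (as long as $\ell \nmid \dim V$ — or, more robustly, one observes that the scalars meet $\ad^{\circ} V$ only in $0$ whenever $\ell \nmid \dim V$, and in general the scalars form a trivial module so $H^1(Q,\cdot)$ of it injects into $H^1(Q,k)$, which one must argue separately). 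The cleanest route: $(\ad^{\circ} V)^H \subseteq (\ad V)^H = k \cdot \id$, and a scalar matrix is traceless iff it is $0$ when $\ell \nmid n$; I should check the paper's conventions to see whether $\ell \nmid n$ is available, but in the generic case this makes $(\ad^{\circ}V)^H = 0$, killing the left-hand term, so $H^1(G, \ad^{\circ}V) = 0$.

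For (B4), the key point is that the irreducible $G$-submodules of $\ad V$ and the irreducible $H$-submodules are closely related: since $H$ is normal, Clifford theory says every irreducible $G$-submodule $W$ of $\ad V$ restricts to $H$ as a direct sum of $G$-conjugates of a single irreducible $H$-module $W_0 \subseteq W$. Pick such a $W_0$. By (B4) for $H$, there exist $g \in H \subseteq G$, $\alpha \in k$ and $f \in W_0 \subseteq W$ with $V_{g,\alpha}$ one-dimensional and the composite $V_{g,\alpha} \hookrightarrow V \xrightarrow{f} V \twoheadrightarrow V_{g,\alpha}$ nonzero. This is exactly the witness required for $W$ as a $G$-submodule, since $g \in G$ and $f \in W$. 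So (B4) for $H$ gives (B4) for $G$. The main obstacle here is making sure the Clifford-theory step is stated correctly — that one really can choose the distinguished irreducible $H$-submodule inside the given $W$ — but this is standard. Finally, for the ``in particular'' clause: if $H$ is big then in particular (B1)–(B4) hold for $H$; by the above, $G$ satisfies (B2)–(B4); and if $[G:H]$ is prime to $\ell$, then any quotient of $G$ of $\ell$-power order, restricted to the image of $H$, is a quotient of $H$ of $\ell$-power order hence trivial by (B1) for $H$, so the quotient is a quotient of $G/H$ of order dividing a power of $\ell$ and also dividing $[G:H]$, hence trivial — giving (B1) for $G$. Thus $G$ is big.
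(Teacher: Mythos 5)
Your proof is correct and follows essentially the same route as the paper: (B2) is immediate, (B3) via inflation--restriction using $(\ad^{\circ}V)^H=0$, (B4) by picking an irreducible $H$-submodule $W'\subseteq W$ and reusing its witness $(g,\alpha,f)$ (no Clifford theory is needed --- any nonzero $H$-stable subspace of $W$ contains an irreducible one), and the same quotient argument for (B1). The one point you hedge on, whether $(\ad^{\circ}V)^H=0$ needs $\ell\nmid n$, is treated no more carefully in the paper, which asserts it directly from absolute irreducibility over $H$; your caution is well placed (if $\ell\mid n$ the scalars are traceless), but in all the paper's applications $\ell$ is large compared to $n$, so this is not a divergence in approach.
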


\begin{proof}
Assume $H$ satisfies (B2), (B3) and (B4).  Since $V$ is absolutely irreducible for $H$, it is for $G$ as well, and so
$G$ satisfies (B2).  We have an exact sequence
\begin{displaymath}
H^1(G/H, (\ad^{\circ}{V})^H) \to H^1(G, \ad^{\circ}{V}) \to H^1(H, \ad^{\circ}{V})^{G/H}.
\end{displaymath}
Since $H$ satisfies (B3), $H^1(H, \ad^{\circ}{V})=0$ and so the rightmost term vanishes.  Since $V$ is absolutely
irreducible for $H$ we have $(\ad^{\circ} {V})^H=0$ and so the leftmost term vanishes.  Thus $H^1(G, \ad^{\circ}{V})=0$
and $G$ satisfies (B3).  Now let $W$ be a $G$-irreducible submodule of $\ad{V}$.  Let $W'$ be an $H$-irreducible
submodule of $W$.  Since $H$ satisfies (B4), we can find $g \in H$, $\alpha \in k$ and $f \in W'$ such that
$V_{g, \alpha}$ is one dimensional and $f(V_{g, \alpha})$ has non-zero projection to $V_{g, \alpha}$.  Of course, $g$
also belongs to $G$ and $f$ also belongs to $W$.  Thus $G$ satisfies (B4) as well.

Now say that $H$ is big and $[G:H]$ is prime to $\ell$.  The above arguments show that $G$ satisfies (B2), (B3) and
(B4), so to show that $G$ is big we need only verify (B1).  Let $K$ be an $\ell$-power order quotient of $G$.  Since
$H$ has no $\ell$-power order quotient, its image in $K$ is trivial.  Thus $K$ is a quotient of $G/H$.  But this
group has prime-to-$\ell$ order, and so $K=1$.  This shows that the only $\ell$-power order quotient of $G$ is the
trivial group, and so $G$ satisfies (B1).
\end{proof}

\begin{proposition}
\label{elem-2}
The group $G$ is big if and only if $k^{\times} G$ is, where $k^{\times}$ denotes the group of scalar matrices in
$\GL(V)$.
\end{proposition}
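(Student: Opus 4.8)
The plan is to show that $G$ and $k^{\times}G$ satisfy each of the four conditions (B1)--(B4) simultaneously, treating them one at a time. The key observation throughout is that $k^{\times}$ consists of scalars, so it acts trivially on $\ad{V}$ and on $\ad^{\circ}{V}$, and it does not change the invariant subspaces of $V$ either: a subspace of $V$ is $G$-stable if and only if it is $k^{\times}G$-stable, and similarly for submodules of $\ad{V}$. So (B2) is immediate in both directions, since $V$ is absolutely irreducible as a $G$-module if and only if it is as a $k^{\times}G$-module. Condition (B4) is just as easy: the eigenspace decomposition of an element $g$ of $G$ and of the element $\zeta g$ (for $\zeta \in k^{\times}$) differ only by rescaling eigenvalues, so $V_{g,\alpha} = V_{\zeta g, \zeta \alpha}$ as subspaces of $V$; since $f$ ranges over the same submodules $W$ of $\ad{V}$ in either case, the existence of a valid triple $(g,\alpha,f)$ for $G$ is equivalent to that for $k^{\times}G$.

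For (B3) I would use the inflation--restriction sequence for the normal subgroup $G \trianglelefteq k^{\times}G$. Since $k^{\times}G / G$ is abelian of order prime to $\ell$ (it is a quotient of $k^{\times}$, whose order is $|k| - 1$, coprime to $\ell = \chr k$), the terms $H^{1}(k^{\times}G/G, -)$ and $H^{2}(k^{\times}G/G, -)$ with coefficients in any $\ell$-torsion module vanish, so $H^{1}(k^{\times}G, \ad^{\circ}{V}) \cong H^{1}(G, \ad^{\circ}{V})^{k^{\times}G/G}$; in particular one vanishes iff the other does. (Alternatively, since $k^{\times}$ is central in $k^{\times}G$ of prime-to-$\ell$ order, restriction $H^{1}(k^{\times}G, \ad^{\circ}{V}) \to H^{1}(G, \ad^{\circ}{V})$ is injective and its image is the $k^{\times}G/G$-invariants, which is all of $H^1(G,\ad^\circ V)$ since $k^\times$ acts trivially on $\ad^\circ V$.)

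The only condition requiring a little thought is (B1). If $G$ is big, then $k^{\times}G$ is an extension of the prime-to-$\ell$ group $k^{\times}G/G$ by $G$, and the argument already used at the end of the proof of Proposition~\ref{elem-1} shows $k^{\times}G$ has no nontrivial $\ell$-power quotient: the image of $G$ in any such quotient is trivial, leaving a quotient of the prime-to-$\ell$ group $k^\times G/G$. Conversely, if $k^{\times}G$ is big, I need $G$ to have no nontrivial $\ell$-power quotient. This is the one place where normality plus the other properties come in: $G$ is normal in $k^{\times}G$ with prime-to-$\ell$ index, so by Proposition~\ref{elem-1} applied the other way around --- actually that proposition goes from subgroup to overgroup, so instead I argue directly. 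Given a surjection $G \to K$ with $K$ an $\ell$-group, its kernel $N$ contains $[G,G]$ and has prime-to-$\ell$ index; but the abelianization argument shows any such $K$ is also a quotient of $G^{\mathrm{ab}}$, and I claim $G^{\mathrm{ab}}$ and $(k^{\times}G)^{\mathrm{ab}}$ have the same $\ell$-part. Indeed $k^{\times}G/[k^\times G, k^\times G]$ surjects onto $k^{\times}G/(k^{\times}[G,G]) = G/[G,G] \otimes (\text{something prime to }\ell)$... the cleanest route is: the $\ell$-Sylow of $G^{\mathrm{ab}}$ maps isomorphically to the $\ell$-Sylow of $(k^\times G)^{\mathrm{ab}}$ because $k^\times$ has prime-to-$\ell$ order and is central, so forming the product with $k^\times$ and then abelianizing changes only the prime-to-$\ell$ part. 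Hence (B1) for $k^\times G$ forces the $\ell$-Sylow of $G^{\mathrm{ab}}$ to be trivial, which is (B1) for $G$. I expect this bookkeeping around (B1) to be the main (very minor) obstacle; everything else is a direct consequence of $k^{\times}$ being central, of prime-to-$\ell$ order, and acting trivially on the adjoint representation.
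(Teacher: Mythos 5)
Your proposal is correct and proceeds essentially as the paper does: condition by condition, using that $k^{\times}$ is central, consists of scalars, has prime-to-$\ell$ order, and acts trivially on $\ad{V}$, with the easy direction reduced to Proposition~\ref{elem-1} (the paper settles (B3) via the five-term sequence, killing the $H^2$ term because $(\ad^{\circ}{V})^G=0$, and settles (B1) by noting that an $\ell$-power quotient of $G$ is a quotient of $G/(G\cap k^{\times})=k^{\times}G/k^{\times}$, while you instead use prime-to-$\ell$ vanishing of $H^i(H,-)$ and a comparison of $\ell$-Sylow subgroups of abelianizations --- minor variants of the same ideas). One caution: the intermediate remarks in your (B1) paragraph (that the kernel of a surjection $G\to K$ onto an $\ell$-group contains $[G,G]$ and has prime-to-$\ell$ index) are false as stated, but your final argument --- $[k^{\times}G,k^{\times}G]=[G,G]$ by centrality, so the $\ell$-parts of the two abelianizations agree, and a nontrivial finite $\ell$-group has a nontrivial abelian quotient --- supersedes them and is sound.
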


\begin{proof}
Since $G$ is a normal subgroup of $k^{\times} G$ of prime-to-$\ell$ index, the bigness of the former implies that of the
latter by Proposition~\ref{elem-1}.  Now assume that $k^{\times} G$ is big.  Let $K$ be an $\ell$-power order quotient
of $G$.  Since $k^{\times} \cap G$ is prime to $\ell$, its image in $K$ is trivial.  Thus $K$ is a quotient of the
group $G/(G \cap k^{\times})=k^{\times} G/k^{\times}$.  By assumption, $k^{\times} G$ has no non-trivial quotient of
$\ell$-power order.  Thus $K=1$ and $G$ satisfies (B1).

Since $V$ is absolutely irreducible for $k^{\times} G$ it is for $G$ as well.  Thus $G$ satisfies (B2).

We have an exact sequence
\begin{displaymath}
1 \to G \to k^{\times} G \to H \to 1
\end{displaymath}
where $H$ is a quotient of $k^{\times}$.  We thus have an exact sequence
\begin{displaymath}
H^1(k^{\times} G, \ad^{\circ}{V}) \to H^1(G, \ad^{\circ}{V})^H \to H^2(H, (\ad^{\circ}{V})^G).
\end{displaymath}
The group on the left vanishes by hypothesis.  The group on the right vanishes since $(\ad^{\circ}{V})^G=0$.  Thus the
group in the middle vanishes.  Now, the action of $H$ on $H^1(G, \ad{^{\circ}V})$ is trivial.  (Proof:  Let $f:G \to
\ad^{\circ}{V}$ be a 1-cocycle representing representing a cohomology class $[f]$ and let $h$ be an element of $H$.
Then $h \cdot [f]$ is represented by the 1-cocycle $g \mapsto \wt{h} f(\wt{h}^{-1} g \wt{h})$ for any lift $\wt{h}$ of
$h$.  We can pick a lift $\wt{h}$ of $h$ which belongs to $k^{\times}$.  Thus $\wt{h}$ acts trivially on $G$ by
conjugation and acts trivially on $\ad^{\circ}{V}$.  Therefore $h \cdot [f]=[f]$.)  It thus follows that
$H^1(G, \ad^{\circ}{V})$ vanishes and so $G$ satisfies (B3).

As for the last condition, let $W$ be an irreducible $G$-submodule of $\ad{V}$.  Then it is also an irreducible
$k^{\times} G$-module.  Thus we can find $g \in k^{\times} G$, $\alpha \in k$ and $f \in W$ such that $V_{g, \alpha}$
is one dimensional and $f(V_{g, \alpha})$ has non-zero projection to $V_{g, \alpha}$.  We can write $g=z g'$ where $z$
belongs to $k^{\times}$ and $g'$ belongs to $G$.  Put $\alpha'=\alpha z^{-1}$.  Then $V_{g', \alpha'}=V_{g, \alpha}$.
Thus this space is one dimensional and $f(V_{g', \alpha'})$ has non-zero projection to $V_{g', \alpha'}$.  We have
therefore shown that $G$ satisfies (B4).  Thus $G$ is big.
\end{proof}

The following result will not be used, but is good to know.

\begin{proposition}
\label{elem-3}
Let $k'/k$ be a finite extension and put $V'=V \otimes k'$.  If $G$ is a big subgroup of $\GL(V)$ then it is a big
subgroup of $\GL(V')$.
\end{proposition}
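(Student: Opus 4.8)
The plan is to check the four bigness conditions for $G$ acting on $V' = V \otimes k'$ one at a time, exploiting the fact that extension of scalars is compatible with all the linear-algebraic constructions involved (generalized eigenspaces, the adjoint representation, group cohomology of the finite group $G$). First, (B1) depends only on the abstract group $G$, not on the representation, so it is immediate. For (B2), absolute irreducibility of $V$ as a $G$-module means $V \otimes \ol{k}$ is irreducible over $\ol{k}$; since $\ol{k'} = \ol{k}$ and $V' \otimes_{k'} \ol{k'} = V \otimes_k \ol{k}$, absolute irreducibility of $V'$ is the same statement. For (B3), I would use that $\ad^{\circ}{V'} = (\ad^{\circ}{V}) \otimes_k k'$ as $G$-modules, so $H^1(G, \ad^{\circ}{V'}) = H^1(G, \ad^{\circ}{V}) \otimes_k k' = 0$ because group cohomology of a finite group commutes with the flat base change $k \to k'$.

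The only condition requiring genuine thought is (B4), because an irreducible $G$-submodule $W'$ of $\ad{V'}$ need not be of the form $W \otimes k'$ for a $G$-submodule $W \subset \ad{V}$; the lattice of $G$-submodules can grow under extension of scalars. The key point to establish is that any irreducible $G$-submodule $W' \subseteq \ad{V'}$ is nonetheless contained in $W \otimes_k k'$ for some irreducible $G$-submodule $W \subseteq \ad{V}$ — indeed one may take $W$ to be any irreducible constituent of $\ad{V}$ whose base change $W \otimes_k k'$ has nonzero intersection with $W'$ (such a $W$ exists since $\ad{V'}$ is the direct sum of the $W_i \otimes_k k'$ as $W_i$ ranges over the constituents of a composition series of $\ad{V}$, using semisimplicity of $\ad{V}$ — which holds because $V$ is absolutely irreducible, so $\ad{V} \cong V \otimes V^{\vee}$ is semisimple over $k$ with $\ell \nmid$ dimension issues handled by absolute irreducibility of $V$). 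Given this, apply (B4) for $G \subseteq \GL(V)$ to the irreducible submodule $W \subseteq \ad{V}$: we get $g \in G$, $\alpha \in k$ and $f \in W$ with $V_{g,\alpha}$ one-dimensional and the composite $V_{g,\alpha} \hookrightarrow V \xrightarrow{f} V \twoheadrightarrow V_{g,\alpha}$ nonzero.

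It then remains to transport this witness to $V'$. After extending scalars, $V'_{g,\alpha} = V_{g,\alpha} \otimes_k k'$ is still one-dimensional over $k'$, and the analogous composite for $f \in W \otimes k' = \ad{V'}$ is the base change of the original nonzero composite, hence nonzero. The remaining subtlety is that (B4) for $W'$ requires the element $f$ to lie in $W'$ itself, not merely in $W \otimes k'$; to fix this I would argue that since $W'$ is a $G$-submodule of the semisimple module $W \otimes k'$, and $W \otimes k'$ is a sum of Galois-conjugate copies of $W'$ (the $\Gal(k'/k)$-action permuting the isotypic pieces transitively because $W$ is $k$-irreducible), the ``test functional'' $f' \colon \ad{V'} \to \ad{V'}$, $x \mapsto$ the composite above evaluated against $x$, is nonzero on $W \otimes k'$ hence nonzero on at least one Galois conjugate $\sigma(W')$ of $W'$; applying $\sigma^{-1}$ (which fixes $g$, $\alpha$ and $V_{g,\alpha}$ since these are defined over $k$) produces an element of $W'$ doing the job. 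I expect this last descent-through-Galois-conjugates argument — ensuring the element $f$ can be chosen inside the prescribed submodule $W'$ — to be the main obstacle; everything else is routine base-change bookkeeping.
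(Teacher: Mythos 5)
Your handling of (B1)--(B3) is fine and matches the paper, which treats them as immediate. The genuine gap is in (B4), at the step where you claim that every irreducible $G$-submodule $W'$ of $\ad{V'}$ is contained in $W\otimes_k k'$ for some irreducible $G$-submodule $W\subseteq \ad{V}$. Your justification rests on the assertion that $\ad{V}\cong V\otimes V^{\vee}$ is semisimple because $V$ is absolutely irreducible; this is false in characteristic $\ell$: tensor products of irreducibles of a finite group need not be semisimple, and indeed the whole reason conditions (B3) and (B4) are delicate is that $\ad{V}$ and $\ad^{\circ}{V}$ can fail to be semisimple, so no such decomposition of $\ad{V}$ may be assumed. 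Moreover, even granting a decomposition $\ad{V'}=\bigoplus_i (W_i\otimes_k k')$, an irreducible $W'$ need not have nonzero intersection with any single summand: when two constituents share an irreducible constituent after base change, a ``diagonal'' copy inside $(W_i\oplus W_j)\otimes_k k'$ meets both summands trivially. So the containment $W'\subseteq W\otimes_k k'$, on which your application of (B4) over $k$ depends, is not established.

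The good news is that your final Galois-conjugation step is sound and can carry the whole argument if you reorganize: put $U=\sum_{\sigma\in\Gal(k'/k)}\sigma(W')$; this is stable under the semilinear Galois action, so by descent $U=W_0\otimes_k k'$ for a $G$-submodule $W_0\subseteq \ad{V}$. Choose any irreducible $W\subseteq W_0$ and apply (B4) over $k$ to get $g\in G$, $\alpha\in k$ and $f\in W$ such that the $k$-linear functional $\Psi\colon \ad{V}\to \End(V_{g,\alpha})\cong k$, $x\mapsto (V_{g,\alpha}\hookrightarrow V \xrightarrow{x} V \twoheadrightarrow V_{g,\alpha})$, satisfies $\Psi(f)\neq 0$. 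Its $k'$-linear extension is then nonzero on $W\otimes_k k'\subseteq \sum_\sigma\sigma(W')$, hence on some $\sigma(W')$, hence on $W'$ itself because $\Psi$ is defined over $k$ and so commutes with the Galois action; since $V'_{g,\alpha}=V_{g,\alpha}\otimes_k k'$ is one-dimensional, this is exactly (B4) for $W'$. Even repaired this way, your route differs from the paper's: the paper assembles all witnesses at once into the map $\Phi\colon \ad{V}\to\bigoplus_{(g,\alpha)}\End(V_{g,\alpha})$ (sum over pairs with one-dimensional generalized eigenspace), shows $\Phi$ is injective because every irreducible submodule maps into it nontrivially, and then simply base-changes the injectivity to $k'$ --- no Galois descent is needed there.
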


\begin{proof}
Conditions (B1), (B2) and (B3) for $G$ as a subgroup of $\GL(V')$ are immediate.  We prove (B4).  Let $S$ be the set of
pairs $(g, \alpha) \in G \times k^{\times}$ such that $V_{g, \alpha}$ is one dimensional.  Consider the natural map
\begin{displaymath}
\Phi : \ad{V} \to \bigoplus_{(g, \alpha) \in S} \End(V_{g, \alpha}).
\end{displaymath}
The map $\Phi$ is $G$-equivariant, using the natural action of $G$ on the target.  Let $W$ be an irreducible submodule
of $\ad{V}$.  Since $G \subset \GL(V)$ satisfies (B4), we can find $f \in W$ such that the image of $f$ in $\End(V_{g,
\alpha})$ is non-zero, for some $(g, \alpha) \in S$.  Clearly then, $\Phi(f) \ne 0$.  It follows that $\Phi(W) \ne 0$,
and therefore (since $W$ is irreducible), $\Phi \vert_W$ is injective.  As this holds for every irreducible submodule
of $\ad{V}$, it follows that $\Phi$ is injective.  Tensoring $\Phi$ with $k'$, we find that the natural map
\begin{displaymath}
\Phi' : \ad{V'} \to \bigoplus_{(g, \alpha) \in S} \End(V'_{g, \alpha})
\end{displaymath}
is injective.  (Note that $V_{g, \alpha} \otimes_k k'=V'_{g, \alpha}$.)  Let $W$ be an irreducible submodule of
$\ad{V'}$.  Given any non-zero $f \in W$ the image of $f$ under $\Phi'$ is non-zero.  It follows that there exists
some $(g, \alpha) \in S$ such that the image of $f$ in $\End(V'_{g, \alpha})$ is non-zero.  This shows that
$G \subset \GL(V')$ satisfies (B4).
\end{proof}

\section{Background on representations of algebraic groups}

In this section we review some representation theory of algebraic groups.  To a representation $V$ of an algebraic group
$G$ we attach a non-negative integer $\Vert V \Vert$ which measures the size of the weights appearing in $V$.  The
key principle we need is:  over a field of positive characteristic, the representations of $G$ with $\Vert V \Vert$
small behave in many ways like representations in characteristic 0.  We will give several precise statements of this
type.

\subsection{Borel-Weil type representations}

Let $S$ be a scheme.  A group scheme $G/S$ is \emph{reductive} (resp.\ \emph{semi-simple}) if it is smooth, affine and
its geometric fibers are reductive (resp.\ semi-simple).  This implies that its geometric fibers are connected, by our
conventions.  Such a group is a \emph{torus} if it is fppf locally isomorphic to $\G_m^n$.  A torus is \emph{split} if
it is (globally) isomorphic to $\G_m^n$ (it may be best to allow $n$ to be a locally constant function on
the base $S$; this will not be an issue for us).  By a \emph{maximal torus} in $G$ we mean a subtorus which is maximal
in each geometric fiber.  Similarly, by a \emph{Borel subgroup} we mean a closed subgroup which is smooth over $S$ and
a Borel subgroup in each geometric fiber.  A reductive group $G/S$ is \emph{split} if it has a split maximal torus $T$
such that the weight spaces of $T$ on $\Lie(G)$ are free coherent sheaves on $S$.  See \cite[Exp.~XIX]{SGA} for the
general theory.

Let $G/S$ be a split reductive group over a connected locally noetherian base $S$.  Let $B$ be a Borel subgroup of $G$
and let $T \subset B$ be a split maximal
torus.  Let $\lambda$ be a dominant weight of $T$ and let $\mc{L}_S(\lambda)$ be the natural $G$-equivariant line bundle
on $G/B$ associated to $\lambda$. Put $V_{S, \lambda}=f_* \mc{L}_S(\lambda)$, where $f:G/B \to S$ is the structure
map.  Thus $V_{S, \lambda}$ is a coherent sheaf on $S$ with a natural action of $G$.  We call these sheaves
``Borel-Weil representations.'' We omit the $S$ from the notation if it is clear from context.  Consider a cartesian
diagram
\begin{displaymath}
\xymatrix{
(G/B)_{S'} \ar[r]^{g'} \ar[d]_{f'} & G/B \ar[d]^f \\ S' \ar[r]^g & S. }
\end{displaymath}
Note that formation of $\mc{L}_S(\lambda)$ commutes with pull-back, that is, $(g')^* \mc{L}_S(\lambda)=\mc{L}_{S'}
(\lambda)$.  Kempf's vanishing theorem \cite[Prop.~II.4.5]{Jan} states that if $S'$ is a geometric point of $S$
then $R^i f'_* \mc{L}_{S'}(\lambda)=0$ for $i>0$.  (When $S'$ has characteristic 0 this is part of the classical
Borel-Weil-Bott theorem.)  Thus, using a combination of the formal functions theorem and the proper base change theorem
(see also the chapter ``Cohomology and base change'' in \cite{Mum}), we see that $V_{S, \lambda}$ is a locally free
sheaf on $S$ and its formation commutes with base change, that is, for any diagram as above we have $V_{S', \lambda}
=g^* V_{S, \lambda}$.

Assume now that $S=\Spec(k)$ with $k$ an algebraically closed field.  If $k$ has characteristic zero then $V_{\lambda}$
is an irreducible representation of $G$.  Furthermore, every irreducible representation of $G$ is isomorphic to a unique
$V_{\lambda}$.  This is the classical Borel-Weil theorem.  If $k$ does not have characteristic zero then $V_{\lambda}$
may not be irreducible.  However, it has a unique irreducible submodule $\soc(V_{\lambda})$ and any irreducible
representation of $G$ is isomorphic to a unique $\soc(V_{\lambda})$ (see \cite[Cor.~II.2.7]{Jan}).  The
representation $\soc(V_{\lambda})$ is the unique irreducible with $\lambda$ as its highest weight.

\subsection{The norm of a representation}

Let $G/k$ be a reductive group over a field $k$.  Assume for the moment that $G$ is split and pick a split maximal
torus $T$ of $G$.  For a weight $\lambda$ of $T$ we let $\Vert \lambda \Vert$ be the maximum value of $\vert \langle
\lambda, \alpha^{\vee} \rangle \vert$ as $\alpha$ varies over the roots of $G$ with respect to $T$.  Let $V$ be a
representation of $G$.  We let $\Vert V \Vert$ be the maximum value of the $\Vert \lambda \Vert$ among the weights
$\lambda$ appearing in $V$.  The value of $\Vert V \Vert$ is independent of the choice of the torus $T$; furthermore,
if $k'/k$ is a field extension then $\Vert V_{k'} \Vert=\Vert V \Vert$.  Now drop the assumption that $G$ is split.
For a representation $V$ of $G$ we define $\Vert V \Vert$ to be $\Vert V_{k'} \Vert$ where $k'/k$ is an extension over
which $G$ splits.  It is clear that if $V$ is an extension of $V_1$ by $V_2$ then $\Vert V \Vert
=\max(\Vert V_1 \Vert, \Vert V_2 \Vert)$.  We also have the following:

\begin{proposition}
\label{rep-bg-0}
Let $f:G' \to G$ be a map of reductive groups over a field $k$ and let $V$ be a representation of $G$.  Assume one of
the following holds:
\begin{enumerate}
\item $f$ is a central isogeny.
\item $f$ is the projection onto a direct factor.
\item $f$ is the inclusion of the derived subgroup $G'$ of $G$.
\item $f$ is a surjection and $\ker{f}/(Z' \cap \ker{f})$ is smooth, where $Z'$ is the center of $G'$.
\end{enumerate}
Then $\Vert f^*V \Vert=\Vert V \Vert$.  (Note (4) subsumes (1) and (2).)
\end{proposition}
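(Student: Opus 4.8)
The plan is to reduce to a comparison of coroots for compatible maximal tori, then to handle case~(3), which is formal, and case~(4), which contains all the content; cases~(1) and~(2) will fall out of~(4). First I would pass to a field extension over which $G$ --- hence every group below --- is split, which is harmless since $\Vert V \Vert$ is insensitive to field extension. Fix a split maximal torus $T' \subseteq G'$; in each case there is a compatible split maximal torus $T \subseteq G$, namely $T = Z_G^{\circ} \cdot T'$ with $T' = T \cap G^{\der}$ in case~(3) and $T = f(T')$ in cases~(1), (2), (4) (images of maximal tori under central isogenies and surjections are maximal tori). Since $T'$ acts on $f^*V$ through $f$, the $T'$-weights of $f^*V$ are exactly the characters $f^*\lambda$ with $\lambda$ a $T$-weight of $V$, so it suffices to show $\Vert f^*\lambda \Vert_{G'} = \Vert \lambda \Vert_G$ for each such $\lambda$; and by definition $\Vert f^*\lambda \Vert_{G'} = \max_{\beta'} \vert \langle \lambda, f_*(\beta'^{\vee}) \rangle \vert$, the maximum over the roots $\beta'$ of $(G', T')$ and $f_* \colon X_*(T') \to X_*(T)$ the induced map. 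So everything comes down to the images of the coroots $\beta'^{\vee}$ in $X_*(T)$ versus the coroots $\alpha^{\vee}$ of $(G, T)$.

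In case~(3) this is immediate: restriction of characters identifies the roots of $(G, T)$ with those of $(G^{\der}, T')$, each coroot $\alpha^{\vee}$ of $G$ lies in $X_*(T') \subseteq X_*(T)$ (the coroot lattice of $G$ is that of $G^{\der}$), and $\langle f^*\lambda, \beta'^{\vee} \rangle = \langle \lambda, \alpha^{\vee} \rangle$ whenever $\beta'$ corresponds to $\alpha$, so $\Vert f^*\lambda \Vert = \Vert \lambda \Vert$. For case~(4) set $N = \ker f$ and call a root $\beta'$ of $(G', T')$ \emph{bad} if $U_{\beta'} \subseteq N$ and \emph{good} otherwise. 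If $\beta'$ is bad then, $N$ being normal, the normal subgroup of $G'$ generated by $U_{\beta'}$ --- which contains $U_{-\beta'}$ --- lies in $N$, so $\langle U_{\beta'}, U_{-\beta'} \rangle \subseteq \ker f$ and $f_*(\beta'^{\vee}) = 0$. If $\beta'$ is good then $f(U_{\beta'})$ is a non-trivial connected unipotent subgroup normalized by $T = f(T')$, hence a root subgroup $U_{\alpha}$ with $\beta' = f^*\alpha$; and a standard argument with the unipotent radicals of opposite Borels shows $\alpha \mapsto f^*\alpha$ is a bijection from the roots of $G$ onto the good roots of $G'$. So it remains, for case~(4), to show that $f_*(\beta'^{\vee}) = \alpha^{\vee}$ exactly, for $\beta'$ good --- the one delicate point.

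This is where the smoothness of $\bar N := N/(N \cap Z')$ is used: without it, a Frobenius twist would give $p\,\alpha^{\vee}$ and genuinely change the norm. The restriction of $f$ to $\langle U_{\beta'}, U_{-\beta'} \rangle$ is a non-trivial homomorphism onto $\langle U_{\alpha}, U_{-\alpha} \rangle$, whose kernel is a proper normal subgroup of this rank-one group; so that kernel is either central --- in which case $f$ restricts there to a central isogeny, which does send $\beta'^{\vee}$ to $\alpha^{\vee}$ --- or it contains a Frobenius (or special-isogeny) kernel of the simple factor $G'_j$ of $G'^{\der}$ containing $\beta'$. In the latter case $N$, and hence $\bar N$, contains an infinitesimal subgroup whose Lie algebra has a component in the $G'_j$-direction; but $\bar N$ is a smooth normal subgroup of the adjoint group $G'^{\ad}$, hence (being automatically connected) a product of simple factors, and it cannot contain the factor corresponding to $G'_j$ because $\beta'$ is good --- so $\dim \Lie \bar N > \dim \bar N$, contradicting smoothness. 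Thus every good root contributes $\vert \langle \lambda, \alpha^{\vee} \rangle \vert$ and every bad root $0$, giving $\Vert f^*\lambda \Vert = \max(\Vert \lambda \Vert, 0) = \Vert \lambda \Vert$ and proving~(4). Finally~(1) and~(2) are instances of~(4): a central isogeny has $N \subseteq Z'$, so $\bar N = 1$; a projection $G' = G \times H \to G$ has $\bar N \cong H/Z_H$, which is semisimple --- both smooth. I expect this last step, extracting separability of $f$ on the good root subgroups from the smoothness of $\bar N$, to be the main obstacle: it is the one point sensitive to the hypothesis and to positive characteristic, and making the infinitesimal-group-scheme bookkeeping precise takes some care.
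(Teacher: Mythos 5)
Your route is genuinely different from the paper's: the paper proves (1), (2), (3) directly and then deduces (4) structurally, by lifting $f$ to a map $\wt{f}$ between the universal covers of the derived groups, using the smoothness hypothesis (together with the lemma that the reduced subscheme of a normal subgroup scheme is normal) to write $\ker\wt{f}=K\cdot H_{\red}$ with $K$ central, and factoring $\wt{f}$ as a projection onto a direct factor followed by two central isogenies; norm-invariance then follows by chaining (1)--(3). That factorization is uniform in the characteristic and never requires the root-by-root analysis you undertake. Your plan --- compare weights through compatible tori, show bad roots contribute $0$, and use smoothness of $\bar{N}=\ker f/(\ker f\cap Z')$ to force $f_*(\beta'^{\vee})=\alpha^{\vee}$ on good roots --- is a reasonable alternative and isolates the correct crux, but as written the crux is not yet proved.

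Concretely: (a) your claim that a non-central kernel of $f$ on $\langle U_{\beta'},U_{-\beta'}\rangle$ produces a $G'_j$-component in $\Lie\bar{N}$ implicitly uses that the adjoint module of $\SL_2$ has no proper nonzero submodule; this fails in characteristic $2$ (where $\mathfrak{sl}_2$ has a central line), and the proposition carries no hypothesis on $\chr k$. A safer dichotomy is on whether $df$ kills the root space $\mathfrak{g}'_{\beta'}$: if it does, that root line lies in $\Lie\ker f$ and survives modulo $\Lie Z'\subseteq\Lie T'$, feeding your contradiction; if it does not, one gets $f^*\alpha=\beta'$ and from there the coroot identity. Relatedly, your assertion ``$\beta'=f^*\alpha$'' for good roots is exactly the separability under debate (a priori only $f^*\alpha=p^i\beta'$), so it cannot be stated before the smoothness argument. (b) The bijection between roots of $G$ and good roots of $G'$ --- in particular the surjectivity, i.e.\ that every coroot of $G$ arises as some $f_*(\beta'^{\vee})$, which is what gives $\Vert f^*\lambda\Vert\ge\Vert\lambda\Vert$ --- is only gestured at. (c) There is a mild circularity: inside (4) you invoke that a central isogeny sends $\beta'^{\vee}$ to $\alpha^{\vee}$, which is case (1); prove (1) first (it is three lines, as in the paper) rather than declaring it a consequence of (4). (d) The final contradiction is mis-stated: once $\bar{N}$ is smooth, connected and normal in the adjoint group it \emph{equals} a product of simple factors, so ``$\dim\Lie\bar{N}>\dim\bar{N}$'' is not the issue; the contradiction is with $\Lie\bar{N}$ being the sum of the Lie algebras of those factors, and the reason $\bar{N}$ cannot contain the factor of $G'_j$ (namely $U_{\beta'}\subseteq NZ'$ would force $U_{\alpha}=f(U_{\beta'})\subseteq f(Z')\subseteq Z(G)$, impossible for a nontrivial unipotent group) needs to be said. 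None of these is beyond repair, but together they leave the decisive step of (4) open.
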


\begin{proof}
We may prove the proposition after passing to the closure of $k$.  We thus assume $k$ that is closed, and therefore,
that $G$ and $G'$ are split.  Although (4) subsumes (1) and (2) we will use (1) and (2) in the proof of (4), and so
prove them separately.

(1) Let $T$ be a split maximal torus of $G$.  Then $T'=f^{-1}(T)$ is a split maximal torus of $G'$.  Every weight of
$f^* V$ is of the form $f^* \lambda$ where $\lambda$ is a weight of $V$.  For a coroot $\alpha^{\vee}$ of $G'$ we have
the identity $\langle f^* \lambda, \alpha^{\vee} \rangle=\langle \lambda, f_* \alpha^{\vee} \rangle$.  The push-forward
$f_* \alpha^{\vee}$ is a coroot of $G$, and every coroot arises in this manner.  We thus find $\Vert f^* V \Vert=
\Vert V \Vert$.

(2) Write $G'=G \times G''$ so that $f$ is the projection onto $G$.  Let $T$ be a split maximal torus
of $G$ and $T''$ a split maximal torus of $G''$ so that $T'=T \times T''$ is a split maximal torus of $G'$.  The
weights of $f^* V$ coincide with the weights of $V$ in the obvious manner.  The coroots of $G'$ are the union of the
coroots of $G$ and $G''$.  As any coroot of $G''$ pairs to zero with a weight of $T$, we find $\Vert f^* V \Vert=
\Vert V \Vert$.

(3) Let $G'=G^{\der}$ and let $f:G' \to G$ be the natural inclusion.  Let $T$ be a split maximal torus of $G$.  Then
the reduced subscheme of the connected component of the identity of $f^{-1}(T)$ is a group (by the lemma following this
proof) and is a split maximal torus $T'$ of $G'$.
If $\alpha^{\vee}$ is a coroot of $G'$ then $f_*\alpha^{\vee}$ is a coroot of $G$ and all coroots arise in this manner.
We also have an adjointness between $f^*$ on weights and $f_*$ on coweights.  The proof now proceeds as in part (1).

(4) We have a diagram
\begin{displaymath}
\xymatrix{
\wt{G}' \ar[r]^{p'} \ar[d]_{\wt{f}} & (G')^{\der} \ar[r]^{i'} \ar[d]^{f'} & G' \ar[d]^f \\
\wt{G} \ar[r]^p & G^{\der} \ar[r]^i & G }
\end{displaymath}
Here $\wt{G}$ is the universal cover of $G^{\der}$ and similarly for $\wt{G}'$.  The map $\wt{f}$ is a lift of $f'$.
Let $H=\ker{\wt{f}}$ and let $H_{\red}$ be the reduced subscheme of $H$; it is a normal subgroup of $\wt{G}'$ by the
lemma following this proof.  Of course, $H_{\red}$ is smooth since it is reduced.  Let $K=H \cap \wt{Z}'$.  Then
$H/K=\ker{f}/(\ker{f} \cap Z')$ is smooth by hypothesis.  The map $H_{\red} \to H/K$ is between smooth groups
of the same dimension and is surjective on connected components; it is therefore surjective.  We thus have $H=K
H_{\red}$.  The map $\wt{f}$ can now be factored as
\begin{displaymath}
\wt{G}' \to \wt{G}'/H_{\red}^{\circ} \to \wt{G}'/H_{\red} \to \wt{G}'/H = \wt{G}
\end{displaymath}
The kernel of the first map is $H_{\red}^{\circ}$, which is a direct factor of $\wt{G}'$ since it is smooth, connected
and normal.  The kernel of the second map is $\pi_0(H_{\red})$, which is \'etale and therefore central.  The kernel
of the third map is $H/H_{\red}$, which is the image of $K$ in $\wt{G}'/H_{\red}$, and therefore central.  We thus see
that $\wt{f}$ is a composition of a projection onto a direct factor with two central isogenies.  It follows from (1) and
(2) that $\Vert \wt{f}^* W \Vert=\Vert W \Vert$ for any representation $W$ of $\wt{G}$.  We now have:
\begin{displaymath}
\Vert f^* V \Vert=\Vert (i')^* f^* V \Vert=\Vert (p')^* (i')^* f^* V \Vert=\Vert \wt{f}^* p^* i^* V \Vert=
\Vert p^* i^* V \Vert=\Vert i^* V \Vert=\Vert V \Vert.
\end{displaymath}
The first equality uses (3), the second (1), the third the commutativity of the diagram, the fourth the fact that
$\wt{f}^*$ preserves norm, the fifth (1), the sixth (3).
\end{proof}

\begin{lemma}
\label{rep-bg-8}
Let $G$ be an affine group over a field $k$ and let $G_{\red}$ be the reduced subscheme of $G$.
\begin{enumerate}
\item If $k$ is perfect then $G_{\red}$ is a subgroup of $G$.
\item If $G$ is a closed normal subgroup of a smooth affine group $H$ then $G_{\red}$ is stable under conjugation by
$H$.
\end{enumerate}
Thus if $k$ is perfect and $G$ is a closed normal subgroup of a smooth affine group then $G_{\red}$ is a closed normal
subgroup of $G$.
\end{lemma}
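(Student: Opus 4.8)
The plan is to deduce all three statements from two standard permanence properties of reduced schemes over a field: (i) over a perfect field $k$, a reduced $k$-scheme of finite type is geometrically reduced; and (ii) if $X$ is a geometrically reduced $k$-scheme and $Y$ is a reduced $k$-scheme then $X \times_k Y$ is reduced. I will also use repeatedly the universal property of the reduction: any morphism from a reduced scheme to $G$ factors (uniquely) through the closed immersion $G_{\red} \hookrightarrow G$.

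For (1), since $k$ is perfect, $G_{\red}$ is geometrically reduced by (i), so by (ii) the product $G_{\red} \times_k G_{\red}$ is reduced. Hence the multiplication $m : G \times_k G \to G$, restricted to $G_{\red} \times_k G_{\red}$, is a morphism out of a reduced scheme and therefore factors through $G_{\red}$. Likewise the inversion $G \to G$ restricted to $G_{\red}$ factors through $G_{\red}$ (no perfectness needed here), and the identity section $\Spec k \to G$ factors through $G_{\red}$ as $\Spec k$ is reduced. Together these three factorizations say that $G_{\red}$, which is a closed subscheme of $G$, is a subgroup scheme.

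For (2), I would use the conjugation morphism $c : H \times_k G \to G$, $(h,g) \mapsto h g h^{-1}$, which is defined since $G$ is normal in $H$. The restriction of $c$ to $H \times_k G_{\red}$ has reduced source: indeed $H$ is geometrically reduced because it is smooth over $k$, so (ii) applies with $X = H$ and $Y = G_{\red}$; note that here, in contrast with (1), we do not need $k$ perfect, since smoothness of $H$ furnishes geometric reducedness of one of the factors for free. By the universal property of the reduction, $c$ then factors through $G_{\red}$, which is precisely the assertion that $G_{\red}$ is stable under conjugation by $H$. For the final clause, if $k$ is perfect and $G \trianglelefteq H$ with $H$ smooth affine, then $G_{\red}$ is a closed subgroup of $G$ by (1), and since $G \subseteq H$ the conjugation action of $G$ on $G$ is obtained by restricting the conjugation action of $H$, which preserves $G_{\red}$ by (2); hence $G_{\red}$ is normal in $G$.

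The content of the lemma is really just keeping track of which product needs to be reduced and why: in (1) both factors are $G_{\red}$, so one genuinely needs $k$ perfect in order to know that $G_{\red}$ is geometrically reduced, whereas in (2) the smooth group $H$ already supplies the requisite geometric reducedness of one factor. I do not anticipate any serious obstacle beyond correctly invoking (i) and (ii) and the universal property of reduction.
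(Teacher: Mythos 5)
Your proposal is correct, and part (1) is essentially the paper's own argument: perfectness of $k$ makes $G_{\red}$ geometrically reduced, hence $G_{\red} \times_k G_{\red}$ reduced, so multiplication (and, as you spell out, inversion and the identity section) factor through $G_{\red}$ by the universal property of reduction. Part (2), however, goes by a genuinely different route. The paper argues pointwise: conjugation by each $h \in H(k)$ is an automorphism of the scheme $G$ and so preserves $G_{\red}$, and this is then spread out over $H$ using Zariski density of $H(k)$ when $k$ is infinite, with finite $k$ handled by passing to the algebraic closure (legitimate since finite fields are perfect). You instead note that smoothness of $H$ already supplies geometric reducedness of one factor, so $H \times_k G_{\red}$ is reduced over any field, and the conjugation morphism $c \colon H \times_k G \to G$ restricted to it factors through $G_{\red}$ by the universal property of reduction. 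Your version is more uniform: no case distinction on $k$, no density-of-rational-points input (a step in the paper's sketch that is delicate for infinite imperfect fields, though harmless in the paper's applications), and it yields the scheme-theoretic stability statement in one stroke. What the paper's argument buys in exchange is that it only needs $G$ to be carried into itself by conjugation by points of $H$, whereas your argument uses normality in the functorial (subgroup-scheme) sense so that $c$ is defined as a morphism into $G$; that is the standard meaning and is exactly what holds in the paper's applications, where $G$ arises as a kernel, so this is a point to make explicit rather than a gap. Your deduction of the final clause from (1) and (2) is fine.
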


\begin{proof}
(1) Since $k$ is perfect, the product $G_{\red} \times G_{\red}$ (fiber product over $k$) is reduced.  Therefore the
composite
\begin{displaymath}
G_{\red} \times G_{\red} \to G \times G \to G
\end{displaymath}
factors through the inclusion $G_{\red} \to G$.  This shows that $G_{\red}$ is a subgroup of $G$.

(2) Given $h \in H(k)$, the map $G \to G$ given by conjugation by $h$ induces a map $G_{\red} \to G_{\red}$.  If
$k$ is infinite then $H(k)$ is dense in $H$, and so $G_{\red}$ is stable under conjugation by $H$.
If $k$ is finite then it is perfect, and one may therefore verify that $G_{\red}$ is stable by conjugation
after passing to the closure; since the closure is infinite, the previous argument applies.
\end{proof}

We thank Brian Conrad for informing us of counterexamples to the above statements when the hypotheses are not in
place.

\subsection{Representations of small norm}

We let $\Rep(G)$ be the category of representations of $G$.  For an integer $n$ we let $\Rep^{(n)}(G)$ be the full
subcategory of $\Rep(G)$ on those representations $V$ which satisfy $\Vert V \Vert < n$.  Both $\Rep(G)$ and
$\Rep^{(n)}(G)$ are abelian categories.  Furthermore, if
\begin{displaymath}
0 \to V' \to V \to V'' \to 0
\end{displaymath}
is an exact sequence in $\Rep(G)$ then $V$ belongs to $\Rep^{(n)}(G)$ if and only if both $V'$ and $V''$ do.  In other
words, $\Rep^{(n)}(G)$ is a Serre subcategory of $\Rep(G)$.

\begin{proposition}
\label{rep-bg-1}
Let $G/k$ be a reductive group over an algebraically closed field $k$.  Assume $\chr{k}$ is zero or large compared to
$n$ and $\dim{G}$.  Then:
\begin{enumerate}
\item The category $\Rep^{(n)}(G)$ is semi-simple.
\item The simple objects of $\Rep^{(n)}(G)$ are Borel-Weil representations.
\end{enumerate}
In other words, any representation $V$ of $G$ with $\Vert V \Vert$ small compared to $\chr{k}$ is a direct sum of
$V_{\lambda}$'s.
\end{proposition}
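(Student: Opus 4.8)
The plan is to reduce to the case where $G$ is a simple, simply connected group and then appeal to the modular representation theory of \cite{Jan}.

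\textbf{Reductions.} Each reduction below uses Proposition~\ref{rep-bg-0} to see that the norm, hence the subcategory $\Rep^{(n)}$, is preserved; at every stage $n$ is unchanged and the dimension does not increase, so the hypothesis on $\chr{k}$ persists. Restricting along $G^{\der}\hookrightarrow G$: since the radical $R$ of $G$ is a central torus, any $V\in\Rep(G)$ splits as $\bigoplus_\chi V_\chi$ according to the action of $R$, each $V_\chi$ is a $G$-submodule on which $R$ acts by the scalar $\chi$, so the $G$-submodules of $V_\chi$ are exactly its $G^{\der}$-submodules; hence $V$ is semisimple over $G$ iff $V\vert_{G^{\der}}$ is, $\Vert V\Vert=\Vert V\vert_{G^{\der}}\Vert$ by Proposition~\ref{rep-bg-0}(3), and $V_\lambda\vert_{G^{\der}}$ is again a Borel--Weil representation (the inclusion identifies the flag varieties). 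Pulling back along the central isogeny $G^{\scn}\to G^{\der}$: the pullback functor is fully faithful with essential image the representations trivial on the (central) kernel, it takes simple modules to simple modules and Borel--Weil representations to Borel--Weil representations (a central isogeny induces an isomorphism of flag varieties), and preserves the norm by Proposition~\ref{rep-bg-0}(1); so a representation of $G^{\der}$ has a given property in $\Rep^{(n)}$ iff its pullback does. Finally, writing $G^{\scn}=\prod_i G_i$ with each $G_i$ simple and simply connected, Proposition~\ref{rep-bg-0}(2) reduces us to a single factor: every object of $\Rep^{(n)}(\prod_i G_i)$ has composition factors $\boxtimes_i L_i$ with $L_i\in\Rep^{(n)}(G_i)$ (the norm of an exterior tensor product is the maximum of the norms), and the K\"unneth formula gives $\Ext^1_{\prod_i G_i}(\boxtimes_i L_i,\boxtimes_i L_i')=\bigoplus_j\Ext^1_{G_j}(L_j,L_j')\otimes\bigotimes_{i\neq j}\Hom_{G_i}(L_i,L_i')$, so vanishing of all $\Ext^1$ between simple objects of each $\Rep^{(n)}(G_i)$ forces semisimplicity of $\Rep^{(n)}(\prod_i G_i)$, and the description of the simple objects follows similarly.

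\textbf{The simple, simply connected case.} Now assume $G$ is simple and simply connected, and put $p=\chr{k}$, which we take to be $0$ or larger than $n+h$, where $h$ is the Coxeter number of $G$; since $h<\dim G$ this follows from the hypothesis. Fix $T\subset B$, let $\rho$ be the half-sum of the positive roots, and for dominant $\lambda$ write $V_\lambda=H^0(\lambda)=\nabla(\lambda)$ for the Borel--Weil representation (the paper's notation), $\Delta(\lambda)$ for the Weyl module, and $L(\lambda)=\soc(V_\lambda)$ for the simple socle. If $\lambda$ is dominant with $\Vert\lambda\Vert<n$ then for every positive root $\alpha$ one has $\langle\lambda+\rho,\alpha^\vee\rangle=\langle\lambda,\alpha^\vee\rangle+\langle\rho,\alpha^\vee\rangle<n+h\leq p$, since $\langle\lambda,\alpha^\vee\rangle\le\Vert\lambda\Vert<n$ and $\langle\rho,\alpha^\vee\rangle<h$; thus $\lambda$ lies in the closure of the lowest alcove, and therefore $\Delta(\lambda)=L(\lambda)=\nabla(\lambda)=V_\lambda$ (in characteristic $0$ this is the classical Borel--Weil theorem; in general see \cite[\S II.5]{Jan}).

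\textbf{Conclusion.} Let $V\in\Rep^{(n)}(G)$. Every composition factor of $V$ is of the form $L(\lambda)$ with $\lambda$ a weight of $V$, hence dominant with $\Vert\lambda\Vert<n$; by the previous paragraph $L(\lambda)=V_\lambda$, so every simple object of $\Rep^{(n)}(G)$ is a Borel--Weil representation (conversely $\Vert V_\lambda\Vert<n$ forces $\Vert\lambda\Vert<n$, hence $V_\lambda=L(\lambda)$ is simple), proving (2). For (1) it suffices to show $\Ext^1_G(L(\lambda),L(\mu))=0$ for all dominant weights $\lambda,\mu$ of $V$; but $L(\lambda)=\Delta(\lambda)$ and $L(\mu)=\nabla(\mu)$, and $\Ext^1_G(\Delta(\lambda),\nabla(\mu))=0$ for all dominant $\lambda,\mu$ by the general vanishing theorem for $\Ext$ between Weyl modules and dual Weyl modules \cite[\S II.4]{Jan}. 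Hence $V$ is the direct sum of its composition factors. The substantive external input carrying the argument is this pair of facts from \cite{Jan} — that a Weyl module with highest weight in the lowest alcove is simple, and that $\Ext^1$ between a Weyl module and a dual Weyl module vanishes — while the only genuinely delicate bookkeeping is checking that the chain of reductions is compatible with the norm, with semisimplicity, and with the notion of Borel--Weil representation, and making ``$\chr{k}$ large'' precise enough (via the estimate $\langle\lambda+\rho,\alpha^\vee\rangle<n+h$) to push every relevant highest weight into the lowest alcove.
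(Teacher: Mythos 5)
Your argument is correct, but it takes a genuinely different route from the paper's, for both halves of the statement. The paper does not reduce to the simple simply connected case: for (2) it quotes Springer's theorem \cite{Spr} that $V_{\lambda}$ is irreducible once $\chr{k}$ is large compared to $\dim{G}$ and $\Vert \lambda \Vert$, and for (1) it bounds the dimensions of the simple objects of $\Rep^{(n)}(G)$ (by lifting $V_{k,\lambda}$ to characteristic zero and combining the Weyl dimension formula with the finiteness of root data of bounded rank) and then invokes the theorems of Jantzen \cite{Jan2} and Larsen \cite{Lar2} that representations of dimension small compared to $\chr{k}$ are semisimple, so that any extension between two simples of $\Rep^{(n)}(G)$ splits. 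You replace these two black boxes with standard highest-weight theory: the lowest-alcove estimate $\langle \lambda+\rho, \alpha^{\vee} \rangle < n+h \le p$ giving $\Delta(\lambda)=L(\lambda)=\nabla(\lambda)$, and the vanishing $\Ext^1_G(\Delta(\lambda),\nabla(\mu))=0$. What your route buys is an explicit, essentially sharp bound (linear in $n$ and the Coxeter number, rather than ``large compared to $n$ and $\dim{G}$'') and independence from the low-dimensional semisimplicity theorems; what it costs is the chain of reductions (derived group, universal cover, product decomposition), which the paper avoids entirely by working with reductive $G$ directly, and whose compatibility with norms, semisimplicity and the Borel--Weil property has to be checked. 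You do check this, with one small step left implicit: for (2) you need not just that $V_{\lambda}\vert_{G^{\der}}$ is Borel--Weil, but the converse direction, namely that a simple $G$-module whose restriction to $G^{\der}$ is $\nabla_{G^{\der}}(\ol{\lambda})$ is itself $\nabla_G(\lambda)$; this follows from $L_G(\lambda)\vert_{G^{\der}}=L_{G^{\der}}(\ol{\lambda})$ together with $\dim \nabla_G(\lambda)=\dim \nabla_{G^{\der}}(\ol{\lambda})$ (the flag varieties and line bundles being identified), so it is routine, but it deserves a sentence.
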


\begin{proof}
The statements are well-known in characteristic zero, so we assume $k$ has positive characteristic.  We prove (2) first.
The simple objects of $\Rep(G)$ are exactly the $\soc(V_{\lambda})$.  Now, $\Vert \soc(V_{\lambda})
\Vert \ge \Vert \lambda \Vert$ as $\lambda$ occurs as a weight in $\soc(V_{\lambda})$.  Thus if
$\soc(V_{\lambda})$ belongs to $\Rep^{(n)}(G)$ then $\Vert \lambda \Vert < n$.  On the other hand, it is
known that for $\chr{k}$ large compared to $\dim{G}$ and $\Vert \lambda \Vert$ the representation $V_{\lambda}$
is irreducible (see \cite{Spr}).  This proves (2).

We now prove (1).
First note that the simple objects of $\Rep^{(n)}(G)$ have dimension bounded in terms of $n$ and $\dim{G}$.  Indeed,
the group $G$ is the pull-back to $k$ of a unique split reductive group over $\Z$, which we still call $G$.  The
simple $V_{k, \lambda}$ is just $V_{\Z, \lambda} \otimes k$.  Thus the dimension of $V_{k, \lambda}$ is the same as the
dimension of $V_{\C, \lambda}$.  This dimension can then be bounded in terms of $\dim{G}$ and $\Vert \lambda
\Vert$ using the Weyl dimension formula \cite[Cor.~24.6]{FH} and the fact that there are only finitely
many root data of a given rank.

Now, it is known (see \cite{Jan2}, \cite{Lar2}) that any representation of $G$ with small dimension compared to
$\chr{k}$ is semi-simple.  Since $\chr{k}$ is large, we thus find that if $A$ and $B$ are two simples of $\Rep^{(n)}(G)$
then any extension of $A$ by $B$ is semi-simple, and therefore $\Ext^1(A, B)=0$.  This shows that $\Rep^{(n)}(G)$ is
semi-simple.  This completes the proof of the proposition.
\end{proof}

\begin{corollary}
\label{rep-bg-7}
Let $G/k$ be a reductive group over an algebraically closed field $k$ of characteristic $\ell$ and let $V$ be a
representation of $G$ such that $\dim{V}$ and $\Vert V \Vert$ are small compared to $\ell$.  Then $V$ is semi-simple
and a direct sum of simple Borel-Weil representations.
\end{corollary}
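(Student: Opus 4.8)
The plan is to deduce the corollary from Proposition~\ref{rep-bg-1} by first replacing $G$ with a reductive group whose dimension is bounded in terms of $\dim V$. Let $\rho : G \to \GL(V)$ be the given representation and let $H = \rho(G) \subset \GL(V)$ be its scheme-theoretic image. Since $H$ is a quotient of the reductive group $G$, it is again reductive, and clearly $\dim H \le (\dim V)^2$; hence $\dim H$ is small compared to $\ell$. The representation $V$ factors through $H$, and, viewed as a representation of $H$, it has norm at most $\Vert V\Vert$: if $T \subset G$ and $\bar T = \rho(T) \subset H$ are compatible maximal tori, then the weights of $V$ on $\bar T$ pull back to weights of $V$ on $T$, while by the structure theory of surjections of reductive groups every coroot of $H$ is $1/c$ times the image of a coroot of $G$ under $X_*(T) \to X_*(\bar T)$, for some positive integer $c$; so every pairing computing the norm of $V$ over $H$ is bounded by one computing $\Vert V\Vert$. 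Thus the norm of $V$ over $H$ is also small compared to $\ell$.

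Now apply Proposition~\ref{rep-bg-1} to $H$, with $n$ one more than the norm of $V$ over $H$. Since $\ell$ is large compared to $n$ and $\dim H$, we conclude that $V \cong \bigoplus_i V_{H, \mu_i}$ as an $H$-module, where the $V_{H,\mu_i}$ are (irreducible) Borel-Weil representations of $H$. Because a subspace of $V$ is $G$-stable if and only if it is $H$-stable, $V$ is semisimple as a $G$-module. It remains to see that each summand $W := V_{H,\mu}$, viewed as a $G$-module through $\pi := \rho : G \to H$, is a Borel-Weil representation of $G$. Since $\pi$ is surjective, $W$ is simple as a $G$-module; writing $\bar B = \pi(B)$ for a Borel $B \subset G$ containing $T$, one checks that the highest weight of $W$ is $\lambda := \pi^*\mu$, so that $W \cong L_G(\lambda) \hookrightarrow V_{G,\lambda}$, the Borel-Weil representation of $G$ attached to $\lambda$. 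This inclusion is an equality by a dimension count: $\dim_k V_{G,\lambda}$ and $\dim_k V_{H,\mu}$ are both independent of the characteristic --- this is the base-change property of Borel-Weil representations established above via Kempf vanishing --- and in characteristic zero $V_{G,\lambda}$ is the irreducible $G$-representation of highest weight $\lambda$, which is $\pi^*$ of the irreducible $H$-representation $V_{H,\mu}$; hence $\dim_k V_{G,\lambda} = \dim_k V_{H,\mu} = \dim_k W = \dim_k L_G(\lambda)$, forcing $V_{G,\lambda} = L_G(\lambda) = W$.

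Given Proposition~\ref{rep-bg-1}, the argument is essentially formal; the place where the smallness hypotheses do real work --- and the main point to get right --- is the first step, namely that $\rho(G)$ is reductive of dimension at most $(\dim V)^2$ and that passing to it does not increase the norm of $V$. (In particular one should read ``small compared to $\ell$'' in the hypothesis as ensuring that $(\dim V)^2$, and not merely $\dim V$, is small, since that is the quantity Proposition~\ref{rep-bg-1} consumes.)
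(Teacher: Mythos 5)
Your overall strategy is the paper's (factor $\rho$ through a reductive group whose dimension is bounded in terms of $\dim V$, apply Proposition~\ref{rep-bg-1} there, then pull back along the surjection), but by passing all the way to the image $H=\rho(G)$ you create a problem your argument does not address: in characteristic $\ell$ the surjection $\pi\colon G\to H$ may involve a \emph{non-central} isogeny (a Frobenius factor), since the kernel of $\pi$ need not have smooth non-central part --- this is exactly the situation the smoothness hypothesis in Proposition~\ref{rep-bg-0}(4) is there to exclude. The genuine gap is in your last step: the sentence ``in characteristic zero $V_{G,\lambda}$ is \ldots $\pi^*$ of the irreducible $H$-representation $V_{H,\mu}$'' presupposes a characteristic-zero (or over-$\Z$) incarnation of $\pi$, which exists only when the isogeny part of $\pi$ is central. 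If $\pi$ had a Frobenius factor, the pullback of $V_{H,\mu}$ would be a Frobenius twist: still simple over $G$, but of dimension $\dim V_{H,\mu}$, which is strictly smaller than $\dim V_{G,\lambda}$ for its highest weight $\lambda=\pi^*\mu$; it is then the proper socle of $V_{G,\lambda}$ and not a Borel--Weil module, and your dimension count fails. (Your earlier claim that every coroot of $H$ is $1/c$ times the image of a coroot of $G$ is true --- it follows from the isogeny theorem, allowing Frobenius factors, after first dividing out the reduced identity component of the kernel --- but it is precisely the kind of statement the paper is careful about in Proposition~\ref{rep-bg-0}, so the inequality you need deserves an argument rather than an appeal to ``structure theory.'')

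The gap is fixable from the hypotheses, but you must say how: since $H\subset\GL(V)$ acts faithfully, a Frobenius factor in $\pi$ would force some weight of $V$, viewed over $G$, to pair with a coroot with absolute value at least $\ell$, contradicting $\Vert V\Vert<\ell$; and for $\ell>3$ every non-central isogeny is built from Frobenii, so $\pi$ is a smooth-kernel quotient followed by a central isogeny and your characteristic-zero identification then goes through. The paper sidesteps the whole issue by stopping short of the image: it takes $G'=G/H'$ with $H'$ the reduced identity component of $\ker(G\to\GL(V))$ (a smooth normal subgroup by Lemma~\ref{rep-bg-8}), so that $G\to G'$ has smooth kernel, Proposition~\ref{rep-bg-0}(4) gives $\Vert V\Vert$ the same for $G$ and $G'$, $\dim G'$ is bounded because $G'\to\GL(V)$ has finite kernel, and the pullback of a Borel--Weil representation along the smooth-kernel surjection $G\to G'$ is again Borel--Weil. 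Either add the Frobenius-exclusion argument or switch to the paper's intermediate group.
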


\begin{proof}
Let $H$ be the kernel of $G \to \GL(V)$, let $H'$ be the reduced subscheme of $H^{\circ}$ and let $G'=G/H'$.  Then
the map $G' \to \GL(V)$ has finite kernel.  Thus $\dim{G'}$ is bounded by $\dim{V}$ and is therefore small compared
to $\ell$.  By Proposition~\ref{rep-bg-0}, $\Vert V \Vert$ is the same for $G$ and $G'$.  By Proposition~\ref{rep-bg-1},
$V$ is semi-simple for $G'$ and a direct sum of simple Borel-Weil representations.  It follows that the same
holds for $G$.  (The restriction of a Borel-Weil representation along a surjection is still Borel-Weil.)
\end{proof}

\begin{proposition}
\label{rep-bg-4}
Let $K/\Q_{\ell}$ be an extension with ring of integers $\ms{O}_K$ and residue field $k$.  Let $G/\ms{O}_K$ be a
reductive group and let $V$ be a finite free $\ms{O}_K$-module with a representation of $G$.  Then $\Vert V_k \Vert=
\Vert V_K \Vert$ and this is bounded in terms of $\rk{V}$.  If the representation of $G_K$ on $V_K$ is absolutely
irreducible and $\ell$ is large compared to $\rk{V}$ then the representation of $G_k$ on $V_k$ is absolutely
irreducible.
\end{proposition}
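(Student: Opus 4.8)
The plan is to establish the two assertions in turn, in both cases first reducing to the case where $G$ is \emph{split} over $\ms{O}_K$. The reduction step is to replace $\ms{O}_K$ by the completion of its maximal unramified extension: this leaves $\Vert V_k \Vert$, $\Vert V_K \Vert$, $\rk V$ and the absolute irreducibility of $V_K$ unchanged, while the residue field becomes algebraically closed, and over a complete discrete valuation ring with algebraically closed residue field a reductive group scheme is automatically split (see \cite{SGA}). So assume $G/\ms{O}_K$ is split, with split maximal torus $T \subseteq B \subseteq G$. Then $X^*(T)$, the roots and the coroots are literally the same objects for the generic fibre $G_K$ and the special fibre $G_k$, so the notions of dominant weight and of the norm $\Vert \lambda \Vert$ agree for the two fibres. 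Decomposing $V$ as a representation of the diagonalizable group $T$ gives $V = \bigoplus_{\lambda \in X^*(T)} V_\lambda$ with each $V_\lambda$ a direct summand, hence a free $\ms{O}_K$-module; its rank is the multiplicity of $\lambda$ and may be read off from either fibre. Thus $V_K$ and $V_k$ have the same weights with the same multiplicities, and in particular $\Vert V_k \Vert = \Vert V_K \Vert$.

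For the boundedness statement I would argue exactly as in the proof of Proposition~\ref{rep-bg-1}. Write $V_K$ as a direct sum of irreducible $G_K$-modules $W_i$, so that $\Vert V_K \Vert = \max_i \Vert W_i \Vert$ and $\dim W_i \le \rk V$. By Proposition~\ref{rep-bg-0} I may replace $G_K$ by its image in $\GL(V_K)$, whose dimension, hence rank, is at most $(\rk V)^2$; there are then finitely many possibilities for its root datum, and for each one the Weyl dimension formula bounds $\Vert W_i \Vert$ in terms of $\dim W_i \le \rk V$. Hence $\Vert V_K \Vert = \Vert V_k \Vert$ is bounded in terms of $\rk V$.

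For the second assertion, assume $V_K$ is absolutely irreducible and $\ell$ is large compared to $\rk V$, and reduce as above to the case $k$ algebraically closed and $G$ split over $\ms{O}_K$. By the first part $\dim V_k = \rk V$ and $\Vert V_k \Vert = \Vert V_K \Vert$ are both small compared to $\ell$, so Corollary~\ref{rep-bg-7} applied to the reductive group $G_k/k$ gives a decomposition $V_k = \bigoplus_j V_{k, \lambda_j}$ into irreducible Borel-Weil representations with the $\lambda_j$ dominant. On the other hand, $V_K$ is an absolutely irreducible representation of the split reductive group $G_K$ in characteristic $0$, so the classical Borel-Weil theorem gives $V_K = V_{K, \mu}$ for a dominant weight $\mu$. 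Now I would match the two pictures: all weights of $V_{K, \mu}$ are $\preceq \mu$, and by the first part the weights of $V_k$ coincide with those of $V_K$; choosing $j$ with $\mu$ a weight of $V_{k, \lambda_j}$ forces $\mu \preceq \lambda_j$, while $\lambda_j$, being a weight of $V_k$ and hence of $V_{K, \mu}$, satisfies $\lambda_j \preceq \mu$, so $\lambda_j = \mu$. Since the Borel-Weil module attached to $\mu$ has dimension independent of the characteristic (Kempf's vanishing theorem), $\dim V_{k, \lambda_j} = \dim V_{K, \mu} = \dim V_K = \dim V_k$, so there can be no other summands and $V_k = V_{k, \mu}$ is irreducible. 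As $k$ is algebraically closed this means $V_k$ is absolutely irreducible, as required.

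The step I expect to be the main obstacle is the reduction itself: one must check that passing to a base over which $G$ is split and whose residue field is algebraically closed disturbs neither the reductivity of $G$ over the base nor the absolute irreducibility of $V_K$, and that a chosen split maximal torus genuinely identifies the weight data of the two fibres together with multiplicities. Once that setup is secured, the rest is a short comparison of characters and dimensions resting on Corollary~\ref{rep-bg-7} and Kempf vanishing.
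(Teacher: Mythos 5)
Your proof is correct and follows essentially the same route as the paper: reduce to the split case, observe that $V_K$ and $V_k$ have the same weights with multiplicities, bound the norm via the Weyl dimension formula, and apply Corollary~\ref{rep-bg-7} to write the special fibre as a direct sum of Borel--Weil modules. The only cosmetic difference is at the end, where the paper lifts the modular summands to $\ms{O}_{\ol{K}}$ and compares characters in characteristic zero, while you pin down the highest weight via the dominance order and finish with a dimension count from Kempf vanishing --- the same comparison run in the opposite direction.
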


\begin{proof}
By enlarging $K$ if necessary we can assume that $G$ is split.  Let $T$ be a split maximal torus in $G$.  As
maps of tori are rigid, the weights of $T$ in $V_K$ and $V_k$ are the same.  This shows that their norms agree.
The fact that $\Vert V_K \Vert$ is bounded in terms of $\rk{V}$ is a fact about representations of
complex Lie groups and can be proved using the Weyl dimension formula \cite[Cor.~24.6]{FH}.

Now, assume that $V_{\ol{K}}$ is irreducible for the action of $G_{\ol{K}}$ and that $\ell$ is large compared to
$\rk{V}$.  By the first paragraph, $\ell$ is large compared to $\Vert V_k \Vert$.  It thus follows from
Corollary~\ref{rep-bg-7} that we can write $V_{\ol{k}}=\bigoplus V_{\ol{k}, \lambda_i}$ with each $V_{\ol{k},
\lambda_i}$ irreducible.  The representations $V_{\ol{k}, \lambda_i}$ lift to $\ms{O}_{\ol{K}}$.  By the first
paragraph, we see that $V_{\ol{K}}$ and $\bigoplus V_{\ol{K}, \lambda_i}$ have the same weights, and are thus
isomorphic.  Since $V_{\ol{K}}$ is irreducible, there must therefore be only one term in the sum, and so $V_{\ol{k}}$
must be irreducible as well.
\end{proof}

\subsection{Representations of $G(k)$}

Let $k$ be a finite field and let $G/k$ be a reductive group.  We denote by $\Rep(G(k))$ the category of representations
of the finite group $G(k)$ on $\ol{k}$-vector spaces.

\begin{proposition}
\label{rep-bg-3}
Let $k$ be a finite field and $G/k$ a semi-simple simply connected group.  Assume $\chr{k}$ is sufficiently large
compared to $\dim{G}$ and $n$.  Then the functor $\Rep^{(n)}(G_{\ol{k}}) \to \Rep(G(k))$ is fully faithful and the
essential image is a Serre subcategory of $\Rep(G(k))$.
\end{proposition}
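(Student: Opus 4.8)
The plan is to show that the restriction functor identifies $\Rep^{(n)}(G_{\ol k})$ with the full subcategory of $\Rep(G(k))$ consisting of those semisimple modules all of whose composition factors are restrictions of ``small'' simple algebraic representations. The two real inputs are Steinberg's restriction theorem and a cohomological vanishing statement for the finite group $G(k)$; everything else is bookkeeping in semisimple categories.

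First I would recall, via Proposition~\ref{rep-bg-1}, that since $\chr k=\ell$ is large compared to $n$ and $\dim G$ the category $\Rep^{(n)}(G_{\ol k})$ is semisimple, with simple objects the Borel--Weil representations $V_\lambda$ for $\Vert\lambda\Vert<n$; for these $\lambda$ the module $V_\lambda$ is already irreducible, so $V_\lambda=\soc(V_\lambda)=L(\lambda)$, the simple module of highest weight $\lambda$. Since $\ell>n$, each such $\lambda$ satisfies $\langle\lambda,\alpha^\vee\rangle<\ell$ for all simple coroots $\alpha^\vee$, hence is $\ell$-restricted, hence $q$-restricted for $q=|k|$. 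Steinberg's theorem (this is where the simply-connectedness of $G$ is used) then gives that $L(\lambda)|_{G(k)}$ is absolutely irreducible and that $L(\lambda)|_{G(k)}\cong L(\mu)|_{G(k)}$ forces $\lambda=\mu$. Granting this, full faithfulness is formal: every object of $\Rep^{(n)}(G_{\ol k})$ is a finite direct sum of the $L(\lambda)$, so for $V,W\in\Rep^{(n)}(G_{\ol k})$ the spaces $\Hom_{G_{\ol k}}(V,W)$ and $\Hom_{G(k)}(V|_{G(k)},W|_{G(k)})$ are computed by the same decomposition into $\Hom$-spaces between simples, each such space being $\ol k$ or $0$ according to whether the two highest weights agree.

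Next I would analyze the essential image, which by the above is exactly the class of finite direct sums of the modules $L(\lambda)|_{G(k)}$ with $\Vert\lambda\Vert<n$, equivalently the semisimple $G(k)$-modules whose composition factors all have this form. This class is closed under subobjects and quotients in $\Rep(G(k))$: a submodule of a semisimple module is semisimple, and its composition factors form a sub-multiset of those of the ambient module, hence are again of the required form; the case of quotients is dual. So the real point is closure under extensions. If $0\to A\to E\to B\to 0$ is exact in $\Rep(G(k))$ with $A,B$ in the essential image, then every composition factor of $E$ is some $L(\lambda)|_{G(k)}$ with $\Vert\lambda\Vert<n$, and $E$ lies in the essential image precisely when it is semisimple, i.e.\ precisely when the sequence splits. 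Hence it suffices to prove that $\Ext^1_{G(k)}(L(\lambda)|_{G(k)},L(\mu)|_{G(k)})=0$ for all $\lambda,\mu$ with $\Vert\lambda\Vert,\Vert\mu\Vert<n$, once $\ell$ is large enough in terms of $n$ and $\dim G$.

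This vanishing is the step I expect to be the main obstacle, and it is the only place where serious input from the modular representation theory of finite groups of Lie type is required. Since the weights here all sit well inside the lowest alcove, I would obtain it from the comparison, in this ``generic'' range of bounded weights and large $\ell$, between the cohomology of $G(k)$ and the rational cohomology of $G_{\ol k}$ (the circle of ideas around generic cohomology): in that range $\Ext^1_{G(k)}(L(\lambda)|_{G(k)},L(\mu)|_{G(k)})$ is identified with $\Ext^1_{G_{\ol k}}(L(\lambda),L(\mu))$, which vanishes because $\Rep^{(n)}(G_{\ol k})$ is semisimple by Proposition~\ref{rep-bg-1}. Alternatively, since $\dim L(\lambda)$ is bounded in terms of $n$ and $\dim G$ (Weyl dimension formula together with the finiteness of root data of bounded rank, exactly as in the proof of Proposition~\ref{rep-bg-1}), the module $E$ above has dimension small compared to $\ell$, and one may instead invoke the semisimplicity of sufficiently small-dimensional representations of $G(k)$ (cf.\ the references cited in the proof of Proposition~\ref{rep-bg-1}). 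With this vanishing in hand, the bookkeeping of the previous two paragraphs yields both that the functor is fully faithful and that its essential image is a Serre subcategory of $\Rep(G(k))$.
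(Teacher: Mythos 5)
Your proposal is correct and follows the same skeleton as the paper's argument, but it is more explicit at exactly the points the paper leaves implicit. The paper's proof is two sentences: the functor is faithful and exact, and the conclusion is asserted to follow from the semisimplicity of $\Rep^{(n)}(G_{\ol{k}})$ (Proposition~\ref{rep-bg-1}) together with the fact, cited from \cite[\S 1.13]{Lar}, that an irreducible algebraic representation of small norm stays irreducible on $G(k)$; no separate discussion is given of the distinctness of the restricted simples (needed for fullness) or of closure of the essential image under extensions (the Serre property). You supply both: irreducibility and distinctness via Steinberg's restriction theorem --- note that since $G$ is only assumed semisimple simply connected over the finite field $k$, not split, you need the twisted version of Steinberg's theorem for the fixed points of a Frobenius endomorphism, which does hold --- and the vanishing of $\Ext^1_{G(k)}$ between restricted small simples, which you correctly isolate as the real content, obtained either from generic cohomology (Cline--Parshall--Scott) in the bounded-weight, large-$\ell$ range or from semisimplicity of small-dimensional $G(k)$-modules. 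One caveat on the second alternative: the references invoked in the proof of Proposition~\ref{rep-bg-1} are used there for the algebraic group, so for your purposes you must cite the finite-group version of the low-dimensional semisimplicity theorem (which exists, e.g.\ in Jantzen's paper) or fall back on the generic cohomology route. With that adjustment both of your routes are valid, and your write-up in effect documents the extension-vanishing step that the paper's brief proof delegates to the citation of Larsen.
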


\begin{proof}
The functor $\Rep^{(n)}(G_{\ol{k}}) \to \Rep(G(k))$ is clearly faithful and exact.  The desired properties now follow
from the fact that $\Rep^{(n)} (G_{\ol{k}})$ is semi-simple and the fact that if $V$ is an irreducible representation
of $G_{\ol{k}}$ with norm small compared to $\chr{k}$ then it stays irreducible when restricted to $G(k)$
(see \cite[\S 1.13]{Lar}).
\end{proof}

\subsection{Representations of $\Lie(G)$}

We will need the following result:

\begin{proposition}
\label{rep-bg-2}
Let $k$ be the algebraic closure of a finite field, $G/k$ a semi-simple group and $V$ an irreducible representation of
$G$.  Pick a system of positive roots $P$ in $\mf{g}=\Lie(G)$.  Assume $\chr{k}$ is large compared to $\Vert V
\Vert$ and $\dim{G}$.  Then the subspace of $V$ annihilated by $P$ is one dimensional.  (This subspace is the
highest weight space of $V$ with respect to $P$.)
\end{proposition}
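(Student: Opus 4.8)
The plan is to reduce the assertion to the usual highest weight argument, the only genuinely positive-characteristic input being that $V$ remains irreducible as a module over $\mf{g}$.

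Since $V$ is irreducible it is isomorphic to $\soc(V_{\lambda})$ for a unique dominant weight $\lambda$, namely its highest weight. As $\lambda$ occurs among the weights of $V$, we have $\Vert \lambda \Vert \le \Vert V \Vert$, which is small compared to $\ell = \chr{k}$; in particular $0 \le \langle \lambda, \alpha^{\vee} \rangle < \ell$ for every simple coroot $\alpha^{\vee}$, so that $\lambda$ is a restricted dominant weight. I then invoke the standard fact that a simple $G$-module with restricted highest weight stays simple on restriction to the first Frobenius kernel $G_1$, hence also as a module over $\mf{g} = \Lie(G)$ (a $\mf{g}$-submodule of the restricted $\mf{g}$-module $V$ is automatically a $G_1$-submodule). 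Thus $V$ is an irreducible $\mf{g}$-module.

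Now fix the triangular decomposition $\mf{g} = \mf{n}^- \oplus \mf{h} \oplus \mf{n}^+$ attached to the chosen system $P$ of positive roots, so that $\mf{n}^+$ is the sum of the positive root spaces and $\mf{h} = \Lie(T)$. Let $v \ne 0$ be a weight vector of weight $\mu$ that is annihilated by $\mf{n}^+$. Since $[\mf{n}^+, \mf{n}^+] \subseteq \mf{n}^+$, a one-line induction shows the entire augmentation ideal of $U(\mf{n}^+)$ annihilates $v$; hence, using the Poincar\'e--Birkhoff--Witt decomposition $U(\mf{g}) = U(\mf{n}^-) U(\mf{h}) U(\mf{n}^+)$ together with the fact that $v$ is a weight vector, $U(\mf{g}) v = U(\mf{n}^-) v$, every weight of which is $\le \mu$. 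Since $V$ is $\mf{g}$-irreducible, $U(\mf{g}) v = V$, so every weight of $V$ is $\le \mu$. But $\mu$ is a weight of $V$, whose highest weight is $\lambda$, so $\mu \le \lambda$; combined with $\lambda \le \mu$ this gives $\mu = \lambda$. Therefore every vector annihilated by $\mf{n}^+$ lies in the $\lambda$-weight space of $V$, which is one dimensional; conversely that weight space is annihilated by $\mf{n}^+$ because $\mf{n}^+$ raises weights and $\lambda$ is maximal. Hence the subspace of $V$ annihilated by $P$ is precisely the one-dimensional highest weight space.

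I expect the real obstacle to be the step passing from irreducibility over $G$ to irreducibility over $\mf{g}$: this is exactly where the hypothesis that $\ell$ be large relative to $\Vert V \Vert$ enters, since it is what forces $\lambda$ to be restricted, and from there the Frobenius-kernel structure theory finishes the job. Every other step is the characteristic-zero highest weight argument and goes through unchanged; the hypothesis that $\ell$ be large relative to $\dim G$ is used only to keep the structure of $\mf{g}$ and its triangular decomposition as nice as in characteristic zero.
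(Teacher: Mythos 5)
Your argument is correct, but it takes a genuinely different route from the paper's. You reduce everything to irreducibility of $V$ over $\mf{g}=\Lie(G)$: since $\Vert\lambda\Vert\le\Vert V\Vert$ is small compared to $\ell$, the highest weight is restricted, Curtis's theorem (simplicity of the restricted simple module over the first Frobenius kernel, equivalently over the restricted enveloping algebra) gives $\mf{g}$-irreducibility, and then the classical PBW/highest-weight argument finishes. The paper avoids modular Lie algebra theory altogether: it identifies $V$ with $V_{\Z,\lambda}\otimes k$ (using Proposition~\ref{rep-bg-1} to know the Borel--Weil module is already irreducible), picks integral root vectors $X_r\in\mf{g}_{\Z}$, and observes that the subspace killed by $P$ is the kernel of the integral linear map $v\mapsto (X_r v)_{r\in P}$; since that kernel becomes one dimensional after $\otimes\,\C$ by characteristic-zero highest weight theory, rank considerations force the mod-$\ell$ kernel to be one dimensional for all sufficiently large $\ell$, with a bound depending only on the root datum and $\lambda$, hence on $\dim G$ and $\Vert V\Vert$. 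Your route is conceptually cleaner but rests on a substantially deeper input (Frobenius-kernel theory); the paper's spreading-out argument is more elementary and makes the ``$\ell$ large'' quantification transparent. Two small points to tighten in your version: Curtis's theorem is usually stated for semi-simple simply connected groups, so you should pass to $G^{\scn}$ --- harmless here, since $\ell$ does not divide the order of $\ker(G^{\scn}\to G)$, so the Lie algebras agree and the pullback of $V$ is simple with the same restricted highest weight; and to conclude that the \emph{entire} subspace annihilated by $P$ (not merely its weight vectors) is the highest weight line, note that $V^{\mf{n}^+}$ is $T$-stable because $T$ normalizes $\mf{n}^+$, hence is spanned by weight vectors, to which your argument then applies.
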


\begin{proof}
Denote by $G$ still the unique split group over $\Z$ giving rise to $G/k$.  By our hypotheses we have $V=V_{k, \lambda}$
for some dominant weight $\lambda$.  We know that $V_{k, \lambda}=V_{\Z, \lambda} \otimes k$ and similarly
$V_{\C, \lambda}=V_{\Z, \lambda} \otimes \C$.  Now, since $G$ is split over $\Z$ for each $r \in P$ we can find $X_r
\in \mf{g}_{\Z}$ which generates the $r$ root space of $\mf{g}_{\Z}$.  Consider the map
\begin{displaymath}
V_{\Z, \lambda} \to \bigoplus_P V_{\Z, \lambda}, \qquad v \mapsto (X_r \cdot v)_{r \in P}.
\end{displaymath}
This is a linear map of finite free $\Z$-modules.  After tensoring with $\C$ the kernel of this map is the subspace of
$V_{\C, \lambda}$ annihilated by $P$.  This is one dimensional by the usual highest weight theory over $\C$.  It thus
follows that for $\ell$ sufficiently large, the reduction of the map modulo $\ell$ will have one dimensional kernel.
This proves the proposition.
\end{proof}

\section{Highly regular elements of semi-simple groups}

Fix a finite field $k$ of cardinality $q$.  The purpose of this section is to demonstrate the following result:

\begin{proposition}
\label{reg-1}
Let $G/k$ be a semi-simple group and let $T \subset G$ be a maximal torus defined over $k$.  Let $n$ be an integer and
assume $q$ is large compared to $\dim{G}$ and $n$.  Then there exists an element $g \in T(k)$ for which the map
\begin{displaymath}
\{ \lambda \in X(T_{\ol{k}}) \mid \Vert \lambda \Vert <n \} \to \ol{k}^{\times},
\qquad \lambda \mapsto \lambda(g)
\end{displaymath}
is injective.
\end{proposition}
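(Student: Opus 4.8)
The plan is to translate the injectivity requirement into the condition that $g$ avoid finitely many proper subgroups of $T$, and then to produce such a $g$ by a point count. Injectivity of $\lambda \mapsto \lambda(g)$ on $\Lambda = \{\lambda \in X(T_{\ol{k}}) : \Vert \lambda \Vert < n\}$ is equivalent to the requirement that $\chi(g) \ne 1$ for every $\chi$ in the finite set $\Delta = \{\lambda - \mu : \lambda, \mu \in \Lambda,\ \lambda \ne \mu\}$ of \emph{nonzero} characters of $T_{\ol{k}}$. The first thing I would check is that $|\Lambda|$, and hence $|\Delta| \le |\Lambda|^2$, is bounded in terms of $n$ and $\dim G$ alone: since $G$ is semi-simple the coroots span a finite-index subgroup of the cocharacter lattice, so $\Vert \lambda \Vert < n$ confines $\lambda$ to a bounded region of $X(T_{\ol{k}}) \otimes \R$, and there are only finitely many root data of bounded dimension. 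I would also record that each $\chi \in \Delta$ satisfies $\Vert \chi \Vert < 2n$, so in particular the divisibility of $\chi$ in $X(T_{\ol{k}})$ is bounded in terms of $n$.

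The main work is to bound, for a fixed $\chi \in \Delta$, the number of $g \in T(k)$ with $\chi(g) = 1$. The key point is that if $g \in T(k)$ is fixed by Frobenius $\Frob$ and $\chi(g) = 1$, then applying $\Frob^i$ yields $(\Frob^i \chi)(g) = 1$ for all $i$; hence $g$ lies in $Z_\chi := \ker\bigl(T \to \Hom(M_\chi, \G_m)\bigr)$, where $M_\chi \subset X(T_{\ol{k}})$ is the subgroup generated by the (finite) Galois orbit of $\chi$. Because $M_\chi$ is Galois-stable and contains the nonzero element $\chi$, the subgroup scheme $Z_\chi$ is defined over $k$ and has dimension $< \dim T$; moreover $Z_\chi$ contains a subtorus $Z_\chi^{\circ}$ of the same dimension (the one cut out by the saturation of $M_\chi$) with $Z_\chi / Z_\chi^{\circ}$ finite of order bounded in terms of $n$ and $\dim G$ — this is where the bounded norm, hence bounded divisibility data, of $\chi$ enters. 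Using the standard estimates $(q-1)^d \le |S(k)| \le (q+1)^d$ for a $d$-dimensional torus $S$ over $k$, I then get $\#\{g \in T(k) : \chi(g) = 1\} \le |Z_\chi(k)| \le C\, q^{\dim T - 1}$ with $C = C(n, \dim G)$.

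Finally I would count. The set of ``bad'' $g \in T(k)$ violating injectivity has size at most $\sum_{\chi \in \Delta} \#\{g : \chi(g) = 1\} \le |\Delta| \cdot C \cdot q^{\dim T - 1}$, whereas $|T(k)| \ge (q-1)^{\dim T}$. Since $|\Delta|$, $C$ and $\dim T$ are all bounded in terms of $n$ and $\dim G$, choosing $q$ large compared to $n$ and $\dim G$ gives $(q-1)^{\dim T} > |\Delta|\, C\, q^{\dim T - 1}$, so a non-bad $g \in T(k)$ exists, proving the proposition. (If $G$ is trivial there is nothing to prove.)

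The step I expect to be the main obstacle is the uniform bound in the second paragraph: the naive kernel $\ker \chi$ need not be reduced, connected, or defined over $k$, so some bookkeeping is required. Passing to the Galois-stable sublattice $M_\chi$ repairs the descent, and bounding the index of $M_\chi$ in its saturation — which is harmless because a generating set of $M_\chi$ consists of characters of norm $< 2n$ — controls the non-reducedness and disconnectedness; with those two points in hand the torus point-count estimates finish the argument.
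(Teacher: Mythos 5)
Your proposal is correct and is essentially the paper's own argument: reduce to making $g$ avoid the kernels of the finitely many nonzero difference characters of norm $<2n$, bound each such kernel's $k$-points by $C\,q^{\dim T-1}$ using the Galois orbit of the character (your $Z_\chi$ cut out by $M_\chi$ is the same subgroup scheme the paper realizes as $\ker(T\to\Res_{k'/k}\G_m)$, with the torsion/saturation index bounded via finiteness of root data), and finish with the torus point-count $(q-1)^{\dim T}\le \#T(k)$ and a union bound. No substantive differences from the paper's proof.
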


Before proving the proposition we give a few lemmas.

\begin{lemma}
\label{reg-2}
Let $T/k$ be a torus of rank $r$.  Then $(q-1)^r \le \# T(k) \le (q+1)^r$.
\end{lemma}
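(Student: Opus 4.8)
The plan is to compute $\#T(k)$ exactly in terms of the action of Frobenius on the cocharacter lattice, and then bound the resulting expression using that this action has finite order. Since $T$ becomes split over a finite extension of $k$, the Frobenius $\phi$ of the $k$-structure acts on $X_*(T_{\ol k}) \cong \Z^r$ through a matrix $\gamma \in \GL_r(\Z)$ of finite order. Under the identification $T(\ol k) = X_*(T_{\ol k}) \otimes_{\Z} \ol k^{\times} \cong (\ol k^{\times})^r$, the Frobenius endomorphism sends $\psi \otimes x$ to $\gamma(\psi) \otimes x^q$; writing an integer matrix $M$ for the endomorphism of $(\ol k^{\times})^r$ given by $(x_i) \mapsto (\prod_j x_j^{M_{ij}})$, this says $\phi$ corresponds to $q\gamma$, and hence $T(k) = T(\ol k)^{\phi}$ is the kernel of the endomorphism of $(\ol k^{\times})^r$ attached to $q\gamma - I$. (Alternatively one could simply quote the standard formula $\#T(k) = |\det(\phi - 1 \mid X(T_{\ol k}) \otimes \Q)|$, which amounts to the same thing, and skip directly to the last paragraph.)

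Next I would establish the elementary fact that if $M$ is an integer $r \times r$ matrix with $\det M$ prime to $p = \chr k$, then the attached endomorphism of $(\ol k^{\times})^r$ has kernel of order $|\det M|$. This follows from the Smith normal form $M = U D V$ with $U, V \in \GL_r(\Z)$ and $D = \diag(d_1, \dots, d_r)$: the endomorphisms attached to $U$ and $V$ are automorphisms, while the one attached to $D$ has kernel $\prod_i \mu_{d_i}(\ol k)$, of order $\prod_i |d_i| = |\det M|$ since each $d_i$ is prime to $p$ (their product is) and $\ol k$ is algebraically closed. I then apply this with $M = q\gamma - I$, which is legitimate because $\det(q\gamma - I) \equiv \det(-I) = (-1)^r \not\equiv 0 \pmod p$. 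This gives $\#T(k) = |\det(q\gamma - I)|$.

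Finally, since $\gamma$ has finite order, its complex eigenvalues $\zeta_1, \dots, \zeta_r$ are roots of unity, so $|\zeta_i| = 1$ for all $i$. Therefore
\[
\#T(k) = |\det(q\gamma - I)| = \prod_{i=1}^{r} |q\zeta_i - 1| = \prod_{i=1}^{r} |q - \ol{\zeta_i}|,
\]
and for each $i$ the triangle inequality yields $q - 1 = q - |\ol{\zeta_i}| \le |q - \ol{\zeta_i}| \le q + |\ol{\zeta_i}| = q + 1$. Multiplying these $r$ inequalities gives $(q-1)^r \le \#T(k) \le (q+1)^r$, as desired.

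I do not anticipate a genuine obstacle. The only points requiring care are positive-characteristic ones — checking that $p \nmid \det(q\gamma - I)$ so that each $\mu_{d_i}(\ol k)$ has the expected cardinality, and being precise about the semilinear Frobenius in the identification $T(\ol k) \cong (\ol k^{\times})^r$. If one would rather not reprove the point-count formula, the first two paragraphs collapse into a citation and only the computation in the last paragraph remains.
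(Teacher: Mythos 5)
Your proof is correct and follows essentially the same route as the paper: the paper simply quotes the formula $\# T(k)=\det\bigl((q-F)\,\vert\, X(T_{\ol{k}})\bigr)$ from Oesterl\'e and then concludes exactly as in your last paragraph, using that $F$ has finite order so its eigenvalues are roots of unity. Your first two paragraphs (the cocharacter-lattice identification and the Smith normal form computation) just supply a self-contained proof of that cited point-count formula, as you yourself note, and they are accurate.
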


\begin{proof}
We have $\# T(k)=\det((q-F) \vert X(T_{\ol{k}}))$, where $F$ is the Frobenius in $\Gal(\ol{k}/k)$.  (For a proof of
this, see \cite[\S 1.5]{Oes}.)  Since $T$ splits over a finite extension, the action of $F$ on $X(T_{\ol{k}})$ has
finite order and so its eigenvalues are roots of unity.  We thus have $\# T(k)=\prod_{i=1}^r (q-\zeta_i)$ where each
$\zeta_i$ is a root of unity, from which the lemma easily follows.
\end{proof}

\begin{lemma}
\label{reg-3}
Let $G/k$ be a semi-simple group and let $T \subset G$ be a maximal torus defined over $k$.  Then $\# \{ \lambda \in
X(T_{\ol{k}}) \mid \Vert \lambda \Vert<n \}$ is bounded in terms of $\dim{G}$ and $n$.
\end{lemma}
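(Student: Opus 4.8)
The plan is to reduce the statement to an elementary counting problem about lattices. First I would pass to the algebraic closure: the torus $T_{\ol k}$ is split, so the character group $X(T_{\ol k})$ is a free abelian group of rank $r=\rk G$, and the roots and coroots of $(G_{\ol k},T_{\ol k})$ are defined. Since $\Vert\lambda\Vert$ is by definition computed from the split form of the pair, this reduction is harmless. Note that $r\le\dim G$ and that the number of roots is at most $\dim G$, so it suffices to bound $\#\{\lambda\in X(T_{\ol k}):\Vert\lambda\Vert<n\}$ in terms of $r$ and $n$ alone.

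The key input is that, because $G$ is semi-simple, the coroots span the cocharacter lattice $X_*(T_{\ol k})$ rationally; equivalently, the roots span $X(T_{\ol k})\otimes\Q$. Hence I can choose coroots $\alpha_1^\vee,\dots,\alpha_r^\vee$ forming a $\Q$-basis of $X_*(T_{\ol k})\otimes\Q$, and consider the homomorphism
$$\phi:X(T_{\ol k})\to\Z^r,\qquad \lambda\mapsto\bigl(\langle\lambda,\alpha_1^\vee\rangle,\dots,\langle\lambda,\alpha_r^\vee\rangle\bigr).$$
Because the $\alpha_i^\vee$ span $X_*(T_{\ol k})\otimes\Q$ and the natural pairing $X(T_{\ol k})\times X_*(T_{\ol k})\to\Z$ is perfect, $\phi$ is injective. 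If $\Vert\lambda\Vert<n$ then in particular $|\langle\lambda,\alpha_i^\vee\rangle|<n$ for each $i$, so $\phi(\lambda)$ lies in the finite set $\{-(n-1),\dots,n-1\}^r$, which has $(2n-1)^r$ elements. Therefore $\#\{\lambda:\Vert\lambda\Vert<n\}\le(2n-1)^r\le(2n-1)^{\dim G}$, which is the desired bound.

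The only step that requires justification is the claim that the coroots span $X_*(T_{\ol k})\otimes\Q$, but this is precisely the definition of semi-simplicity of $G$ (the roots and coroots have full rank), so I do not expect any genuine obstacle; everything else is bookkeeping. If one wanted an even cleaner bound one could instead choose $\alpha_1^\vee,\dots,\alpha_r^\vee$ among the simple coroots of a fixed positive system, but this is not needed since we only require boundedness.
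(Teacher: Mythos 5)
Your argument is correct, but it takes a different route from the paper. The paper disposes of this lemma in two lines: the quantity $\#\{\lambda \in X(T_{\ol{k}}) \mid \Vert \lambda \Vert < n\}$ depends only on $n$ and the root datum of $(G_{\ol{k}}, T_{\ol{k}})$, and there are only finitely many semi-simple root data of a given dimension, so the quantity is bounded as a function of $\dim G$ and $n$ without ever exhibiting an explicit bound. You instead prove the bound directly: since $G$ is semi-simple the coroots span $X_*(T_{\ol{k}}) \otimes \Q$, so you can extract coroots $\alpha_1^{\vee}, \dots, \alpha_r^{\vee}$ forming a rational basis, and the perfectness of the pairing makes $\lambda \mapsto (\langle \lambda, \alpha_i^{\vee}\rangle)_i$ injective; the condition $\Vert \lambda \Vert < n$ confines the image to a box, giving the explicit estimate $(2n-1)^r \le (2n-1)^{\dim G}$. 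Each step you flag (coroots of a semi-simple group having full rank, perfectness of the character--cocharacter pairing for a split torus) is standard and correctly invoked. What your approach buys is an effective, self-contained bound that avoids any appeal to the classification-flavored finiteness of root data; what the paper's approach buys is brevity and uniformity, since the same ``finitely many root data of bounded dimension'' principle is already used elsewhere in the paper (e.g., in Proposition~\ref{rep-bg-1} and Lemma~\ref{reg-4}), whereas your explicit constant is never actually needed downstream --- only boundedness is.
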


\begin{proof}
The quantity $\# \{ \lambda \in X(T_{\ol{k}}) \mid \Vert \lambda \Vert <n \}$ depends only on $n$ and the root
datum associated to $(G_{\ol{k}}, T_{\ol{k}})$.  Since there are only finitely many semi-simple root data of a given
dimension, the result follows.
\end{proof}

\begin{lemma}
\label{reg-4}
Let $G/k$ be a semi-simple group of rank $r$, $T \subset G$ a maximal torus defined over $k$ and $\lambda \in
X(T_{\ol{k}})$ a non-zero character satisfying $\Vert \lambda \Vert<n$.  Then the kernel of the map
$\lambda:T(k) \to \ol{k}^{\times}$ has order at most $C (q+1)^{r-1}$ for some constant $C$ depending only on $n$ and
$\dim{G}$.
\end{lemma}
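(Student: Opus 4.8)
The plan is to bound the kernel of $\lambda : T(k) \to \ol{k}^\times$ by realizing it as a subgroup of the $k$-points of a subtorus of codimension one (up to a bounded-index correction), and then apply Lemma~\ref{reg-2}. First I would observe that $\lambda$ defines a nonzero map of tori $T_{\ol{k}} \to \G_{m,\ol{k}}$; its kernel need not be a torus, but the connected component $S_{\ol{k}}$ of the reduced subscheme of $\ker(\lambda)$ is a subtorus of $T_{\ol{k}}$ of rank $r-1$. The issue is that $\lambda$ itself may only be defined over $\ol{k}$, not over $k$; however, since $T(k)$ is what we care about, I would instead work with the character group. Concretely, the subgroup $\{t \in T(k) : \lambda(t) = 1\}$ is contained in $\{t \in T(k) : \chi(t) = 1 \text{ for all } \chi \in \Z\lambda \cap X(T_{\ol{k}})^{?}\}$ — so the cleaner approach is to pass to the saturation. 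Let $M \subset X(T_{\ol{k}})$ be the saturation of $\Z\lambda$, i.e. the subgroup of characters $\chi$ with $m\chi \in \Z\lambda$ for some $m \geq 1$; then $X(T_{\ol{k}})/M$ is torsion-free of rank $r-1$ and corresponds to a quotient torus, while $M$ itself is generated by a single character $\mu$ with $\lambda = d\mu$ for some integer $d$ with $|d|$ bounded in terms of $\Vert \lambda \Vert < n$ (since the coefficients of $\lambda$ in a basis adapted to the root datum are bounded, using Lemma~\ref{reg-3}-type reasoning and finiteness of root data).

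Next I would use the exact sequence of algebraic groups over $\ol{k}$: let $Q = T_{\ol{k}}/S_{\ol{k}}$, a rank-one torus, so $\lambda$ factors as $T_{\ol{k}} \to Q \xrightarrow{\bar\lambda} \G_m$ where $\bar\lambda$ is multiplication by $d$ on $Q \cong \G_m$. Then $\ker(\lambda : T(k) \to \ol{k}^\times)$ maps into $Q(\ol{k})$ with image landing in the $d$-torsion $\mu_d(\ol{k})$, which has order at most $|d| \leq C_1$, a constant depending on $n$ and $\dim G$. The kernel of $T(k) \to Q(\ol{k})$ is contained in $S_{\ol{k}}(\ol{k})$, but I need a bound on the part landing in $T(k)$; since $S_{\ol{k}}(\ol{k}) \cap T(k)$ is contained in the $k'$-points of $S$ for a bounded extension $k'/k$, and $S$ has rank $r-1$, Lemma~\ref{reg-2} (applied over $k'$, or directly by the same Frobenius-eigenvalue computation since the eigenvalues of Frobenius on $X(S)$ are roots of unity) gives $\#(S_{\ol{k}}(\ol{k}) \cap T(k)) \leq \prod_{i=1}^{r-1}|q - \zeta_i| \leq (q+1)^{r-1}$. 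Combining, $\#\ker(\lambda : T(k) \to \ol{k}^\times) \leq C_1 (q+1)^{r-1}$, which is the claimed bound with $C = C_1$.

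The main obstacle I anticipate is the descent/rationality bookkeeping: $S_{\ol{k}}$, $Q$, and $\mu$ are a priori only defined over $\ol{k}$, so one must be careful that the relevant subgroups of $T(k)$ are genuinely controlled. The cleanest fix is to avoid choosing a $k$-structure on these auxiliary tori and instead argue entirely with the abstract group $T(k)$ and its characters: the order of $\ker(\lambda|_{T(k)})$ equals $\#T(k) / \#\lambda(T(k))$, and $\lambda(T(k)) \subset \ol{k}^\times$ is a finite cyclic group whose order I can bound \emph{below} by noting that $\lambda(T(k)) \supseteq \mu(T(k))^d$ and $\mu(T(k))$ is large — specifically, since $\mu$ is a primitive character (a basis element of $M$), the composite $T(k) \to Q(\ol{k})$ with $Q$ a rank-one torus is surjective onto $Q(k')$ for a bounded $k'$, so $\#\lambda(T(k)) \geq \#Q(k')/|d| \geq (q-1)/C_2$. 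Then $\#\ker \leq \#T(k)/\#\lambda(T(k)) \leq (q+1)^r \cdot C_2/(q-1) \leq C (q+1)^{r-1}$ by Lemma~\ref{reg-2}, after absorbing the constant $(q+1)/(q-1) \leq 3$ into $C$. This second route sidesteps the descent issues entirely and I would present the argument this way.
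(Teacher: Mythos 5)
Your reduction $\lambda=d\mu$ with $\mu$ primitive and $|d|$ bounded is fine (for semi-simple $G$ one has $\Vert\mu\Vert\ge 1$ since $\langle\mu,\alpha^\vee\rangle\in\Z$ cannot vanish for every coroot, so $|d|\le\Vert\lambda\Vert<n$), but the step that actually carries the content of the lemma is missing in both of your routes. In the route you say you would present, the lower bound $\#\lambda(T(k))\ge (q-1)/C_2$ is asserted via ``the composite $T(k)\to Q(\ol{k})$ is surjective onto $Q(k')$ for a bounded $k'$,'' with no argument. Since $\#\ker(\lambda|_{T(k)})\cdot\#\lambda(T(k))=\#T(k)$, lower-bounding the image is \emph{exactly equivalent} to the statement being proved, so this is circular; and the claim is not even well posed as stated, because $\mu$, $S$ and $Q$ are only defined over an extension $k'$, the finite group $\mu(T(k))\subset\ol{k}^\times$ need not be the group of rational points of anything (for the non-split maximal torus of $\SL_2$ the image of $T(k)$ under the fundamental weight is the group of $(q+1)$-st roots of unity), and there is no a priori reason a group of order roughly $q^r$ should surject onto a group of order roughly $q^{[k':k]}$. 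Your first route stalls at the same point: when $\lambda$ is not defined over $k$ the torus $S$ need not be Frobenius-stable, so $T(k)\cap S(\ol{k})$ is not a fixed-point set of a Frobenius acting on $S$ and the eigenvalue computation of Lemma~\ref{reg-2} does not apply to it directly; and the fallback $T(k)\cap S(\ol{k})\subset S(k')$ only gives $(q^{[k':k]}+1)^{r-1}$, which is far too weak.

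The missing idea is to use the whole Galois orbit of $\lambda$ at once: for $t\in T(k)$ one has $(\sigma\lambda)(t)=\sigma(\lambda(t))$, so $\lambda(t)=1$ forces every Galois conjugate of $\lambda$ to kill $t$. Hence $\ker(\lambda|_{T(k)})$ is precisely the group of $k$-points of the kernel of the $k$-morphism $T\to\Res_{k'/k}(\G_m)$ attached to $\lambda$, a diagonalizable $k$-group whose character group is the cokernel of $\Z[\Gal(k'/k)]\to X(T_{\ol{k}})$, $\sigma\mapsto\sigma\lambda$. Since $\lambda\ne 0$ this cokernel has rank at most $r-1$, and its torsion is bounded by a constant depending only on $n$ and $\dim G$, e.g.\ the least common multiple, over all subsets of the finite set of characters of norm $<n$, of the torsion of the corresponding quotient of $X(T_{\ol{k}})$ (this is the paper's constant $C$; note that your conjugate-intersection $\bigcap_\sigma\sigma(S)$ would require exactly the same bookkeeping for its component group, which you do not carry out). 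Applying Lemma~\ref{reg-2} to the maximal subtorus of this kernel then yields the bound $C(q+1)^{r-1}$. Without this (or an equivalent) descent argument, both versions of your proposal leave the essential inequality unproved.
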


\begin{proof}
For a subset $S$ of $\{\lambda \in X(T_{\ol{k}}) \mid \Vert \lambda \Vert<n \}$ let $C(S)$ denote the
cardinality of the torsion of the quotient of $X(T_{\ol{k}})$ by the subgroup generated by $S$.  Let $C$ be the least
common multiple of the $C(S)$ over all $S$.  Since $C$ only depends upon $n$ and the root datum associated to
$(G_{\ol{k}}, T_{\ol{k}})$, it can be bounded in terms $n$ and $\dim{G}$.

Now, say the character $\lambda$ of $T_{\ol{k}}$ is defined over the extension $k'/k$.  Then $\lambda$ defines a map
$T \to \Res_{k'/k}(\G_m)$, where $\Res$ denotes restriction of scalars.  The kernel of $\lambda$ is a diagonalizable
group scheme whose character group is the cokernel of the map $f:\Z[\Gal(k'/k)] \to X(T_{\ol{k}})$ given by $\sigma
\mapsto \sigma \cdot \lambda$ (note that $\Z[\Gal(k'/k)]$ is the character group of $\Res_{k'/k}(\G_m)$).  The image of
$f$ is spanned by the $\Gal(\ol{k}/k)$ orbit of $\lambda$.  Since $\Vert \cdot \Vert$ is Galois invariant, it
follows that the torsion in the cokernel of $f$ has order at most $C$.  Furthermore, since $\lambda$ is non-zero, we see
that the rank of the cokernel of $f$ is at most $r-1$.

We have thus shown that the kernel of $T \to \Res_{k'/k}(\G_m)$ is an extension of a finite group scheme of order at
most $C$ by a torus of rank at most $r-1$.  It follows from Lemma~\ref{reg-2} that the set of $k$-points of the kernel
--- which is identified with the kernel of $\lambda:T(k) \to \ol{k}^{\times}$ ---
has cardinality at most $C (q+1)^{r-1}$, as was to be shown.
\end{proof}

We now prove the proposition.

\begin{proof}[Proof of Proposition~\ref{reg-1}]
Let $S$ be the set of all non-zero $\lambda \in X(T_{\ol{k}})$ such that $\Vert \lambda \Vert<2n$ and let $N$
be the cardinality of $S$.  We first claim that
\begin{displaymath}
T(k) \not \subset \bigcup_{\lambda \in S} \ker{\lambda}.
\end{displaymath}
Of course, this is equivalent to $T(k) \ne \bigcup_{\lambda \in S} (\ker{\lambda} \cap T(k))$.  To see this, we look at
the cardinality of each side.  The right side is a union of $N$ sets, each of which has cardinality at most
$C (q+1)^{r-1}$, while the left side has cardinality at least $(q-1)^r$ by Lemma~\ref{reg-2}.  Since $N$ and $C$ are
small compared to $q$ (by Lemma~\ref{reg-3} and Lemma~\ref{reg-4}), the claim follows.

Now, pick an element $g \in T(k)$ such that $g \not \in \bigcup_{\lambda \in S} \ker{\lambda}$.  Let $\lambda$ and
$\lambda'$ be distinct elements of $X(T_{\ol{k}})$ each of which has $\Vert \cdot \Vert<n$.  Then $\lambda
-\lambda'$ belongs to $S$.  Thus $(\lambda/\lambda')(g) \ne 1$ and so $\lambda(g) \ne \lambda'(g)$.  Therefore,
$\lambda \mapsto \lambda(g)$ is injective on those $\lambda$ with $\Vert \cdot \Vert<n$.
\end{proof}

\section{Bigness for algebraic representations}

In this section we prove that ``small'' algebraic representations have big image.  The main result is the following:

\begin{proposition}
\label{algbig-1}
Let $k$ be a finite field, let $G/k$ be a reductive group and let $\rho$ be an absolutely irreducible representation of
$G$ on a $k$-vector space $V$.  Assume $\ell=\chr{k}$ is large compared to $\dim{V}$ and $\Vert V \Vert$.  Then
$\rho(G(k))$ is a big subgroup of $\GL(V)$.
\end{proposition}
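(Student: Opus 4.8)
The plan is to verify the four conditions (B1)--(B4) for the finite group $\Gamma := \rho(G(k))$ acting on $V$, leaning on the structure theory assembled in the previous sections. A crucial preliminary reduction: by Proposition~\ref{elem-1} we are free to replace $G(k)$ by a normal subgroup of prime-to-$\ell$ index, and by Proposition~\ref{rep-bg-0} the norm $\Vert V \Vert$ is unchanged under central isogenies and passage to the derived group. So I would first reduce to the case where $G$ is semi-simple and simply connected: replace $G$ by $G^{\der}$ (this uses that $V$ restricted to $G^{\der}$ is still absolutely irreducible, up to a scalar twist, because the center acts by scalars and we may factor it out using Proposition~\ref{elem-2}), then pass to the universal cover $G^{\scn}\to G^{\der}$, whose image on $k$-points has prime-to-$\ell$ index (here $\ell$ large ensures the relevant cokernel, a quotient of $H^1$ of a finite central group scheme, has order prime to $\ell$). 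Throughout, $\ell$ large compared to $\dim V$ forces $\ell$ large compared to $\dim G$ and $\Vert V\Vert$, since an absolutely irreducible faithful-enough representation bounds the dimension of the group.

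Next I would knock off the easy conditions. Condition (B2), absolute irreducibility of $V$ as a $\Gamma$-module, is exactly Proposition~\ref{rep-bg-3} (or \cite[\S1.13]{Lar}): an irreducible algebraic representation of $G_{\ol k}$ of small norm stays irreducible on restriction to $G(k)$. Condition (B1), no nontrivial $\ell$-power quotient: for $G$ semi-simple simply connected with $\ell$ large, $G(k)$ is generated by its Sylow-$\ell$ subgroups (the images of the root subgroups $U_\alpha(k)\cong k$), equivalently $G(k)$ equals its own commutator-and-$\ell$-th-power closure, so it has no nontrivial abelian $\ell$-quotient; more directly, the relevant finite groups of Lie type are quasi-simple in this range, so the only $\ell$-power quotient is trivial. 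Condition (B3), $H^1(\Gamma, \ad^\circ V)=0$: here $\ad^\circ V$ is a $G$-subrepresentation of $\ad V = V\otimes V^\vee$, which has norm $\le 2\Vert V\Vert$, hence small; by Proposition~\ref{rep-bg-3} the functor $\Rep^{(n')}(G_{\ol k})\to \Rep(G(k))$ is fully faithful with Serre image, so $\Ext^1_{G(k)}(k, \ad^\circ V) = \Ext^1_{G_{\ol k}}(k,\ad^\circ V)$, and the latter vanishes because $\Rep^{(n')}(G_{\ol k})$ is semi-simple by Proposition~\ref{rep-bg-1}. (One should check $\ad^\circ V$ lies in $\Rep^{(n')}$; since $\Rep^{(n')}$ is a Serre subcategory and $\ad V$ does, this is automatic. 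Also $k$ — the trivial module — has norm $0$, so it too lies in the subcategory, as needed for the $\Ext$ comparison.)

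The main obstacle is condition (B4), and this is where the work of §4 and §5 enters. Let $W\subset \ad V$ be an irreducible $\Gamma$-submodule. By the full faithfulness and Serre-subcategory properties of $\Rep^{(n')}(G_{\ol k})\to \Rep(G(k))$ (Proposition~\ref{rep-bg-3}), $W$ is (the restriction of) an irreducible algebraic subrepresentation of $\ad V$, and by semi-simplicity $\ad V = \bigoplus W_i$ as $G$-modules over $\ol k$. Now invoke Proposition~\ref{reg-1}: choose $g\in T(k)$ (for a maximal torus $T\subset G$ over $k$) so that the weights $\lambda$ of $T$ on $V$, which all satisfy $\Vert\lambda\Vert < \dim V$ hence $\Vert\lambda\Vert< n$, take distinct values $\lambda(g)\in\ol k^\times$. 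Then each generalized eigenspace $V_{g,\lambda(g)}$ is exactly the weight space $V_\lambda$, which — I claim — is one-dimensional for a suitable choice: more carefully, take $\lambda$ to be an extremal (e.g. highest, or lowest) weight of $V$, so that $\dim V_\lambda = 1$ (the highest weight space of an irreducible is one-dimensional, using $\ell$ large via Proposition~\ref{rep-bg-2}). Then $V_{g,\lambda(g)} = V_\lambda$ is one-dimensional. It remains to produce $f\in W$ whose composite $V_\lambda \hookrightarrow V \xrightarrow{f} V \twoheadrightarrow V_\lambda$ is nonzero, i.e. such that the $(\lambda,\lambda)$-matrix entry of $f$ is nonzero. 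Consider the $G$-equivariant projection $\Phi: \ad V \to \End(V_\lambda) = \ol k$ given by "$(\lambda,\lambda)$-entry" — wait, this is only $T$-equivariant, not $G$-equivariant; instead argue as in the proof of Proposition~\ref{elem-3}: the map $\ad V \to \bigoplus_{\mu} \End(V_\mu)$ over all weights $\mu$ with $\dim V_\mu = 1$ is injective on each irreducible summand $W_i$ (it is nonzero there because a nonzero endomorphism must have a nonzero entry somewhere, and irreducibility of $W_i$ under $\Gamma$ — equivalently, the fact that the weight-$0$ part of $W_i$ surjects appropriately — forces... ) — the cleanest route is: since $W$ contains a nonzero element, and $W$ is $G$-irreducible, the weights of $W$ include the difference of two weights of $V$; choosing $f\in W$ a weight vector of weight $0$ (i.e. in the zero-weight space of $W$, which is nonzero when $W$ is not, say, a "small" non-self-paired piece — and $\ad^\circ V$ together with the trivial summand $k\subset \ad V$ exhaust $\ad V$, and for the trivial summand one takes $f=\id$) one gets $f$ preserving each $V_\mu$, and then nonvanishing of $f|_{V_\lambda}$ for a well-chosen extremal $\lambda$ follows because the zero-weight space of $\ad V$ acts on the one-dimensional $V_\lambda$ through the character "evaluate at $\lambda$" on the torus, and these characters separate the summands $W_i$ by the same injectivity argument. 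I expect the bookkeeping here — matching up $\Gamma$-irreducible submodules with $G$-irreducible ones, extracting a zero-weight vector, and ensuring the extremal weight $\lambda$ detects the chosen $W$ — to be the delicate part, handled exactly by combining Proposition~\ref{reg-1} (to make $V_{g,\lambda(g)}$ one-dimensional), Proposition~\ref{rep-bg-2} (one-dimensionality of the highest weight space), and the injectivity trick from Proposition~\ref{elem-3}.
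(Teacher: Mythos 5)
Your overall skeleton matches the paper's: reduce to a semi-simple simply connected group with finite kernel, get (B1) from the structure of $G(k)$, (B2) from Proposition~\ref{rep-bg-3}, (B3) from semi-simplicity of small-norm representations plus full faithfulness, and attack (B4) by choosing $g\in T(k)$ via Proposition~\ref{reg-1} so that the generalized eigenspace at the highest weight $\lambda_0$ is the one-dimensional highest weight space $V_0$. (Minor glosses: the reduction also requires dividing by the reduced identity component of $\ker\rho$ and invoking Lang's theorem so that $\dim G$ is bounded by $\dim V$, and for (B3) one still needs the inflation step from $G(k)$ to $\rho(G(k))$ through the kernel, which acts trivially on $\ad^{\circ}{V}$; your first formulation of (B1) via generation by Sylow-$\ell$ subgroups does not by itself exclude $\ell$-power quotients, though the perfectness/quasi-simplicity version you also give does.)

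The genuine gap is the heart of (B4): you must show that \emph{every} irreducible $G$-submodule $W$ of $\ad{\ol{V}}$ has non-zero projection to $\End(\ol{V}_0)$, and your sketch does not establish this. The appeal to the injectivity trick of Proposition~\ref{elem-3} is circular: there, injectivity of $\ad{V}\to\bigoplus\End(V_{g,\alpha})$ on irreducible submodules is \emph{deduced from} (B4), so it cannot be used to prove (B4). Likewise, picking a weight-zero vector $f\in W$ only tells you $f$ is block-diagonal with respect to the weight decomposition; whether its block at $V_{\lambda_0}$ is non-zero is precisely the point at issue, and the claim that the zero-weight space of $\ad{V}$ acts on $V_{\lambda_0}$ "through the character evaluate at $\lambda_0$" has no content — that zero-weight space is $\bigoplus_\mu \End(V_\mu)$ and its map to $\End(V_{\lambda_0})$ is just a projection, which a priori could kill $W$. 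The paper devotes a separate lemma to exactly this statement, proved by a two-step computation in the universal enveloping algebra: using admissible bases and products of raising operators $X_{\ul{\alpha}}$ (together with Proposition~\ref{rep-bg-2}, which guarantees only the highest weight line is killed by all positive root vectors) one first shows the projection of $W$ to $V_0\otimes V^*$ is non-zero, and then, applying the corresponding lowering operators $Y_{\ul{\alpha}}$ to the second factor, that the projection to $V_0\otimes V_0^*$ is non-zero. Some argument of this kind (or an equivalent one, e.g.\ showing the $G$-submodule generated by $V_0\otimes V_0^*$ is all of $\ad{\ol{V}}$ and pairing via the trace form) is indispensable, and it is exactly the piece your proposal defers to "bookkeeping."
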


\begin{proof}
Let $G_1=G^{\der}$, a semi-simple group.  The group $G_1(k)$ is a normal subgroup of $G(k)$.  The quotient is a
subgroup of $(G/G_1)(k)$, which is prime to $\ell$ since $G/G_1$ is a torus.  Thus it suffices by
Proposition~\ref{elem-1} to show that $\rho(G_1(k))$ is big.

Let $H$ be the kernel of $\rho \vert_{G_1}$ and let $H'$ be the reduced subscheme of the identity component of $H$.
Then $H'$ is a closed normal subgroup of $G_1$ by Lemma~\ref{rep-bg-8} and is smooth, since it is reduced.  Put
$G_2=G_1/H'$.  The map $\rho$ factors through $G_2$.  By Lang's theorem, the natural map $G_1(k) \to G_2(k)$ is
surjective.  Thus $\rho(G_1(k))=\rho(G_2(k))$ and so it is enough to show that $\rho(G_2(k))$ is big.  Note that the
kernel of $\rho \vert_{G_2}$ is finite, and so the dimension of $G_2$ can be bounded in terms of the dimension of $V$.

Let $G_3$ be the universal cover of $G_2$.  The image of $G_3(k)$ in $G_2(k)$ is normal and the quotient is a
subgroup of $H^1(k, Z)$, where $Z=\ker(G_3 \to G_2)$.  Now, the order of $Z$ divides the order of the center of
$(G_3)_{\ol{k}}$, which can be computed in terms of the root datum of $(G_3)_{\ol{k}}$.  Since the dimension of
$G_3$ is bounded in terms of that of $V$ and there are only finitely many root data of a given dimension, it
follows that the order of $Z$ can be bounded in terms of the dimension of $V$.  Thus since $\ell$ is large
compared to the dimension of $V$, we find that the order of $Z$ is prime to $\ell$.  It follows that the index of
$G_3(k)$ in $G_2(k)$ is prime to $\ell$.  Thus it is enough to show that $\rho(G_3(k))$ is big.

We have thus shown that if $\rho(G_3(k))$ is big then so is $\rho(G(k))$.  Now, since $\rho$ is
an absolutely irreducible representation of $G$ the center of $G$ acts by a character under $\rho$.
Thus the restriction of $\rho$ to $G_1$ is still absolutely irreducible.  Therefore $\rho$ defines an absolutely
irreducible representation of $G_3$.  Furthermore, the norm of $V$ as a representation of $G$ is equal to the norm
of $V$ as a representation of $G_3$ by Proposition~\ref{rep-bg-0}.  We have thus shown that it suffices to prove the
proposition when $G$ is a simply connected, semi-simple group and $\ker{\rho}$ is a finite subgroup of $G$.  We now
begin the proof proper.

As $G$ is semi-simple and simply connected, it is a product of simple simply connected groups $G_i$.  Each $G_i$,
being simple and simply connected, is of the form $\Res_{k_i/k}(G_i')$ where $k_i$ is a finite extension of $k$ and
$G_i'$ is an absolutely simple simply connected group over $k_i$ (this is explained in \S 6.21(ii) of \cite{BoT}).  We
have $G_i(k)=G_i'(k_i)$.  Let $Z_i'$ be the center of $G_i'$.  By \cite[Thm.~11.1.2]{Carter} and
\cite[Thm.~14.4.1]{Carter} the group $G_i'(k_i)/Z_i'(k_i)$ is
simple and non-abelian.  As we have previously explained, the order of $Z_i'$ can be bounded by $\dim{G_i'}$, which is
in turn bounded by $\dim{V}$.  Thus, by our assumptions, the order of $Z_i'$ is small compared to $\ell$.  We therefore
find that the Jordan-Holder constituents of $G(k)$ are all simple groups of Lie type and abelian groups of the form
$\Z/p \Z$ with $p$ a prime that is small compared to $\ell$.  In particular, $\Z/\ell \Z$ is not a Jordan-Holder
constituent of $G(k)$ and therefore not a constituent of the quotient $\rho(G(k))$.  Thus $\rho(G(k))$ does not have a
quotient of $\ell$-power order, as such a quotient would be solvable and have a quotient isomorphic to $\Z/\ell \Z$.
This shows that $\rho(G(k))$ satisfies (B1).

Proposition~\ref{rep-bg-3} shows that $V$ is absolutely irreducible as a representation of $G(k)$ and so $\rho(G(k))$
satisfies (B2).

We now examine $H^1(\rho(G(k)), \ad^{\circ}{V})$.  By Proposition~\ref{rep-bg-1} and Proposition~\ref{rep-bg-3}, we have
$H^1(G(k), \ad{V})=0$ since this group classifies self-extensions of $V$ and any such extension is semi-simple.  Since
$\ell$ is large compared to $\dim{V}$ this implies $H^1(G(k), \ad^{\circ}{V})=0$, as $\ad^{\circ}(V)$ is a
summand of $\ad{V}$.  Let $H$ be the kernel of $\rho$.  We have an exact sequence
\begin{displaymath}
1 \to H(k) \to G(k) \to \rho(G(k)) \to 1
\end{displaymath}
and thus we get an injection
\begin{displaymath}
H^1(\rho(G(k)), (\ad^{\circ}{V})^{H(k)}) \to H^1(G(k), \ad^{\circ}{V}).
\end{displaymath}
The group on the right vanishes and so the group on the left does too.  Since $H(k)$ acts trivially on $V$, it acts
trivially on $\ad^{\circ}{V}$.  Thus $H^1(\rho(G(k)), \ad^{\circ}{V})=0$ and so $\rho(G(k))$ satisfies (B3).

We now turn to condition (B4).  As every reductive group over a finite field is quasi-split (see
\cite[Prop.~16.6]{Borel}), we can pick a Borel subgroup $B$ of $G$ defined over $k$.  Let $T$ be a maximal torus of
$B$, which is automatically a maximal torus of $G$, and let $U$ be the
unipotent radical of $B$.  By Proposition~\ref{reg-1} we can pick an element $g$ of $T(k)$ such that $\lambda(g) \ne
\lambda'(g)$ for any two distinct characters $\lambda$ and $\lambda'$ of $T_{\ol{k}}$ which are weights of $V_{\ol{k}}$.
Now, the space $V_0=V^U$ is one dimensional and stable under the action of $T$.  Let $\lambda_0:T \to \G_m$ give the
action of $T$ on $V_0$.  Then $\lambda_0$ is the highest weight of $V_{\ol{k}}$, and thus occurs as a weight in this
representation with multiplicity one.  Put $\alpha=\lambda_0(g)$, an element of $k^{\times}$.  We then have
$V_{g, \alpha}=V_0$, and so the $\alpha$-generalized eigenspace of $g$ is one dimensional.  To show that the image of
$G(k)$ is big, it thus suffices to show that any irreducible $G(k)$-submodule of $\ad{V}$ has non-zero projection to
$\ad{V_0}$.

Thus let $W$ be an irreducible $G(k)$-submodule of $\ad{V}$.  To show that the image of $W$ in $\ad{V_0}$
is non-zero it suffices to show that the image of $\ol{W}$ in $\ad{\ol{V}_0}$ is, where the bar denotes
$-\otimes \ol{k}$.  Let $\ol{U}$ be an irreducible $G(k)$-submodule of $\ol{W}$.  It is enough, of course, to show that
the image of $\ol{U}$ in $\ad{\ol{V}_0}$ is non-zero.  Now, $\ol{U}$ is an irreducible $G(k)$-submodule
of $\ad{\ol{V}}$, and so, by Proposition~\ref{rep-bg-3}, it is an irreducible $G$-submodule of $\ad{\ol{V}}$.  Thus to
show that the image of $G(k)$ in $\GL(V)$ satisfies (B4) it is enough to prove the following:  \emph{every irreducible
$G$-submodule of $\ad{\ol{V}}$ has non-zero image in $\ad{\ol{V}_0}$.}  This is established in the
following lemma.
\end{proof}

\begin{lemma}
Let $k$ be the algebraic closure of a finite field, let $G/k$ be a semi-simple group and let $(\rho, V)$ be an
irreducible representation of $G$ with $\dim{V}$ and $\Vert V \Vert$ small compared to $\chr{k}$.  Let $V_0$
be the highest weight space of $V$.  Then every irreducible submodule of $\ad{V}$ has non-zero projection to
$\ad{V_0}$.
\end{lemma}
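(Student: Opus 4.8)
The plan is to identify the projection $\ad{V}\to\ad{V_0}$ with pairing against a single explicit element $\pi_0\in\ad{V}$ under the trace form, and then to show that $\pi_0$ generates $\ad{V}$ as a $G$-module, which forces every nonzero submodule to pair nontrivially with it.

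First I would reduce to the case where $\dim{G}$ is small compared to $\chr{k}$, by replacing $G$ with its image in $\GL(V)$ modulo a smooth connected normal subgroup exactly as in the proof of Corollary~\ref{rep-bg-7}; this changes neither $\ad{V}$ nor its $G$-module structure. Fix a Borel $B=TU$ with opposite unipotent radical $U^-$. By Proposition~\ref{rep-bg-2} the highest weight $\lambda_0$ of $V$ occurs with multiplicity one, so $V_0=V^U$ is the one-dimensional $\lambda_0$-weight space; fix a generator $v_0$ and let $\phi_0\in V^*$ be the $T$-eigenfunctional of weight $-\lambda_0$ dual to $v_0$ (so $\phi_0$ vanishes on the other weight spaces). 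Set $\pi_0=v_0\otimes\phi_0\in V\otimes V^*=\ad{V}$, the idempotent projecting $V$ onto $V_0$ along the remaining weight spaces. A direct computation shows that for $f\in\ad{V}$ the projection of $f$ to $\ad{V_0}=\End(V_0)\cong k$ is the scalar $\phi_0(f(v_0))=\tr(f\pi_0)=B(f,\pi_0)$, where $B(f,h)=\tr(fh)$ is the trace form; and $B$ is a non-degenerate symmetric $\GL(V)$-invariant (in particular $G$-invariant) bilinear form on $\ad{V}$.

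The heart of the argument is the claim that $\pi_0$ generates $\ad{V}$ as a $G$-module. Since $-\lambda_0$ is the lowest weight of the irreducible module $V^*$, the vector $\phi_0$ is fixed by $U^-$; using this together with density of the image of $U\times T\times U^-\to G$ and irreducibility of $V^*$ one gets $\mathrm{span}(U\phi_0)=V^*$, and since $U$ fixes $v_0$ this yields $\mathrm{span}(U\pi_0)=v_0\otimes V^*$. Likewise, $v_0$ is fixed by $U$, and density of the image of $U^-\times T\times U\to G$ with irreducibility of $V$ gives $\mathrm{span}(U^-v_0)=V$. Applying $U^-$ to $v_0\otimes V^*$ and using that each $u^-\in U^-$ acts invertibly on the $V^*$-factor produces $\mathrm{span}(U^-v_0)\otimes V^*=V\otimes V^*$. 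Hence the $G$-submodule generated by $\pi_0$ is all of $\ad{V}$.

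Finally, let $W$ be any nonzero $G$-submodule of $\ad{V}$ (in particular, any irreducible one). Its orthogonal complement $W^\perp$ with respect to $B$ is again a $G$-submodule. If the projection of $W$ to $\ad{V_0}$ were zero, then $B(W,\pi_0)=0$, so $\pi_0\in W^\perp$; since $W^\perp$ is a submodule containing $\pi_0$, the previous paragraph forces $W^\perp=\ad{V}$, whence $W\subseteq(\ad{V})^\perp=0$ by non-degeneracy of $B$, a contradiction. Thus $W$ has non-zero projection to $\ad{V_0}$. I expect the only real subtlety to be the module-generation claim, specifically the passage from ``lowest (resp. highest) weight vector of an irreducible'' to ``spans the module after applying $U$ (resp. $U^-$),'' which I would handle by the standard density-of-the-big-cell argument; the rest is formal.
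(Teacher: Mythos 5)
Your proof is correct, but it takes a genuinely different route from the paper's. The paper argues combinatorially with the enveloping algebra: it fixes products of simple-root raising operators $X_{\ul{\alpha}}$ and lowering operators $Y_{\ul{\alpha}}$, builds ``admissible bases'' of each weight space, and in two steps shows that an irreducible submodule $W\subset\ad{V}$ has non-zero image first in $V_0\otimes V^*$ and then in $V_0\otimes V_0^*$; the injectivity of the raising maps rests on Proposition~\ref{rep-bg-2}, which is where the hypothesis that $\chr{k}$ is large relative to $\Vert V\Vert$ and $\dim{G}$ enters. You instead dualize: the projection to $\ad{V_0}$ is the trace pairing against the idempotent $\pi_0=v_0\otimes\phi_0$, and the whole lemma reduces to the claim that $\pi_0$ generates $\ad{V}$ as a $G$-module, which you prove by the density-of-the-big-cell argument ($U\pi_0$ spans $v_0\otimes V^*$ because $\phi_0$ is the $U^-$-fixed lowest weight vector of the irreducible $V^*$, and then translating by $U^-$ fills out $V\otimes V^*$); non-degeneracy and $G$-invariance of the trace form on $\End(V)$ finish it. Each step of yours checks out: the identification of the projection with $\tr(f\pi_0)$, the invariance of $W^\perp$, and the spanning arguments are all sound. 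What your approach buys is notable: it is entirely group-theoretic (no enveloping algebra), and it does not actually use the assumption that $\dim{V}$ and $\Vert V\Vert$ are small compared to $\chr{k}$ --- the one-dimensionality of $V^U$ and of the lowest weight line of $V^*$ holds for any irreducible in any characteristic, so your citation of Proposition~\ref{rep-bg-2} (and hence your opening reduction to $\dim{G}$ small) is a convenience rather than a necessity; the lemma as you prove it holds for every irreducible representation of a connected semi-simple group. The paper's method, by contrast, is tied to the large-characteristic regime but stays within the toolkit it has already set up for the bigness argument.
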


\begin{proof}
By the same reductions used in the proof of the proposition we can assume that $\ker{\rho}$ is finite, and so
$\dim{G}$ is bounded in terms of $\dim{V}$.  Pick a maximal torus of $G$
and a system of positive roots.  For a weight $\lambda$ let $V_{\lambda}$ denote the $\lambda$-weight space of $V$.  Let
$\lambda_0$ be the highest weight for $V$ and let $V_0$ be the $\lambda_0$-weight space.  For a root $\alpha$ we pick an
element $X_{\alpha}$ of $\Lie(G)$ which spans the $\alpha$ root space.  Any positive element $\lambda$ of the root
lattice has a unique expression $\lambda=\sum n_i \alpha_i$ where the $n_i$ are non-negative integers and the $\alpha_i$
are the simple roots.  We let $\len{\lambda}$ be the sum of the $n_i$.

By a \emph{simple tuple} we mean an ordered tuple $\ul{\alpha}=(\alpha_1, \ldots, \alpha_n)$ consisting of simple roots.
For such a tuple $\ul{\alpha}$ we put $\vert \ul{\alpha} \vert=\sum \alpha_i$.  Note that $\len{\vert \ul{\alpha} \vert}
=n$.  We let $X_{\ul{\alpha}}$ (resp.  $Y_{\ul{\alpha}}$) denote the product $X_{\alpha_1} \cdots X_{\alpha_n}$ (resp.
$X_{-\alpha_1} \cdots X_{-\alpha_n}$), regarded as an element of the universal enveloping algebra.

Given a weight $\lambda$ for which $V_{\lambda}$ is non-zero the difference $\lambda_0-\lambda$ is positive and lies in
the root lattice.  For a simple tuple $\ul{\alpha}$ with $\vert \ul{\alpha} \vert=\lambda_0-\lambda$ the operator
$X_{\ul{\alpha}}$ maps $V_{\lambda}$ into $V_0$.  The resulting map
\begin{displaymath}
V_{\lambda} \to \bigoplus_{\vert \ul{\alpha} \vert=\lambda_0-\lambda} V_0
\end{displaymath}
is injective.  (Proof:  By Proposition~\ref{rep-bg-2}, the only vector annihilated by all of the $X_{\alpha}$ is the
highest weight vector.  Thus if $\lambda \ne \lambda_0$ and $v$ belongs to $V_{\lambda}$ then we can find some $\alpha$
such that $X_{\alpha} v$ is non-zero.  We can thus move $v$ closer to the $\lambda_0$-weight space.  By induction on
$\len(\lambda_0-\lambda)$ we can therefore find $\alpha_1, \ldots, \alpha_n$ such that $X_{\alpha_n} \cdots X_{\alpha_1}
v$ is non-zero and belongs to $V_0$.)  We can thus pick $m=\dim{V_{\lambda}}$ simple tuples $\ul{\alpha}_1, \ldots,
\ul{\alpha}_m$ for which the resulting map is injective.  We can then pick a basis $\{v_i\}$ of $V_{\lambda}$ such that
$v_i$ belongs to the kernel of $X_{\ul{\alpha}_j}$ whenever $i \ne j$ but does not belong to the kernel of
$X_{\ul{\alpha}_i}$. We call such a basis \emph{admissible}.  Note that in $V^*$ the space $V_0^*$ is a lowest weight
space.  The same process as above, but with $X_{\ul{\alpha}}$ replaced by $Y_{\ul{\alpha}}$, yields the notion of an
admissible basis for $V_{\alpha}^*$.

Let $W$ be an irreducible submodule of $\ad{V}$.  Let $p:W \to V_0 \otimes V^*$ be the natural projection.  We first
show that $p(W)$ is non-zero. Among those weights $\lambda$ for which the projection $W \to V_{\lambda} \otimes V^*$ is
non-zero, pick one for which $\len(\lambda_0-\lambda)$ is minimal.  Let $w$ be an element of $W$ which has non-zero
projection to $V_{\lambda} \otimes V^*$ and write
\begin{displaymath}
w=\left( \sum v_i \otimes v_i^* \right)+ v'
\end{displaymath}
where $\{v_i\}$ is an admissible basis of $V_{\lambda}$, the $v_i^*$ belong to $V^*$ and $v'$ belongs to the complement
of $V_{\lambda} \otimes V^*$.  Let $\ul{\alpha}_i$ be the simple tuples yielding the basis $v_i$.  Let 1 denote an index
such that $v_1^*$ is non-zero.  We then have
\begin{equation}
\label{eq1}
p(X_{\ul{\alpha}_1} w)=(X_{\ul{\alpha}_1} v_1) \otimes v_1^*
\end{equation}
(explained below).  Since the right side is non-zero, it follows that $p(W)$ is non-zero.

We now explain why \eqref{eq1} holds.  Recall that if $X$ is an element of $\Lie(G)$ then the formula for how $X$ acts
on a pure tensor is
\begin{displaymath}
X(v \otimes w)=(Xv) \otimes w + v \otimes (Xw).
\end{displaymath}
Thus when we apply $X_{\ul{\alpha}_1}$ to a pure tensor $v \otimes w$ we get a sum of terms and in each term some
$X_{\alpha_{1, i}}$ land on $v$ and some land on $w$.  We now examine $X_{\ul{\alpha}_1} v$.  First consider the $v'$
part.  Write $v'=\sum v'_i \otimes u'_i$ where $v'_i$ has weight $\mu_i$.  If $\ul{\alpha}'$ is any sub-sequence of
$\ul{\alpha}_1$ then $X_{\ul{\alpha}'} v_i'$ lands in the $\mu_i+\vert \ul{\alpha}' \vert$ weight space.  If
$\ul{\alpha}'$ is not all of $\ul{\alpha}_1$ then this cannot equal $\lambda_0$ for length reasons.  Even if
$\ul{\alpha}'$ is all of $\ul{\alpha}_1$ this is not equal to $\lambda_0$ since $\lambda_0=\vert \ul{\alpha}_1 \vert
+\lambda$ and no $\mu_i$ is equal to $\lambda$.  Thus $p(X_{\ul{\alpha}_1} v')=0$.  We now consider the first term in
$v$.  The same length argument shows that the only way to land in $V_0 \otimes V^*$ is to have all of
$X_{\ul{\alpha}_1}$ land on the first factor.  However, $X_{\ul{\alpha}_1}$ kills $v_i$ for $i \ne 1$.  We have thus
proved \eqref{eq1}.

We now show that the image of the projection $q:W \to V_0 \otimes V_0^*$ is non-zero.  Among those weights $\lambda$
for which the projection $W \to V_0 \otimes V_{\lambda}^*$ is non-zero, pick one for which $\len(\lambda_0-\lambda)$ is
minimal.  (Such a weight exists by the previous paragraphs.)  Let $w$ be an element of $W$ which has non-zero projection
to $V_0 \otimes V_{\lambda}^*$.  We may as well assume that $w$ has weight $\lambda_0-\lambda$.  We can thus write
\begin{displaymath}
w=\left( \sum v_i \otimes v_i^* \right) +v'
\end{displaymath}
where the $v_i$ belong to $V_0$, $\{v_i^*\}$ is an admissible basis of $V_{\lambda}^*$ and $v'$ belongs to the
complement of $V_0 \otimes V^*$.  Let $\ul{\alpha}_i$ be the simple tuples yielding the basis $v_i^*$.  Let 1 denote
an index such that $v_1$ is non-zero.  We then have
\begin{equation}
\label{eq2}
q(Y_{\ul{\alpha}_1} w)=v_1 \otimes (Y_{\ul{\alpha}_1} v_1^*)
\end{equation}
(explained below).  Since the right side is non-zero, it follows that $q(W)$ is non-zero, proving the proposition.

We now explain \eqref{eq2}.  The point is that, since $Y_{\ul{\alpha}_1}$ is a lowering operator, the only way for a
term of $Y_{\ul{\alpha}_1} w$ to have its first factor in $V_0$ is if $Y_{\ul{\alpha}_1}$ lands entirely on the second
factor.  Of course, none of the terms in $v'$ have their first factor in $V_0$ to begin with, so they will not after
applying $Y_{\ul{\alpha}_1}$.  As for the first term, $Y_{\ul{\alpha}_1}$ kills $v_i^*$ for $i \ne 1$.  This proves
\eqref{eq2}.
\end{proof}

\section{Bigness for nearly hyperspecial groups}

Throughout this section $K$ denotes a finite extension of $\Q_{\ell}$, $\ms{O}_K$ its ring of integers and $k$ its
residue field.

We begin by recalling some definitions.  Let $G/K$ be a reductive group.  The group $G$ is \emph{quasi-split} if it has
a Borel subgroup.  It is \emph{unramified} if it is quasi-split and it splits over an unramified extension of $K$.  A
subgroup $\Gamma \subset G(K)$ is \emph{hyperspecial} if there exists a reductive group $\wt{G}/\ms{O}_K$ with generic
fiber $G$ such that $\Gamma=\wt{G} (\ms{O}_K)$.  Hyperspecial subgroups of $G(K)$ are maximal compact subgroups.  The
group $G(K)$ has a hyperspecial subgroup if and only if $G$ is unramified.  Let $G^{\ad}$ be the adjoint group of $G$
and let $G^{\scn}$ be the simply connected cover of $G^{\ad}$.  We have maps
\begin{displaymath}
\xymatrix{
G \ar[r]^-{\sigma} & G^{\ad} & G^{\scn} \ar[l]_>>>>>>{\tau}. }
\end{displaymath}
We say that a subgroup $\Gamma \subset G(K)$ is \emph{nearly hyperspecial} if $\tau^{-1}(\sigma(\Gamma))$ is a
hyperspecial subgroup of $G^{\scn}(K)$.  (This is not a standard term.)

The purpose of this section is to prove the following proposition:

\begin{proposition}
\label{nhyper-5}
Let $\rho:\Gamma \to \GL_n(K)$ be a continuous representation of the profinite group $\Gamma$.  Assume:
\begin{itemize}
\item The characteristic $\ell$ of $k$ is large compared to $n$.
\item The restriction of $\rho$ to any open subgroup of $\Gamma$ is absolutely irreducible.
\item The index of $G^{\circ}$ in $G$ is small compared to $\ell$, where $G$ is the Zariski closure of $\rho(\Gamma)$.
\item The subgroup $\rho(\Gamma) \cap G^{\circ}(K)$ of $G^{\circ}(K)$ is nearly hyperspecial.
\end{itemize}
Then $\ol{\rho}(\Gamma)$ is a big subgroup of $\GL_n(k)$.
\end{proposition}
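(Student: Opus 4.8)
The plan is to reduce to the case where $G$ is connected, build an integral (hyperspecial) model $\wt{\mc G}/\ms O_K$ of the simply connected cover carrying the reduction of the defining representation, use Proposition~\ref{algbig-1} over the residue field to see that the image of $\wt{\mc G}(k)$ is big, and finally compare that group with $\ol\rho(\Gamma)$, the discrepancy being prime to $\ell$.

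First, since $\rho$ is absolutely irreducible it is semisimple, so $G^{\circ}$ is reductive. Put $\Gamma_1=\rho^{-1}(G^{\circ}(K))$; then $[\Gamma:\Gamma_1]=[G:G^{\circ}]$ is prime to $\ell$, and $\ol\rho(\Gamma_1)$ is normal of prime-to-$\ell$ index in $\ol\rho(\Gamma)$, so by Proposition~\ref{elem-1} it suffices to show $\ol\rho(\Gamma_1)$ is big. The Zariski closure of $\rho(\Gamma_1)$ is $G^{\circ}$, the group $\rho(\Gamma_1)=\rho(\Gamma)\cap G^{\circ}(K)$ is nearly hyperspecial by hypothesis, and $\rho$ is still absolutely irreducible on every open subgroup of $\Gamma_1$, so we may assume $G$ is connected reductive. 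Write $\sigma:G\to G^{\ad}$, let $\pi:G^{\scn}\to G^{\der}$ be the simply connected cover, so $\tau:G^{\scn}\to G^{\ad}$ is the composite $G^{\scn}\xrightarrow{\pi}G^{\der}\hookrightarrow G\xrightarrow{\sigma}G^{\ad}$. As $\rho(\Gamma)$ is Zariski-dense and $V=K^n$ is absolutely irreducible, the centre of $G$ acts by a character, so $V|_{G^{\der}}$ — hence $V^{\scn}:=\pi^*(V|_{G^{\der}})$ — is absolutely irreducible, and $\Vert V^{\scn}\Vert=\Vert V\Vert$ by Proposition~\ref{rep-bg-0}.

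By hypothesis $\Delta:=\tau^{-1}(\sigma(\rho(\Gamma)))$ is a hyperspecial subgroup of $G^{\scn}(K)$, so $\Delta=\wt{\mc G}(\ms O_K)$ for a semisimple simply connected group scheme $\wt{\mc G}/\ms O_K$ with $\wt{\mc G}_K=G^{\scn}$. Since $G^{\scn}$ is unramified, $V^{\scn}$ is the generic fibre of the Borel--Weil representation $\Lambda^{\scn}:=V_{\ms O_K,\lambda}$ of $\wt{\mc G}$ attached to its highest weight $\lambda$ (if $\wt{\mc G}$ is not split over $\ms O_K$, construct this over a finite unramified extension and descend, using that Galois fixes $\lambda$ and preserves $\Vert\cdot\Vert$). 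By Proposition~\ref{rep-bg-4}, applied with $\ell$ large compared to $\rk\Lambda^{\scn}=n$, the reduction $(\Lambda^{\scn})_k$ is an absolutely irreducible $\wt{\mc G}_k$-module with $\Vert(\Lambda^{\scn})_k\Vert=\Vert V\Vert$ small compared to $\ell$. Now Proposition~\ref{algbig-1}, applied to the semisimple group $\wt{\mc G}_k/k$ and its absolutely irreducible representation on $(\Lambda^{\scn})_k$, shows that the image of $\wt{\mc G}_k(k)=\wt{\mc G}(k)$ in $\GL((\Lambda^{\scn})_k)$ is big. As $\wt{\mc G}$ is smooth over the Henselian ring $\ms O_K$, the reduction map $\Delta\to\wt{\mc G}(k)$ is surjective; composing with the $\wt{\mc G}$-action on $\Lambda^{\scn}$, whose generic fibre is the action of $\pi(\Delta)\subset G^{\der}(K)\subset\GL_n(K)$ on $V$, we conclude that the reduction mod $\ell$ of $\pi(\Delta)$, call it $\Pi\subset\GL_n(k)$, is a big subgroup and $(\Lambda^{\scn})_k$ is absolutely irreducible over $\Pi$.

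It remains to compare $\Pi$ with $\ol\rho(\Gamma)$. From $\sigma(\pi(\Delta))=\tau(\Delta)\subseteq\sigma(\rho(\Gamma))$, the identity $\sigma^{-1}(\sigma(\rho(\Gamma)))(K)=\rho(\Gamma)\cdot Z(G)(K)$, the fact that $Z(G)(K)$ acts on $V$ by scalars, and the compactness of $\pi(\Delta)$ and $\rho(\Gamma)$ (so their determinants are units), one gets $\pi(\Delta)\subseteq\rho(\Gamma)\cdot\ms O_K^{\times}$ inside $\GL_n(K)$; hence $\tilde\Gamma:=\rho(\Gamma)\cdot\ms O_K^{\times}$ is a compact group containing $\rho(\Gamma)$ and $\pi(\Delta)$, and we fix a $\tilde\Gamma$-stable lattice $\Lambda_0$ and reduce with respect to it. Since $(\Lambda_0)_k$ and $(\Lambda^{\scn})_k$ are reductions of the same $\pi(\Delta)$-module and the latter is irreducible, $(\Lambda_0)_k$ is irreducible over $\pi(\Delta)$, hence over its reduction; so the reduction of $\pi(\Delta)$ is a conjugate of $\Pi$, the reduction of $\rho(\Gamma)$ is the genuine $\ol\rho(\Gamma)$, and $(\Lambda_0)_k$ is absolutely irreducible over $\ol\rho(\Gamma)$ (this yields (B2) for free). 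Moreover $\pi(\Delta)$ is normalized by $\rho(\Gamma)$, because $\Delta=\tau^{-1}(\sigma(\rho(\Gamma)))$ is stable under the natural action of $G(K)$ on $G^{\scn}(K)$ and $\pi$ is equivariant, so after reduction $k^{\times}\Pi$ is normal in $k^{\times}\ol\rho(\Gamma)$. Finally $[k^{\times}\ol\rho(\Gamma):k^{\times}\Pi]$ divides $[\tilde\Gamma:\ms O_K^{\times}\pi(\Delta)]$, which is assembled from $[\sigma(\rho(\Gamma)):\tau(\Delta)]$ — dividing $[G^{\ad}(K):\tau(G^{\scn}(K))]$, which injects into $H^1(K,Z(G^{\scn}))$ — and from the finite group $\pi(\Delta)\cap Z(G^{\der})(K)$; as $\lvert Z(G^{\scn})\rvert$ divides the connection index of the root datum of $G^{\scn}$ and is thus bounded in terms of $n$, all of these have order prime to $\ell$. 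Hence $k^{\times}\Pi$ is big (Proposition~\ref{elem-2}), so $k^{\times}\ol\rho(\Gamma)$ is big (Proposition~\ref{elem-1}), so $\ol\rho(\Gamma)$ is big (Proposition~\ref{elem-2}).

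The first half is essentially bookkeeping on top of the earlier sections; the genuinely delicate step is the comparison in the last paragraph, where one must pass between a reductive group and its simply connected cover, between different $G(K)$-stable lattices, and between $\ell$-adic and residual images, and then verify that every group measuring the discrepancy — centres of simply connected covers, the relevant local Galois $H^1$, component groups — has order prime to $\ell$. This is exactly where the hypotheses ``$\ell$ large compared to $n$'' and ``nearly hyperspecial'' (rather than merely bounded) image enter essentially.
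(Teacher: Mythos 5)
Your argument is correct in substance and, while the overall skeleton matches the paper's (reduce to connected $G$, transport bigness from $r(\wt{G}(k))$ via Propositions~\ref{rep-bg-4}, \ref{algbig-1}, \ref{elem-1} and \ref{elem-2}), the technical core is genuinely different. The paper's Lemma~\ref{nhyper-1} manufactures a single lattice that is simultaneously $\Gamma$-stable and carries an integral extension of the algebraic representation of $\wt{G}$, using Larsen's boundedness argument over $\ms{O}_K^{\un}$; to make that construction work it must show that $\rho(\Gamma)$ normalizes $\ms{O}_L^{\times} r(\wt{G}(\ms{O}_L))$ for every finite unramified $L/K$, which is exactly what the Bruhat--Tits building Lemma~\ref{nhyper-2} provides. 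You avoid buildings entirely: you note that $\rho(\Gamma)$ normalizes $\pi(\Delta)$ by a purely group-theoretic computation with $\Delta=\tau^{-1}(\sigma(\rho(\Gamma)))$ (correct, though the stability is under the lifted conjugation action of $\rho(\Gamma)$, not of all of $G(K)$ as you wrote), and you compare the two relevant lattices by the standard fact that all stable lattices are homothetic once one reduction is irreducible, together with the prime-to-$\ell$ index bookkeeping through $H^1(K,Z(G^{\scn}))$, which matches the paper's. What each route buys: yours replaces the building argument and Larsen's lattice construction by linear algebra; the paper's single-lattice construction makes the reduction step immediate and sidesteps your one thin spot, namely the claim that $V^{\scn}$ is the generic fibre of an integral Borel--Weil module for $\wt{\mc{G}}$ over $\ms{O}_K$. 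When $\wt{\mc{G}}$ is only quasi-split over $\ms{O}_K$, ``Galois fixes $\lambda$'' is not by itself a proof: you need that the semilinear Galois action on $V^{\scn}\otimes_K L$ preserves the lattice $V_{\ms{O}_L,\lambda}$ (e.g.\ because that lattice is $\wt{\mc{G}}(\ms{O}_L)$-stable with irreducible reduction for large $\ell$, so any Galois translate is a homothety of it and the resulting valuation cocycle $\Gal(L/K)\to\Z$ is trivial), after which the group-scheme homomorphism descends; alternatively you could simply quote \cite[\S 1.12]{Lar} as the paper does. With that detail supplied, your proof is complete.
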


We remark that the second condition in the proposition, that the restriction of $\rho$ to any open subgroup remain
absolutely irreducible, is equivalent to the condition that the representation of $G^{\circ}$ on $V$ be absolutely
irreducible.  We need some auxiliary lemmas to prove the proposition.  We begin with the following one.

\begin{lemma}
\label{nhyper-2}
Let $\wt{G}/\ms{O}_K$ be a simply connected semi-simple group and let $\sigma$ be an automorphism of the generic fiber
$G=\wt{G}_K$ such that $\sigma$ maps $\wt{G}(\ms{O}_K)$ into itself.  Then for any tamely ramified finite extension
$L/K$ the automorphism $\sigma$ maps $\wt{G}(\ms{O}_L)$ into itself.
\end{lemma}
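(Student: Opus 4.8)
The plan is to reduce to a statement about group schemes over $\ms{O}_K$ and then use the structure of tamely ramified extensions together with a fixed-point/descent argument. First I would observe that since $\wt{G}/\ms{O}_K$ is simply connected semi-simple, the subgroup $\wt{G}(\ms{O}_K) \subset G(K)$ is hyperspecial, and more importantly it is the \emph{unique} subgroup of $G(K)$ fixed by the action of $\Gal(L/K)$ arising from the integral model. Concretely, the automorphism $\sigma$ of $G = \wt{G}_K$ extends uniquely to an automorphism of the integral model $\wt{G}/\ms{O}_K$: an automorphism of the generic fiber preserving the hyperspecial subgroup $\wt{G}(\ms{O}_K)$ must preserve the corresponding point of the Bruhat--Tits building, hence corresponds to an automorphism of the smooth affine group scheme $\wt{G}$ itself (this is Bruhat--Tits theory: morphisms of the generic fiber respecting the building extend to the $\ms{O}_K$-models). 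Once $\sigma$ is known to come from an honest automorphism $\wt\sigma$ of the $\ms{O}_K$-group scheme $\wt{G}$, the conclusion is immediate: $\wt\sigma$ induces an automorphism of $\wt{G} \times_{\ms{O}_K} \ms{O}_L$ for \emph{any} extension $L/K$, hence carries $\wt{G}(\ms{O}_L) = (\wt{G}_{\ms{O}_L})(\ms{O}_L)$ into itself, and in particular does so for tamely ramified $L$.

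Alternatively, if one prefers to avoid invoking the extension of $\sigma$ to the integral model directly, I would argue as follows. Let $L/K$ be tamely ramified of degree $e$, with ring of integers $\ms{O}_L$. The key point is that $\wt{G}(\ms{O}_L)$ is the unique hyperspecial subgroup of $G(L)$ that is stable under $\Gal(L/K)$ and whose intersection with $G(K)$ is $\wt{G}(\ms{O}_K)$ — indeed, by Bruhat--Tits theory hyperspecial subgroups of $G(L)$ correspond to hyperspecial points of the building $\mathcal{B}(G, L)$, the Galois-fixed points of $\mathcal{B}(G, L)$ form $\mathcal{B}(G, K)$ (here tameness is used: for a tamely ramified extension the building of $G_L$ has Galois descent, $\mathcal{B}(G,L)^{\Gal(L/K)} = \mathcal{B}(G,K)$), and our hyperspecial point is the one corresponding to $\wt{G}(\ms{O}_K)$. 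Now $\sigma$ acts on $G(L)$ (semilinearly over the induced action on $L$, but since $\sigma$ is defined over $K$ it acts $L$-linearly after base change, commuting with $\Gal(L/K)$), it carries $\mathcal{B}(G,L)$ to itself, it fixes $\mathcal{B}(G,K)$ pointwise in the relevant sense because it preserves $\wt{G}(\ms{O}_K)$, so it fixes the distinguished hyperspecial point, hence preserves $\wt{G}(\ms{O}_L)$.

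The main obstacle I expect is making precise the claim that for tamely ramified $L/K$ the building behaves well under Galois descent, i.e. that $\mathcal{B}(G,L)^{\Gal(L/K)}$ is canonically $\mathcal{B}(G,K)$ and that hyperspecial points descend correctly; this is where tameness is genuinely needed (it fails in the wildly ramified case, because the smoothness of parahoric group schemes after base change can break down, or equivalently $\wt{G}(\ms{O}_L)$ may fail to be the stabilizer of a Galois-fixed vertex). A clean way to sidestep most of the building machinery: show directly that $\sigma$ extends to an automorphism of the $\ms{O}_K$-scheme $\wt{G}$. Since $\wt{G}$ is smooth affine over $\ms{O}_K$ with connected reductive fibers, and $\sigma: \wt{G}_K \to \wt{G}_K$ maps $\wt{G}(\ms{O}_K)$ into itself, one uses that $\wt{G}(\ms{O}_K)$ is Zariski-dense in $\wt{G}$ (as $\ms{O}_K$ is henselian with infinite residue field — wait, the residue field is finite; instead use that $\wt{G}$ is $\ms{O}_K$-smooth so $\wt{G}(\ms{O}_K)$ meets every component, combined with the valuative criterion) to conclude $\sigma$ and $\sigma^{-1}$ send $\wt{G}(\ms{O}_K)$ to itself, then invoke the equivalence between hyperspecial subgroups and $\ms{O}_K$-models (Tits, \emph{Reductive groups over local fields}, \S3.8) to promote $\sigma$ to a morphism of models. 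Then base change to $\ms{O}_L$ finishes the proof with no further hypothesis on $L$ beyond that $\ms{O}_L$ is a local ring — tameness enters only in how this lemma gets \emph{applied} later, where $\wt{G}(\ms{O}_L)$ needs to be hyperspecial in $G(L)$, which requires $\wt{G}_{\ms{O}_L}$ to still be reductive, automatic here since reductivity is preserved by any base change.
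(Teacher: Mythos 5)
Your proposal is correct, and your second (``alternative'') argument is essentially the paper's own proof: $\wt{G}(\ms{O}_K)$ and $\wt{G}(\ms{O}_L)$ are the full stabilizers of unique points $x\in B(G,K)$ and $x'\in B(G,L)$; since $\wt{G}(\ms{O}_L)$ is $\Gal(L/K)$-stable, $x'$ is Galois-fixed, tameness gives $B(G,L)^{\Gal(L/K)}=B(G,K)$ (Tits, \S 2.6.1), and uniqueness forces $x'=x$; as $\sigma$ carries the maximal compact $\wt{G}(\ms{O}_K)$ into itself it carries it onto itself (the image is again maximal compact), so it fixes $x$ and hence preserves the stabilizer of $x'$, which is $\wt{G}(\ms{O}_L)$. (Small slip in your write-up: $\sigma$ fixes the single point $x$, not $B(G,K)$ pointwise.) Your first route --- promote $\sigma$ to an automorphism of the reductive $\ms{O}_K$-model $\wt{G}$ and then base change --- is genuinely different and, if carried out, proves more: the conclusion for \emph{every} finite extension $L/K$, with no tameness hypothesis. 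But its key step is not as soft as your last paragraph suggests: Zariski density of $\wt{G}(\ms{O}_K)$ does not by itself let you extend $\sigma$ over $\ms{O}_K$ (an automorphism of the coordinate ring $K[G]$ need not preserve a given $\ms{O}_K$-Hopf subalgebra merely because it preserves the group of integral points, and the residue field here is finite). The standard justification of that step is precisely the building-theoretic one: $\sigma$ fixes the hyperspecial point $x$, and the Bruhat--Tits group scheme attached to $x$ is canonical --- it is characterized by its generic fiber together with its $\ms{O}_K^{\un}$-points (the \'etoff\'e uniqueness in \cite{BrT}) and then obtained by descent --- so automorphisms of $G$ fixing $x$ extend uniquely to $\wt{G}$. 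Thus your ``clean way to sidestep the building machinery'' in fact runs through it, trading the tame-descent input (Tits \S 2.6.1) for the canonicity/functoriality of the integral model; either input is citable, so both arguments work, with the paper's version staying at the level of fixed points and stabilizers and yours yielding the stronger, tameness-free statement (more than is needed, since the lemma is only applied to unramified $L/K$).
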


\begin{proof}
The group $\wt{G}(\ms{O}_K)$ fixes a point $x$ on the building $B(G, K)$ by \cite[\S 2.3.1]{Tits} or
\cite[\S 4.6.31]{BrT} which is known to be unique.  Similarly, the group $\wt{G}(\ms{O}_L)$ fixes a unique point $x'$ on
the building $B(G, L)$.  Furthermore $\wt{G}(\ms{O}_K)$ (resp.\ $\wt{G}(\ms{O}_L)$) is the full stabilizer of $x$
(resp.\ $x'$) since $\wt{G}(\ms{O}_K)$ (resp.\ $\wt{G}(\ms{O}_L)$) is maximal compact (\cite[\S 3.2]{Tits}).  We now
claim that under the natural inclusion $B(G, K) \to B(G, L)$ the point $x$ is identified with $x'$.  To see this, first
note that if $\tau$ is an element of $\Gal(L/K)$ then $\wt{G}(\ms{O}_L)$ fixes $\tau x'$ and so $\tau x'=x'$ by the
uniqueness of $x'$.  Thus $x'$ is fixed by $\Gal(L/K)$ and therefore belongs to $B(G, K)$ by \cite[\S 2.6.1]{Tits} (this
uses the hypothesis that $L/K$ is tamely ramified).  Since $x'$ is fixed by $\wt{G}(\ms{O}_L)$ it is certainly also
fixed by the subgroup $\wt{G}(\ms{O}_K)$.  By the uniqueness of $x$ we conclude $x=x'$.

Now, the automorphism $\sigma$ of $G$ acts on $B(G, K)$ and $B(G, L)$ and respects the inclusion map.  As $\sigma$
carries $\wt{G}(\ms{O}_K)$ into itself it must fix $x$.  It therefore also fixes $x'$ and so must carry its stabilizer,
$\wt{G}(\ms{O}_L)$, into itself.  This proves the lemma.
\end{proof}

We can now prove the following:

\begin{lemma}
\label{nhyper-1}
Let $\Gamma$ be a profinite group and let $\rho$ be an absolutely irreducible representation of $\Gamma$ on a
$K$-vector space $V$.  Assume that the Zariski closure $G$ of $\rho(\Gamma)$ is connected and that $\rho(\Gamma)$ is a
nearly hyperspecial subgroup of $G(K)$.  Then we can find:
\begin{itemize}
\item a $\Gamma$-stable lattice $\Lambda$ in $V$;
\item a semi-simple group $\wt{G}/\ms{O}_K$ with generic fiber equal to $G^{\scn}$; and
\item a representation $r:\wt{G} \to \GL(\Lambda)$ which induces the natural map $G^{\scn} \to G$ on the generic fiber,
\end{itemize}
such that $\ms{O}_K^{\times} \cdot r(\wt{G}(\ms{O}_K))$ is an open normal subgroup of $\ms{O}_K^{\times} \cdot
\rho(\Gamma)$, the index of which can be bounded in terms of $\dim{V}$.  Necessarily, the generic fiber of $r$ is an
absolutely irreducible representation of $\wt{G}_K$ on $V$.
\end{lemma}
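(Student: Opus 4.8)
The plan is to unwind the definition of ``nearly hyperspecial'' and transport a model from $G^{\scn}$ back to $G$. By hypothesis, $\tau^{-1}(\sigma(\rho(\Gamma)))$ is a hyperspecial subgroup of $G^{\scn}(K)$, so there is a semi-simple group $\wt{G}/\ms{O}_K$ with generic fiber $G^{\scn}$ such that $\wt{G}(\ms{O}_K) = \tau^{-1}(\sigma(\rho(\Gamma)))$. Composing the natural map $\tau: G^{\scn} \to G^{\ad}$ (or rather using $G^{\scn} \to G$) with $\rho$ gives an absolutely irreducible representation of $G^{\scn}$ on $V$; this is absolutely irreducible because the center of $G$ acts by a scalar under $\rho$ (as $V$ is absolutely irreducible for $\Gamma$ hence for $G$), so passing to $G^{\der}$ and then to its universal cover $G^{\scn}$ preserves absolute irreducibility (cf.\ the reductions in the proof of Proposition~\ref{algbig-1}). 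First I would fix any $\Gamma$-stable lattice $\Lambda_0$ in $V$ (which exists by compactness of $\rho(\Gamma)$), and then show that, after adjusting, there is a lattice $\Lambda$ stable under $\wt{G}(\ms{O}_K)$ and such that the $G^{\scn}$-action extends to a representation $r: \wt{G} \to \GL(\Lambda)$ over $\ms{O}_K$.

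The extension-to-a-model step is the heart of the matter. The point is that $\wt{G}$ is a \emph{Chevalley-type} group scheme over $\ms{O}_K$ (split after an unramified extension, since hyperspecial subgroups come from unramified groups), so the category of its representations on finite free $\ms{O}_K$-modules is well-behaved. Concretely, I would use that the generic-fiber representation $V$ of $G^{\scn}_K$ has a highest-weight decomposition, and that Borel--Weil representations $V_{\ms{O}_K, \lambda}$ of $\wt{G}$ (as in the section on Borel-Weil representations) are defined over $\ms{O}_K$ and recover the irreducible constituents of $V$ generically; since $\ell$ is large compared to $n = \dim V$, and hence large compared to $\|V\|$ by Proposition~\ref{rep-bg-4}, Corollary~\ref{rep-bg-7} forces $V_{\ol{k}}$ to be semisimple, so there is no obstruction to gluing these integral models. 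Thus $V$ as a $G^{\scn}_K$-representation extends to a representation $r: \wt{G} \to \GL(\Lambda)$ on a finite free $\ms{O}_K$-module $\Lambda$. By construction $\wt{G}(\ms{O}_K)$ preserves $\Lambda$.

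It remains to compare the groups $\ms{O}_K^{\times} \cdot r(\wt{G}(\ms{O}_K))$ and $\ms{O}_K^{\times} \cdot \rho(\Gamma)$. By the identification $\wt{G}(\ms{O}_K) = \tau^{-1}(\sigma(\rho(\Gamma)))$, the image $r(\wt{G}(\ms{O}_K))$ maps, under the map induced by $G^{\scn} \to G \to \GL(V)$, into $\ol{\rho(\Gamma)}$ up to scalars; more precisely the map $G^{\scn}(K) \to G(K)$ has image of finite index (the cokernel being a subgroup of $H^1(K, Z)$ with $Z = \ker(G^{\scn} \to G)$, a finite group whose order divides that of the center of $G^{\scn}_{\ol{k}}$, hence is bounded in terms of $\dim V$), and its kernel is central, hence lands in $\ms{O}_K^{\times}$ after applying the absolutely irreducible $r$. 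Combining these two finite-index statements — the index of $\tau(\wt{G}(\ms{O}_K))$-related image inside $\sigma(\rho(\Gamma))$ and the index of $\sigma(G^{\scn}(K))$-image — and noting that modding out by $\ms{O}_K^{\times}$ kills the central discrepancies, one gets that $\ms{O}_K^{\times} \cdot r(\wt{G}(\ms{O}_K))$ sits inside $\ms{O}_K^{\times}\cdot\rho(\Gamma)$ with index bounded purely in terms of $\dim V$; normality follows since $\wt{G}(\ms{O}_K) = \tau^{-1}(\sigma(\rho(\Gamma)))$ is normalized by $\rho(\Gamma)$ (conjugation by $\rho(\Gamma)$ induces automorphisms of $G$ fixing $\sigma(\rho(\Gamma))$, hence automorphisms of $G^{\scn}$ fixing the hyperspecial subgroup $\wt{G}(\ms{O}_K)$, which then extend to $\wt{G}$ over $\ms{O}_K$ by a rigidity/uniqueness-of-model argument analogous to Lemma~\ref{nhyper-2}). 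The main obstacle I anticipate is precisely the bookkeeping around the isogeny $G^{\scn} \to G$: ensuring that the ``extra'' scalars and the failure of surjectivity on $K$-points are all absorbed into $\ms{O}_K^{\times}$ and bounded, and that the lattice $\Lambda$ can be chosen so that $r(\wt{G}(\ms{O}_K))$ (and not merely a finite-index subgroup of it) is genuinely contained in $\GL(\Lambda)$.
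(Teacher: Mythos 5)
Your opening moves match the paper: you extract $\wt{G}/\ms{O}_K$ with $\wt{G}(\ms{O}_K)=\tau^{-1}(\sigma(\rho(\Gamma)))$ from the definition of nearly hyperspecial, note that absolute irreducibility passes to $G^{\scn}$, and compare $\ms{O}_K^{\times}\cdot r(\wt{G}(\ms{O}_K))$ with $\ms{O}_K^{\times}\cdot\rho(\Gamma)$ using the bounded-index normal image $\tau(G^{\scn}(K))\subset G^{\ad}(K)$ and absorption of central discrepancies into scalars; that part is essentially the paper's argument. The genuine gap is the lattice. The lemma demands a \emph{single} lattice $\Lambda$ that is simultaneously $\Gamma$-stable and the underlying module of a homomorphism $r:\wt{G}\to\GL(\Lambda)$ of group schemes over $\ms{O}_K$. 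You fix a $\Gamma$-stable lattice $\Lambda_0$ and then separately manufacture a $\wt{G}$-model (via integral Borel--Weil theory), but you never reconcile the two: the ``after adjusting'' is exactly the missing argument, and a Borel--Weil lattice has no reason to be $\rho(\Gamma)$-stable, since $\rho(\Gamma)$ is strictly larger (up to scalars and bounded index) than $r(\wt{G}(\ms{O}_K))$ and elements outside that subgroup can move any given $\wt{G}$-stable lattice. (Two secondary problems with that route: the Borel--Weil construction needs $\wt{G}$ split over the base, which here holds only after an unramified extension, so descent would be required; and your appeal to ``$\ell$ large compared to $n$'' imports a hypothesis that Lemma~\ref{nhyper-1} does not have --- the lemma is stated and used without any largeness assumption on $\ell$.)

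The paper closes precisely this gap, and it is where Lemma~\ref{nhyper-2} is actually needed (you invoke such rigidity only for normality of the subgroup, which already follows from normality of $\tau(G^{\scn}(K))$ in $G^{\ad}(K)$). One first shows that $\rho(\Gamma)$ normalizes $\ms{O}_L^{\times}\cdot r(\wt{G}(\ms{O}_L))$ for every finite unramified extension $L/K$: an element $\gamma\in\rho(\Gamma)$ acts on $G^{\scn}$ through its image in $G^{\ad}(K)$, carries $\wt{G}(\ms{O}_K)$ into itself by the index computation together with maximal compactness, and hence carries $\wt{G}(\ms{O}_L)$ into itself by the building argument of Lemma~\ref{nhyper-2}. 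Then, since $\wt{G}(\ms{O}_K^{\un})$ is bounded, Larsen's argument produces a lattice $\Lambda'$ with $\Lambda'\otimes\ms{O}_K^{\un}$ stable under $\wt{G}(\ms{O}_K^{\un})$, and one sets $\Lambda=\sum_i\gamma_i\Lambda'$ over coset representatives of $\ms{O}_K^{\times}r(\wt{G}(\ms{O}_K))$ in $\ms{O}_K^{\times}\rho(\Gamma)$; the normalization statement is exactly what guarantees that each $\gamma_i\Lambda'$, and hence the sum, is still preserved by $\wt{G}(\ms{O}_K^{\un})$ up to units, after which $r$ lifts to a homomorphism $\wt{G}\to\GL(\Lambda)$ by Larsen's descent argument. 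Without some substitute for this normalization-plus-averaging step, your construction yields either a $\Gamma$-stable lattice or a $\wt{G}$-stable one, but not both, and the statement of the lemma is not reached.
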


\begin{proof}
The group $G$ is a reductive (and in particular connected) group, by hypothesis.  Since $\rho$ is absolutely
irreducible, the center $Z$ of $G$ is contained in the center of $\GL(V)$.  We have maps
\begin{displaymath}
\xymatrix{
G \ar[r]^-{\sigma} & G^{\ad} & G^{\scn} \ar[l]_>>>>>>{\tau} \ar[ld] \\
& G^{\der}. \ar[ul] \ar[u] }
\end{displaymath}
By hypothesis, $\tau^{-1} (\sigma(\rho(\Gamma)))$ is a hyperspecial subgroup of $G^{\scn}$.  Thus we can find a
semi-simple group $\wt{G}/\ms{O}_K$ with generic fiber $G^{\scn}$ such that $\wt{G}(\ms{O}_K)=\tau^{-1}(\sigma(\rho
(\Gamma)))$.

Let $r:G^{\scn} \to G$ be the natural map; this factors through $G^{\der}$ in the above diagram.  Let $U$ be the image
of $G^{\scn}(K)$ under $\tau$.  It is an open normal subgroup of $G^{\ad}(K)$, the index of which can be bounded in
terms of $\dim{G}$ and thus $\dim{V}$ (by arguments similar to those used in the third paragraph of the proof of
Proposition~\ref{algbig-1}).  Now, we have
\begin{displaymath}
\sigma(r(\wt{G}(\ms{O}_K)))=\tau(\wt{G}(\ms{O}_K))=\sigma(\rho(\Gamma)) \cap U.
\end{displaymath}
Applying $\sigma^{-1}$, we find
\begin{displaymath}
K^{\times} \cdot r(\wt{G}(\ms{O}_K)) = K^{\times} \cdot (\rho(\Gamma) \cap \sigma^{-1}(U)).
\end{displaymath}
Since $r(\wt{G}(\ms{O}_K))$ and $\rho(\Gamma) \cap \sigma^{-1}(U)$ are both compact, it follows that
\begin{displaymath}
\ms{O}_K^{\times} \cdot r(\wt{G}(\ms{O}_K)) = \ms{O}_K^{\times} \cdot (\rho(\Gamma) \cap \sigma^{-1}(U)).
\end{displaymath}
Thus $\ms{O}_K^{\times} \cdot r(\wt{G}(\ms{O}_K))$ is an open normal subgroup of $\ms{O}_K^{\times} \cdot \rho(\Gamma)$,
the index of which can be bounded in terms of $\dim{V}$.

We now claim that for any finite unramified extension $L/K$ the group $\rho(\Gamma)$ normalizes $\ms{O}_L^{\times} \cdot
r(\wt{G}(\ms{O}_L))$.  To see this, let $\gamma$ be an element of $\rho(\Gamma)$.  Write $\ol{\gamma}$ for the image of
$\gamma$ in $G^{\ad}(K)$ under $\sigma$.  Thus $\ol{\gamma}$ gives an automorphism of $G^{\scn}$, which we denote by $x
\mapsto \ol{\gamma} x \ol{\gamma}^{-1}$.  Now, let $x$ be an element of $G^{\scn}(L)$.  We then have
\begin{displaymath}
\gamma r(x) \gamma^{-1}=z \cdot r(\ol{\gamma} x \ol{\gamma}^{-1}),
\end{displaymath}
for some $z \in \ms{O}_L^{\times}$, as is easily seen by applying $\sigma$.  It thus suffices to show that conjugation
by $\ol{\gamma}$ carries $\wt{G}(\ms{O}_L)$ into itself.  By Lemma~\ref{nhyper-2} it suffices to show that $\ol{\gamma}$
carries $\wt{G}(\ms{O}_K)$ into itself.  Thus let $x$ be an element of $\wt{G}(\ms{O}_K)$.  Using the above formula and
the fact that $\gamma$ normalizes $\ms{O}_K^ {\times}\cdot r(\wt{G}(\ms{O}_K))$, we can find an element $y$ of
$\wt{G}(\ms{O}_K)$ and an element $z$ of $\ms{O}_K^{\times}$ such that $r(\ol{\gamma} x \ol{\gamma}^{-1})=z r(y)$.  It
thus follows that $\ol{\gamma} x \ol{\gamma}^{-1}=z' y$ for some element $z'$ of the $K$-points of center of $G^{\scn}$.
However, $z'$ must be contained in $\wt{G}(\ms{O}_K)$ since it belongs to a compact central group and $\wt{G}(\ms{O}_K)$
is maximal compact.  Thus $\ol{\gamma} x \ol{\gamma}^{-1}$ belongs to $\wt{G}(\ms{O}_K)$.

Now, the group $\wt{G}(\ms{O}_K^{\un})$ is bounded in the sense of \cite[\S 2.2.1]{Tits}.  Thus, arguing as in
\cite[\S 1.12]{Lar}, we can find a lattice $\Lambda' \subset V$ such that $\Lambda' \otimes \ms{O}_K^{\un}$ is stable
under the action of $\wt{G}(\ms{O}_K^{\un})$.  Now, we have shown that $\ms{O}_K^{\times} \cdot r(\wt{G}(\ms{O}_K))$ has
finite index in $\ms{O}_K^{\times} \cdot \rho(\Gamma)$.  Let $\gamma_1, \ldots, \gamma_n$ be coset representatives and
put
\begin{displaymath}
\Lambda=\sum_{i=1}^n \gamma_i \cdot \Lambda'.
\end{displaymath}
Thus $\Lambda$ is a lattice in $V$.  It is easy to see that $\Gamma$ maps $\Lambda$ into itself and
$\wt{G}(\ms{O}_K^{\un})$ maps $\Lambda \otimes \ms{O}_K^{\un}$ into itself.  Following the argument in
\cite[\S 1.12]{Lar} once again, we see that $r:\wt{G}_K \to \GL(V)$ lifts to a map $\wt{G} \to \GL(\Lambda)$, which
we still call $r$.  This completes the proof of the proposition.
\end{proof}

We can now prove the proposition.

\begin{proof}[Proof of Proposition~\ref{nhyper-5}]
Let $\Gamma$, $\rho$ and $G$ be as in the statement of the proposition, and let $V=K^n$ be the representation space of
$\rho$.  We must show that $\ol{\rho}(\Gamma)$ is
big.  Let $\Gamma^{\circ}=\Gamma \cap G^{\circ}(K)$.  Then $\Gamma^{\circ}$ is a normal subgroup of $\Gamma$ of prime to
$\ell$ index (since the number of components of $G$ is assumed small compared to $\ell$).  It is therefore enough, by
Proposition~\ref{elem-1}, to show that $\ol{\rho}(\Gamma^{\circ})$ is big.  Replacing $\Gamma$ by $\Gamma^{\circ}$, it
thus suffices to prove the proposition under the assumption that the Zariski closure $G$ of $\rho(\Gamma)$ is connected.

Let $\wt{G}/\ms{O}_K$, $\Lambda$ and $r$ be as in Lemma~\ref{nhyper-1}.  Let $\ol{\rho}$ be the representation of
$\Gamma$ on $U=\Lambda \otimes_{\ms{O}_K} k$.  By Proposition~\ref{rep-bg-4}, the representation of $\wt{G}_k$ on $U$
is absolutely irreducible and its norm is bounded in terms of $\dim{V}$.  It thus follows from
Proposition~\ref{algbig-1} that $r(\wt{G}(k))$ is a big subgroup of $\GL(U)$.  Now, $\ms{O}_K^{\times} \rho(\Gamma)$
contains $\ms{O}_K^{\times} r(\wt{G}(\ms{O}_K))$ as a normal subgroup of index prime to $\ell$.  Taking the image of
each group in $\GL(U)$, we find that $k^{\times} \ol{\rho}(\Gamma)$ contains $k^{\times} r(\wt{G}(k))$ as a normal
subgroup of index prime to $\ell$.  (Note that the image of of $r(\wt{G}(\ms{O}_K))$ in $\GL(U)$ is equal to
$r(\wt{G}(k))$ since $\wt{G}$ is smooth over $\ms{O}_K$.)  Since $r(\wt{G}(k))$ is big, we conclude the same for
$\ol{\rho}(\Gamma)$ by Proposition~\ref{elem-1} and Proposition~\ref{elem-2}.
\end{proof}

\section{Groups with Frobenii and compatible systems}
\label{s:compsys}

A \emph{group with Frobenii} is a pair $(\Gamma, \ms{F})$ consisting of a profinite group $\Gamma$ and a dense set of
elements $\ms{F}$ of $\Gamma$ indexed by a set $P$.  The elements of $\ms{F}$ are called ``Frobenius elements.''
The motivating example of a group with Frobenii is the Galois group of a global field.  Let $F$ be a global field (that
is, a finite extension of $\F_p(t)$ or of $\Q$) and let $\Gamma$ be its absolute Galois group.  For each place $v$ of
$F$ choose a Frobenius element $\Frob_v$ and let $\ms{F}$ be the set of all the $\Frob_v$.  Then $(\Gamma, \ms{F})$
is a group with Frobenii.

Let $\Gamma$ be a group with Frobenii, let $E$ be a number field, let $L$ be a set of places of $E$ and for each
$w \in L$ let $\rho_w:\Gamma \to \GL_n(E_w)$ be a continuous representation.  We say that the $\rho_w$ form a
\emph{compatible system} (with coefficients in $E$) if for each Frobenius element $F \in \ms{F}$ there exists a finite
set of places $L_F \subset L$ (the ``bad places'' for $F$) such that the following conditions hold:
\begin{itemize}
\item The characteristic polynomial of $F$ has coefficients in $E$ and is independent of $w$ for good $w$.  Precisely,
given $F \in \ms{F}$ there is a polynomial $p$ with coefficients in $E$ such that for all places $w \in L \setminus
L_F$ the characteristic polynomial of $\rho_w(F)$ is equal to $p$.
\item For any finite subset $L'$ of $L$ the  Frobenii for which all primes in $L'$ are good form a dense set in
$\Gamma$.  That is, for any such $L'$ the set set $\{F \in \ms{F} \mid L' \cap L_F = \emptyset \}$ is dense.
\end{itemize}
By a ``compatible system of semi-simple representations'' we simply mean a compatible system in which each $\rho_w$
is semi-simple.  We call a set $L$ of places of $E$ \emph{full} if there exists a set of rational primes $P$ of
Dirichlet density one such that for all $\ell \in P$ all places of $E$ over $\ell$ belong to $L$.

\begin{proposition}
\label{compsys-1}
Let $\Gamma$ be a group with Frobenii and let $\{\rho_w\}_{w \in L}$ be a compatible system of $n$ dimensional
semi-simple representations of $\Gamma$ with coefficients in $E$, with $L$ a full set of places.  We assume that $E$
is Galois over $\Q$.  Let $G_w$ be the
Zariski closure of $\rho_w(\Gamma)$ in $\GL_n(E_w)$ and let $G_w^{\circ}$ be its identity component.  Then there is a
finite index subgroup $\Gamma^{\circ}$ of $\Gamma$ and a set of primes $P$ of $\Q$ of Dirichlet density $1/[E:\Q]$, all
of which split completely in $E$, such that if $w \in L$ lies over a prime in $P$ then:
\begin{enumerate}
\item The Zariski closure of $\rho_w(\Gamma^{\circ})$ is $G_w^{\circ}$.
\item The group $\rho_w(\Gamma^{\circ})$ is a nearly hyperspecial subgroup of $G_w^{\circ}(E_w)$.
\end{enumerate}
\end{proposition}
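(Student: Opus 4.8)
The plan is to derive the proposition from two fundamental results on compatible systems: a theorem of Serre asserting that the component group $\pi_0(G_w)$, together with the surjection $\Gamma \twoheadrightarrow \pi_0(G_w)$, is independent of $w$, and a theorem of Larsen on the maximality of the $\ell$-adic monodromy in a compatible system. First I would use Serre's theorem to construct $\Gamma^{\circ}$. Since $\rho_w(\Gamma)$ is Zariski dense in $G_w$ and therefore meets every connected component, the composite $\Gamma \to G_w(E_w) \to \pi_0(G_w)$ is surjective; by Serre's theorem its kernel $\rho_w^{-1}(G_w^{\circ}(E_w))$ is a finite-index normal subgroup of $\Gamma$ independent of $w \in L$, which I take to be $\Gamma^{\circ}$. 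Then the Zariski closure $H_w$ of $\rho_w(\Gamma^{\circ})$ is contained in $G_w^{\circ}$; on the other hand, $G_w$ is the Zariski closure of the finite union of cosets $\bigcup_i \gamma_i\,\rho_w(\Gamma^{\circ})$ and hence equals $\bigcup_i \gamma_i H_w$, so $\dim H_w = \dim G_w = \dim G_w^{\circ}$. A closed subgroup of $G_w^{\circ}$ of full dimension contains $G_w^{\circ}$, whence $H_w = G_w^{\circ}$, which is assertion (1).

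For assertion (2) I would bring in the splitting condition. As $E/\Q$ is Galois, the Chebotarev density theorem shows that the rational primes splitting completely in $E$ form a set of Dirichlet density $1/[E:\Q]$, and for any such prime $\ell$ every place $w$ of $E$ above $\ell$ satisfies $E_w = \Q_{\ell}$. I would then apply Larsen's theorem on the maximality of Galois actions to the compatible system $\{\rho_w|_{\Gamma^{\circ}}\}$ --- which has connected algebraic monodromy group $G_w^{\circ}$ by assertion (1), semisimplicity being preserved under restriction to the finite-index subgroup $\Gamma^{\circ}$. The output is a set $P_0$ of rational primes of Dirichlet density one such that, for every $\ell \in P_0$ and every place $w$ of $E$ above $\ell$, the image $\rho_w(\Gamma^{\circ})$ is a nearly hyperspecial subgroup of $G_w^{\circ}(E_w)$: the statement that the image of $\Gamma^{\circ}$ in $G_w^{\ad}$ pulls back along $\tau$ to a hyperspecial subgroup of $G_w^{\scn}(E_w)$ is essentially the content of Larsen's theorem, transcribed into the terminology introduced above. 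Taking $P = P_0 \cap \{\ell : \ell \text{ splits completely in } E\}$ gives a set of Dirichlet density $1/[E:\Q]$, every prime of which splits completely in $E$, and both conclusions then hold for every $w$ lying over a prime of $P$.

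The main obstacle will be the precise invocation of Larsen's theorem: one must check that its hypotheses hold in the present generality --- semisimplicity (given), connectedness of the algebraic monodromy group (arranged by passing to $\Gamma^{\circ}$), and the compatible-system axioms for an abstract group with Frobenii rather than an honest Galois group --- and, more delicately, one must extract from its conclusion exactly the \emph{nearly hyperspecial} property of $\rho_w(\Gamma^{\circ})$ inside $G_w^{\circ}(E_w)$, rather than genuine hyperspeciality, since only the image of $\Gamma^{\circ}$ in the adjoint group, pulled back to the simply connected cover, is forced to be hyperspecial. A subsidiary point requiring care is the uniformity of the conclusion over all places $w$ above a given $\ell \in P_0$: restricting at the outset to primes that split completely in $E$, so that $E_w = \Q_{\ell}$ and the representations $\rho_w$ with $w \mid \ell$ are permuted by $\Gal(E/\Q)$, is what reduces the statement to the $\Q_{\ell}$-coefficient situation in which Larsen's theorem is usually phrased.
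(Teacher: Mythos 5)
Your overall architecture (Serre for the component group and part (1), Larsen for part (2), Chebotarev for the density count) is the same as the paper's, but there is a genuine gap at the decisive step: you never legitimately reduce the $E$-coefficient system to the setting in which the theorems of Serre and Larsen are available, and the paper itself only invokes them in the $E=\Q$ case. Your proposed reduction --- restrict to primes $\ell$ splitting completely in $E$, so that $E_w=\Q_{\ell}$, and then apply Larsen's theorem to $\{\rho_w\vert_{\Gamma^{\circ}}\}$ --- does not work as stated: even at a split prime the Frobenius characteristic polynomials lie in $E[t]$ (read inside $\Q_{\ell}[t]$ via the embedding determined by $w$), and as $\ell$ varies they are not $\Q$-rational nor independent of $\ell$ as polynomials over $\Q$, so the family obtained by choosing one $w$ above each split $\ell$ is \emph{not} a compatible system with $\Q$-coefficients, which is what the quoted results require. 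The same objection applies to your use of Serre's $\pi_0$-independence to define $\Gamma^{\circ}$ for the $E$-rational system. There is also a uniformity problem: you need conclusion (2) simultaneously for all $[E:\Q]$ places $w$ above a given $\ell\in P$, and your remark that the $\rho_w$ with $w\mid\ell$ are permuted by $\Gal(E/\Q)$ is not one of the compatibility axioms (they all have the \emph{same} $E$-rational Frobenius polynomials; nothing makes them Galois conjugates), so running Larsen place-by-place does not obviously produce a single density-one set.

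The paper's proof supplies exactly the missing device. For each $\ell$ with all places above it in $L$ one forms $\sigma_{\ell}=\bigoplus_{w\mid\ell}\rho_w$, viewed as a $\Q_{\ell}$-representation of dimension $n[E:\Q]$; its Frobenius characteristic polynomials are $\Q$-rational and independent of $\ell$, so $\{\sigma_{\ell}\}$ is an honest compatible system with $\Q$-coefficients. Applying the $E=\Q$ case (Serre and Larsen) to this system yields $\Gamma^{\circ}$ and a density-one set $P_1$ on which $\sigma_{\ell}(\Gamma^{\circ})$ is Zariski dense in $H_{\ell}^{\circ}$ and nearly hyperspecial in $H_{\ell}^{\circ}(\Q_{\ell})$. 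For $\ell\in P_1$ split in $E$ and any $w\mid\ell$, the representation $\rho_w$ is a $\Q_{\ell}$-summand of $\sigma_{\ell}$, so $H_{\ell}^{\circ}$ surjects onto the Zariski closure of $\rho_w(\Gamma^{\circ})$; connectedness of that closure gives (1) (your coset/dimension argument then being unnecessary), and (2) follows from a lemma the paper proves separately: the image of a nearly hyperspecial subgroup under a surjection of reductive groups is nearly hyperspecial, established by writing $G^{\scn}$ as a product with $H^{\scn}$ as a factor and using that a hyperspecial subgroup of a product splits as a product of hyperspecial subgroups. To salvage your outline you would have to either prove Serre's and Larsen's theorems directly for $E$-rational systems or insert this direct-sum reduction; as written, the appeal to Larsen ``transcribed into the terminology'' is precisely the unproved heart of the matter.
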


\begin{proof}\ \hskip -8pt{}\footnote{An argument similar to the one given here appeared earlier in \cite{BLGGT}.}
When $E=\Q$, the first statement is due to Serre (see \cite[Prop.~6.14]{LP}) and the second to Larsen (see \cite{Lar}).
We will deduce the statement for arbitrary $E$ from the $E=\Q$ case.
Let $P_0$ be the set of rational primes $\ell$ such that all places of $E$ above $\ell$ belong to $L$.  Then $P_0$ has
Dirichlet density one since $L$ is full.  For $\ell \in P$ define $\sigma_{\ell}=\bigoplus_{w \mid \ell} \rho_w$, where
here $\rho_w$ is regarded as a $\Q_{\ell}$ representation of dimension $n [E_w:\Q_{\ell}]$.  Then $\sigma_{\ell}$ is a
$\Q_{\ell}$ representation of $\Gamma$ of dimension $nm$, where $m=[E:\Q]$.  One easily sees that
$\{\sigma_{\ell}\}_{\ell \in P}$ forms a compatible system.

Let $H_{\ell}$ be the Zariski closure of the image of $\sigma_{\ell}$.  Applying the $E=\Q$ case of the proposition,
we can find a set of primes $P_1 \subset P_0$ of Dirichlet density one and a finite index subgroup $\Gamma^{\circ}$ of
$\Gamma$ such that for all $\ell \in P$ we have:  (1) the Zariski closure of $\sigma_{\ell}(\Gamma^{\circ})$ is
$H_{\ell}^{\circ}$; and (2) $\sigma_{\ell}(\Gamma^{\circ})$ is a nearly hyperspecial subgroup of $H_{\ell}^{\circ}
(\Q_{\ell})$.  Let $P$ be the set of primes in $P_1$ which split completely in $E$.  Let $\ell \in P$ and pick
$w \mid \ell$.  Since $E_w=\Q_{\ell}$, the representation $\rho_w$ is an $n$-dimensional $\Q_{\ell}$ representation,
and as such a summand of $\sigma_{\ell}$.  Thus
$\sigma_{\ell}(\Gamma^{\circ})$ surjects onto $\rho_w(\Gamma^{\circ})$, and so $H_{\ell}^{\circ}$ surjects onto the
Zariski closure of $\rho_w(\Gamma^{\circ})$.  It follows that the Zariski closure of $\rho_w(\Gamma^{\circ})$ is
connected.  Since $\rho_w(\Gamma^{\circ})$ has finite index in $\rho_w(\Gamma)$, the Zariski closure of the former
must be the connected component of the Zariski closure of the latter, namely $G_w^{\circ}$.  The following lemma
shows that $\rho_w(\Gamma^{\circ})$ is nearly hyperspecial in $G_w^{\circ}(\Q_{\ell})$.
\end{proof}

\begin{lemma}
Let $K/\Q_{\ell}$ be a finite extension, let $f:G \to H$ be a surjection of reductive groups over $K$ and let
$\Gamma$ be a nearly hyperspecial subgroup of $G(K)$.  Then $f(\Gamma)$ is a nearly hyperspecial subgroup of $H(K)$.
\end{lemma}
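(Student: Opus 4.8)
The strategy is to reduce to the fact, recalled in this section, that a hyperspecial subgroup is a maximal compact subgroup, after observing that a surjection of reductive groups is a projection onto a direct factor on the level of universal covers and of adjoint groups.

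First I would record the structure of $f$. A surjection $f\colon G\to H$ carries $G^{\der}$ onto $H^{\der}$, and since $G^{\scn}$ is simply connected the composite $G^{\scn}\to G^{\der}\to H^{\der}$ lifts to a map $f^{\scn}\colon G^{\scn}\to H^{\scn}$. Writing $G^{\scn}=\prod_i G_i$ as a product of simple simply connected groups, the kernel of $f^{\scn}$ is a normal subgroup of $G^{\scn}$ whose quotient is the simply connected group $H^{\scn}$, and such a subgroup is necessarily one of the subproducts $\prod_{i\in I}G_i$. Thus $G^{\scn}=H^{\scn}\times S$ with $S=\prod_{i\in I}G_i$ and $f^{\scn}$ the projection onto the first factor; passing to adjoint quotients gives $G^{\ad}=H^{\ad}\times S^{\ad}$, $\tau_G=\tau_H\times\tau_S$, and the induced map $f^{\ad}\colon G^{\ad}\to H^{\ad}$ is again the first projection. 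One has $\sigma_H\circ f=f^{\ad}\circ\sigma_G$, and the square relating $\tau_G$, $\tau_H$, $f^{\scn}$, $f^{\ad}$ commutes.

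Now set $\Delta_G=\tau_G^{-1}(\sigma_G(\Gamma))$, which is hyperspecial in $G^{\scn}(K)$ by hypothesis, and $\Delta_H=\tau_H^{-1}(\sigma_H(f(\Gamma)))=\tau_H^{-1}(f^{\ad}(\sigma_G(\Gamma)))$; I must show $\Delta_H$ is hyperspecial. Two assertions suffice. First, $f^{\scn}(\Delta_G)\subseteq\Delta_H$, an immediate chase through the commuting square, and $f^{\scn}(\Delta_G)$ is a hyperspecial subgroup of $H^{\scn}(K)$ because $f^{\scn}$ is the projection onto a direct factor: the building of $G^{\scn}$ is the product of that of $H^{\scn}$ with that of $S$, a point of this product is hyperspecial exactly when both of its components are, and $f^{\scn}$ sends the stabilizer of a point to the stabilizer of its first component. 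Second, $\Delta_H$ is bounded. Indeed $\Delta_G$ is compact, so $\tau_G(\Delta_G)=\sigma_G(\Gamma)\cap\tau_G(G^{\scn}(K))$ is bounded; it has finite index in $\sigma_G(\Gamma)$ since the index of $\tau_G(G^{\scn}(K))$ in $G^{\ad}(K)$ injects into the finite group $H^1(K,\ker\tau_G)$; hence $\sigma_G(\Gamma)$ is bounded, $f^{\ad}(\sigma_G(\Gamma))$ is bounded in $H^{\ad}(K)$, and its preimage $\Delta_H$ under the finite morphism $\tau_H$ is bounded in $H^{\scn}(K)$.

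Combining the two assertions, $\Delta_H$ is a bounded, hence relatively compact, subgroup of $H^{\scn}(K)$ whose closure contains the hyperspecial --- and therefore maximal compact --- subgroup $f^{\scn}(\Delta_G)$. It follows that $\overline{\Delta_H}=f^{\scn}(\Delta_G)$, so $\Delta_H=f^{\scn}(\Delta_G)$ is hyperspecial, as desired. The part that needs care is precisely the combination of boundedness with maximality: one cannot compute $\Delta_H$ directly as $f^{\scn}(\Delta_G)$ on the nose, because the fibres of $\sigma_G(\Gamma)\subseteq H^{\ad}(K)\times S^{\ad}(K)$ over $H^{\ad}(K)$ need not be supported on the image of $\tau_S$, so without the maximal-compactness input the inclusion $f^{\scn}(\Delta_G)\subseteq\Delta_H$ could a priori be strict.
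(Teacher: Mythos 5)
Your argument is correct, and its skeleton is the paper's: establish the inclusion $f^{\scn}(\Delta)\subseteq\Delta'$ (your $f^{\scn}(\Delta_G)\subseteq\Delta_H$), establish that $\Delta'$ is compact (or at least bounded), show that $f^{\scn}(\Delta)$ is hyperspecial and hence maximal compact, and conclude equality. The genuine difference lies in how you prove $f^{\scn}(\Delta)$ is hyperspecial. The paper writes $G^{\scn}=H^{\scn}\times H'$ and invokes a separate companion lemma --- a hyperspecial subgroup of a product of reductive groups over $K$ is a product of hyperspecial subgroups --- proved scheme-theoretically: one passes to an \'etale cover splitting the reductive $\ms{O}_K$-model, observes that its root datum is a product, decomposes the model accordingly, and descends. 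You instead work in the Bruhat--Tits building: $\Delta_G$ is the full stabilizer of its hyperspecial fixed point, the building of $H^{\scn}\times S$ is the product of the buildings with hyperspecial points exactly the pairs of hyperspecial points, stabilizers of points decompose as products, and so the image under the first projection is the stabilizer of a hyperspecial point of $B(H^{\scn},K)$, i.e.\ hyperspecial. This is a legitimate alternative resting only on facts from Tits' article that the paper already uses (unique fixed points in the building, full stabilizers for simply connected groups); the paper's route produces the product-decomposition statement as a reusable lemma, while yours stays entirely inside building language and avoids integral models. You also supply details the paper merely asserts: that the lifted map is a projection onto a direct factor (via the structure of normal subgroups of a product of simple simply connected groups with simply connected quotient), and that $\Delta_H$ is bounded (finiteness of $H^1(K,\ker\tau_G)$ gives boundedness of $\sigma_G(\Gamma)$, and the isogeny $\tau_H$ is proper on $K$-points), with the passage to the closure correctly handling the fact that $\Delta_H$ is not obviously closed. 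I see no gap.
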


\begin{proof}
Consider the diagram
\begin{displaymath}
\xymatrix{
G^{\scn} \ar[r]^{\tau} \ar[d]^{f''} & G^{\ad} \ar[d]^{f'} & G \ar[l]_{\sigma} \ar[d]^f \\
H^{\scn} \ar[r]^{\tau'} & H^{\ad} & H \ar[l]_{\sigma'} }
\end{displaymath}
where $f''$ is the lift of $f'$.  Let $\Gamma'=f(\Gamma)$, $\Delta=\tau^{-1}(\sigma(\Gamma))$ and $\Delta'=
(\tau')^{-1}(\sigma'(\Gamma'))$.  We are given that $\Delta$ is hyperspecial and we want to show that $\Delta'$ is
hyperspecial.  One easily sees that $f''(\Delta) \subset \Delta'$ and that $\Delta'$ is compact.  Now, since $G^{\scn}$
and $H^{\scn}$ are simply connected semi-simple groups the map $f''$ is a projection onto a direct factor.  It follows
that $G^{\scn}=H^{\scn} \times H'$ for some group $H'$.  The following lemma shows that $\Delta=\Delta_1 \times
\Delta_2$ where $\Delta_1$ is a hyperspecial subgroup of $H^{\scn}(K)$ and $\Delta_2$ is a hyperspecial subgroup of
$H'(K)$.  We thus find $f''(\Delta) = \Delta_1 \subset \Delta'$.  Since $\Delta'$ is compact and $\Delta_1$ is maximal
compact, we have $\Delta'=\Delta_1$ and so $\Delta'$ is hyperspecial.
\end{proof}

\begin{lemma}
Let $K/\Q_{\ell}$ be a finite extension, let $H_1$ and $H_2$ be reductive groups over $K$ and let $\Delta$ be a
hyperspecial subgroup of $H_1(K) \times H_2(K)$.  Then $\Delta=\Delta_1 \times \Delta_2$ where $\Delta_i$ is a
hyperspecial subgroup of $H_i(K)$.
\end{lemma}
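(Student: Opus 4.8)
The plan is to reduce the statement to one about reductive group schemes over $\ms{O}_K$. Since $\Delta$ is hyperspecial, fix a reductive group scheme $\wt{G}/\ms{O}_K$ with generic fiber $\wt{G}_K=H_1\times H_2$ and $\Delta=\wt{G}(\ms{O}_K)$. It then suffices to prove that $\wt{G}$ decomposes as a product $\wt{G}_1\times\wt{G}_2$ of reductive group schemes over $\ms{O}_K$ with $(\wt{G}_i)_K=H_i$ (compatibly with the given factorization of the generic fiber): indeed, granting this, $\Delta=\wt{G}(\ms{O}_K)=\wt{G}_1(\ms{O}_K)\times\wt{G}_2(\ms{O}_K)$ inside $H_1(K)\times H_2(K)$, and each $\Delta_i:=\wt{G}_i(\ms{O}_K)$ is by definition a hyperspecial subgroup of $H_i(K)$, which is exactly the assertion.

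To split $\wt{G}$ I would treat its radical and its derived subgroup separately and then reassemble. The radical $\wt{R}$ of $\wt{G}$ (its maximal central subtorus) is an $\ms{O}_K$-torus whose generic fiber is the radical of $H_1\times H_2$, namely $R(H_1)\times R(H_2)$; since an unramified torus over $K$ has a unique $\ms{O}_K$-model, $\wt{R}=\wt{R}_1\times\wt{R}_2$ with $(\wt{R}_i)_K=R(H_i)$. The derived subgroup $\wt{G}^{\der}$ is a semisimple $\ms{O}_K$-group scheme, so by the structure theory of semisimple group schemes \cite[Exp.~XXIV]{SGA} it is a product of $\ms{O}_K$-simple factors, each a Weil restriction of an absolutely simple group along a finite unramified cover; the generic fibers of these factors are the (unique) $K$-simple factors of $H_1^{\der}\times H_2^{\der}$, and since $H_i^{\der}$ is the $i$-th factor of this $K$-group, each $\ms{O}_K$-simple factor has generic fiber inside a single $H_i^{\der}$. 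Collecting them gives $\wt{G}^{\der}=\wt{D}_1\times\wt{D}_2$ with $(\wt{D}_i)_K=H_i^{\der}$. Finally, $\wt{G}$ is the fppf quotient of $\wt{R}\times\wt{G}^{\der}$ by the antidiagonal copy $\delta$ of the finite multiplicative-type group $\wt{R}\cap\wt{G}^{\der}$; this $\delta$ is the scheme-theoretic closure in $\wt{R}\times\wt{G}^{\der}$ of its generic fiber, and because $H_1\times H_2$ is an honest direct product the generic intersection is $(R(H_1)\cap H_1^{\der})\times(R(H_2)\cap H_2^{\der})$, so the closure splits as $\delta=\delta_1\times\delta_2$ with $\delta_i\subset\wt{R}_i\times\wt{D}_i$. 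Quotienting $\wt{R}\times\wt{G}^{\der}=\prod_i(\wt{R}_i\times\wt{D}_i)$ by $\delta_1\times\delta_2$ exhibits $\wt{G}=\wt{G}_1\times\wt{G}_2$ with $\wt{G}_i=(\wt{R}_i\times\wt{D}_i)/\delta_i$ reductive over $\ms{O}_K$ and $(\wt{G}_i)_K=(R(H_i)\times H_i^{\der})/\delta_{i,K}=H_i$, as required.

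The step I expect to require the most care is the \emph{integrality} of these splittings: that the $\ms{O}_K$-simple factors of $\wt{G}^{\der}$ do not straddle both generic factors, and that forming scheme-theoretic closures over the discrete valuation ring $\ms{O}_K$ commutes with forming products. Both reduce to the fact that $H_1\times H_2$ is a genuine direct product, so every canonical subgroup attached to it — radical, derived subgroup, their intersection, the simple factors — is visibly a product of subgroups of the individual $H_i$, together with uniqueness of unramified integral torus models. For the use made of this lemma in the preceding argument only the case where $H_1$ and $H_2$ are simply connected semisimple is needed, and there one can bypass \cite{SGA} entirely: the Bruhat--Tits building of $H_1\times H_2$ is the product of the buildings of $H_1$ and $H_2$, a hyperspecial subgroup is the unique stabilizer of a hyperspecial point, such a point is a pair of hyperspecial points, and the stabilizer of each is a hyperspecial subgroup of the corresponding $H_i(K)$.
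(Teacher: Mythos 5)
Your overall reduction (realize $\Delta=\wt{G}(\ms{O}_K)$ and split the integral model $\wt{G}$ as $\wt{G}_1\times\wt{G}_2$) is the same as the paper's, but your route to the splitting contains one genuine misstep: the assertion that the semisimple $\ms{O}_K$-group $\wt{G}^{\der}$ is a \emph{direct product} of $\ms{O}_K$-simple factors, each a Weil restriction of an absolutely simple group. That structure theorem is valid for simply connected or adjoint semisimple group schemes; a general semisimple group is only an almost-direct product of its simple factors, as the example $(\SL_2\times\SL_2)/\mu_2$ (antidiagonal $\mu_2$) already shows over a field, so the step as written is unjustified for $\wt{G}^{\der}$. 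The conclusion you want is nevertheless true, and your own toolkit repairs it: pass to the simply connected cover $\wt{G}^{\scn}\to\wt{G}^{\der}$, where the decomposition into simple factors does hold and each factor's generic fiber lies in exactly one $H_i^{\scn}$ (the simply connected cover of a product is the product of the covers); the central kernel is flat, hence the schematic closure of its generic fiber, which is a product, so the quotient $\wt{G}^{\der}$ splits as well --- the same flat-closure argument you use to reassemble $\wt{G}$ from radical and derived subgroup. With that correction your argument goes through: the radical step (uniqueness of $\ms{O}_K$-models of unramified tori) and the reassembly modulo the antidiagonal multiplicative-type kernel are fine, the key point being that over a discrete valuation ring a flat closed subscheme is determined by its generic fiber, so closures of products are products of closures.

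For comparison, the paper's proof is shorter and avoids dissecting $\wt{G}$ at all: if the factors $\wt{G}_i$ exist they are necessarily the Zariski closures of $H_i$ in $\wt{G}$, hence unique, so their existence can be checked \'etale-locally on $\ms{O}_K$ by descent; after an \'etale cover one may assume $\wt{G}$ split, and then the root datum of $(\wt{G},\wt{T})$ is canonically identified with that of the generic fiber, which is a product of root data, so $\wt{G}$ is a product by the isomorphism theorem for split reductive group schemes. Your closing remark is also correct: for the application in the preceding lemma only simply connected semisimple $H_1$, $H_2$ occur, and there the Bruhat--Tits argument you sketch (the building of the product is the product of the buildings, hyperspecial points are pairs of hyperspecial points, and stabilizers split accordingly) gives a valid, more elementary proof of the special case actually needed.
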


\begin{proof}
We thank Brian Conrad for this argument.
Let $\Delta=\wt{G}(\ms{O}_K)$ where $\wt{G}/\ms{O}_K$ is a reductive group with generic fiber $G$.  We wish to find
$\wt{G}_i$ such that $\Delta_i=\wt{G}_i(\ms{O}_K)$.  If $\wt{G}_i$ exists then it is necessarily the Zariski closure
of $G_i$ in $\wt{G}$ and thus unique.  To establish the existence of $\wt{G}_i$ we may therefore (by descent theory)
work \'etale locally
on $\ms{O}_K$.  We may therefore replace $\ms{O}_K$ by a cover and assume that $\wt{G}$ is split.  Let $\wt{T}$ be
a split maximal torus of $\wt{G}$.  Then the root datum for $(\wt{G}, \wt{T})$ is canonically identified with that
for $(G, T)$, where $T$ is the generic fiber of $\wt{T}$.  As the latter is a product, so is the former.  Thus
$\wt{G}=\wt{G}_1 \times \wt{G}_2$ where the generic fiber of $\wt{G}_i$ is $G_i$.  This establishes the lemma.
\end{proof}

\section{Bigness for compatible systems}
\label{s:bigsys}

We can now prove our main theorem:

\begin{theorem}
\label{csys-1}
Let $\Gamma$ be a group with Frobenii, let $E$ be a Galois extension of $\Q$, let $L$ be a full set of places of $E$
and for each
$w \in L$ let $\rho_w:\Gamma \to \GL_n(E_w)$ be a continuous representation.  Assume that $\{\rho_w\}_{w \in L}$ forms
a compatible system and that each $\rho_w$ is absolutely irreducible when restricted to any open subgroup of $\Gamma$.
Then there is a set of primes $P$ of $\Q$ of Dirichlet density $1/[E:\Q]$, all of which split completely in $E$, such
that $\ol{\rho}_w(\Gamma)$ is a big subgroup of $\GL_n(\F_{\ell})$ for any $w \in L$ lying over a prime $\ell \in P$.
\end{theorem}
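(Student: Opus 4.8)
The plan is to apply Proposition~\ref{nhyper-5} to each individual $\rho_w$, using Proposition~\ref{compsys-1} to supply the input on Zariski closures and hyperspeciality. Since each $\rho_w$ is absolutely irreducible (take the open subgroup in the hypothesis to be $\Gamma$ itself), it is semi-simple, so $\{\rho_w\}_{w\in L}$ is a compatible system of semi-simple representations and Proposition~\ref{compsys-1} applies: it yields a finite-index subgroup $\Gamma^\circ\subseteq\Gamma$ and a set $P_0$ of primes of $\Q$ of Dirichlet density $1/[E:\Q]$, all splitting completely in $E$, such that for every $w\in L$ over a prime of $P_0$ the Zariski closure of $\rho_w(\Gamma^\circ)$ is $G_w^\circ$ and $\rho_w(\Gamma^\circ)$ is nearly hyperspecial in $G_w^\circ(E_w)$.

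Fix such a $w$, write $G_w$ for the Zariski closure of $\rho_w(\Gamma)$, and put $d=[\Gamma:\Gamma^\circ]$; I would then check the four hypotheses of Proposition~\ref{nhyper-5} for $\rho_w\colon\Gamma\to\GL_n(E_w)$. Because $\ell$ splits completely in $E$ we have $E_w=\Q_{\ell}$, so the residue field is $\F_{\ell}$, and after discarding the finitely many small primes from $P_0$ we may take $\ell$ as large as we wish compared to $n$ and to $d$. Writing $\rho_w(\Gamma)$ as a union of at most $d$ cosets of $\rho_w(\Gamma^\circ)$ and taking Zariski closures shows that $G_w$ is a union of at most $d$ translates of the connected group $G_w^\circ$, so $[G_w:G_w^\circ]\le d$ is small compared to $\ell$. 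Absolute irreducibility of $\rho_w$ on every open subgroup is a hypothesis of the theorem. The remaining point, and the only one requiring real care, is that $\rho_w(\Gamma)\cap G_w^\circ(E_w)$ is nearly hyperspecial in $G_w^\circ(E_w)$: it is a compact subgroup of $G_w^\circ(E_w)$ containing $\rho_w(\Gamma^\circ)$, so applying first $\sigma\colon G_w^\circ\to(G_w^\circ)^{\ad}$ and then $\tau^{-1}$, where $\tau\colon(G_w^\circ)^{\scn}\to(G_w^\circ)^{\ad}$, produces a compact subgroup of $(G_w^\circ)^{\scn}(E_w)$ containing the hyperspecial, hence maximal compact, subgroup $\tau^{-1}(\sigma(\rho_w(\Gamma^\circ)))$; by maximality the two coincide, so $\tau^{-1}(\sigma(\rho_w(\Gamma)\cap G_w^\circ(E_w)))$ is hyperspecial. (Here one uses that $\tau$ is a central isogeny, so preimages under $\tau$ of compact subsets of $(G_w^\circ)^{\ad}(E_w)$ are compact.)

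Granting these verifications, Proposition~\ref{nhyper-5} gives that $\ol\rho_w(\Gamma)$ is a big subgroup of $\GL_n(\F_{\ell})$ for every $w$ lying over a prime in the set $P$ obtained from $P_0$ by removing the finitely many primes that are too small for the above bounds; since $P_0\setminus P$ is finite, $P$ still has density $1/[E:\Q]$, completing the proof. I expect the near-hyperspeciality of the possibly larger group $\rho_w(\Gamma)\cap G_w^\circ(E_w)$ to be the main obstacle, the rest being routine bookkeeping with group indices and Dirichlet densities.
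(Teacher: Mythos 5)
Your proof is correct and follows essentially the same route as the paper: invoke Proposition~\ref{compsys-1} to obtain $\Gamma^{\circ}$ and the density-$1/[E:\Q]$ set $P_0$, bound $[G_w:G_w^{\circ}]$ uniformly by $[\Gamma:\Gamma^{\circ}]$, and apply Proposition~\ref{nhyper-5} after discarding the finitely many primes of $P_0$ that are too small. Your maximal-compactness argument showing that the possibly larger group $\rho_w(\Gamma)\cap G_w^{\circ}(E_w)$, and not merely $\rho_w(\Gamma^{\circ})$, is nearly hyperspecial correctly fills in a detail that the paper's one-line deduction leaves implicit.
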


\begin{proof}
Let $G_w$ be the Zariski closure of $\rho_w(\Gamma)$ in $\GL_n(E_w)$.  Let $P_0$ be the set of primes provided by
Proposition~\ref{compsys-1}.  Then as $w$ varies amongst places of $L$ lying over elements of $P_0$ the index of
$G_w^{\circ}$ in $G_w$ is bounded.  Thus by Proposition~\ref{nhyper-5}, $\ol{\rho}_w(\Gamma)$ is a big subgroup of
$\GL_n(\F_{\ell})$ if $w \in L$ lies over $\ell \in P_0$ and $\ell$ is sufficiently large.  It follows that we can
take $P$ to be the set of all sufficiently large elements of $P_0$.
\end{proof}

We expect that one should be able to take the set $P$ of primes in the above theorem to have density one, but we have
not proved this.  Applying the theorem in the case where $\Gamma$ is the absolute Galois group of a number field and
$E=\Q$ gives Theorem~\ref{mainthm} from the introduction.

\end{document}